\newtheorem{thm}{Theorem}[section]
\newtheorem{theorem}[thm]{Theorem}
\newtheorem{lemma}[thm]{Lemma}
\newtheorem{corollary}[thm]{Corollary}
\newtheorem{proposition}[thm]{Proposition}
\newcommand{\cA}{\mathcal{A}}
\newcommand{\cE}{\mathcal{E}}
\newcommand{\cO}{\mathcal{O}}
\newcommand{\cX}{\mathcal{X}}
\newcommand{\cW}{\mathcal{W}}
\newcommand{\cZ}{\mathcal{Z}}
\newcommand{\cY}{\mathcal{Y}}
\newcommand{\cJ}{\mathcal{J}}
\newcommand{\rH}{\mathrm H}
\newcommand{\p}{\mathfrak p}
\newcommand{\m}{\mathfrak m}
\newcommand{\D}{\mathfrak D}
\newcommand{\car}{\mathrm{char}}
\newcommand{\expo}{\varepsilon} 
\newcommand{\who}{\widehat{\cO}} 
\newcommand{\wt}{\widetilde} 
\newcommand{\Gal}{\mathop{\rm Gal}}
\newcommand{\Spec}{\mathop{\rm Spec}}
\newcommand{\Proj}{\mathop{\rm Proj}}
\newcommand{\Ann}{\mathop{\rm Ann}}
\newcommand{\Img}{\mathop{\rm Im}} 
\newcommand{\coker}{\mathop{\rm coker}} 
\begin{document}

\title{Congruences of models of elliptic curves}
\author{Qing Liu, Huajun Lu}

\classno{11G07 (primary), 14G20, 14G40, 11G20 (secondary)}

\maketitle

\begin{abstract}
Let $\cO_K$ be a discrete valuation ring with field of fractions $K$
and perfect residue field. 
Let $E$ be an elliptic curve over $K$, let $L/K$ be a finite Galois 
extension and let $\cO_L$ be the integral closure of $\cO_K$ in $L$. 
Denote by $\mathcal X'$ the minimal regular model of 
$E_L$ over $\cO_L$. We show that the special fibers 
of the minimal Weierstrass model and the minimal regular model of 
$E$ over $\cO_K$ are determined by the infinitesimal fiber 
$\mathcal X'_m$ together with the action of $\Gal(L/K)$, when 
$m$ is big enough (depending on the minimal discriminant of $E$ and
the different of $L/K$). 
\end{abstract} 

\maketitle 

\begin{section}{Introduction}

Let $\cO_K$ be a discrete valuation ring with field of fractions $K$
and perfect residue field. 
Let $E$ be an elliptic curve over $K$. The minimal (projective) regular 
model $\cX$ of $E$ over $\cO_K$ encodes interesting 
arithmetical invariants of $E$ (e.g. the conductor of $E$, and the
smooth locus of $\cX$ is the N\'eron model of $E$). It is 
then important to be able to determine this model. 
Let $L/K$ be a finite Galois extension with Galois group $G$ and
let $\cO_L$ be the integral closure of $\cO_K$ in $L$ (this is
a semilocal Dedekind domain). 
Let $\cX'$ be the minimal regular model of $E_L$ over 
$\cO_L$. By the uniqueness of minimal regular models, $G$ acts on 
the $\cO_{K}$-scheme $\cX'$. It is well known that there exists $L$ 
as above such that $\cX'$ is semi-stable. When 
$L/K$ is moreover tamely ramified and $K$ is complete, Viehweg \cite{Vieh} 
(for curves of any genus $\ge 1$) showed that the type of the 
special fiber $\cX_0$ of $\cX$ is determined by the action of $G$ on the
special fiber of $\cX'$. 

In the present work, we consider 
wildly ramified extensions $L/K$ for elliptic curves. We will 
\emph{not suppose} $E_L$ has semi-stable reduction, even though 
this is probably the most interesting situation. 
For any $\cO_{K}$-scheme $\cZ$ and for any integer $N\ge 0$, 
we will denote as usual 
\[\cZ_N:=\cZ\times_{\Spec \cO_{K}} \Spec(\cO_{K}/\pi^{N+1}\cO_K)\]
where $\pi$ is a uniformizing element of $\cO_K$. For any 
$\cO_L$-scheme $\cZ'$, the infinitesimal fiber $\cZ'_N$
is by definition 
\[\cZ'_N:=\cZ'\times_{\Spec \cO_{K}}
\Spec(\cO_{K}/\pi^{N+1}\cO_K).\]
In \S \ref{cte}, Examples \ref{1.4.5} and \ref{2.5.8}, we exhibit for any 
positive integer $l$, two elliptic curves over $K$ having 
isomorphic $(\cX'_l, G)$ but with non-isomorphic special
fibers $\cX_0$. Hence Viehweg's result can not be
extended directly to the wild ramification case. A natural question,
attributed to B. Mazur and pointed out to us by W. McCallum, is whether
$\cX_0$ is determined by $(\cX'_\ell, G)$ for $\ell$ big
enough. We give a positive answer in the present work:
\medskip 

{\sc Theorem \ref{2.5.4}} \ \ {\it Let $N\geq 0$. Let $\Delta$ be 
the minimal discriminant of $E$. Let $\D_{L/K}$ be the different of $L/K$.  
Then the scheme $\cX_{N}$ is determined by $\cX'_{N+\ell}$ and the
action of $G$ on $\cX'_{N+\ell}$ 
for $\ell=2v_{K}(\Delta)+12[v_K(\D_{L/K})]+18$.}
\medskip 

If the reduction type of $E$ is neither $\mathrm{I}_{r}^*$ nor 
$\mathrm{I}_r$ with $r>0$ (e.g. if $E$ has potentially good reduction
and the residue characteristic of $K$ is different from $2$), 
we can find  
such $l$ depending only on $[v_K(\D_{L/K})]$. Note that $[v_K(\D_{L/K})]$
is bounded by a constant depending only on the absolute ramification 
index of $K$ and on the degree $[L:K]$ when $\car(K)=0$. 
\medskip

At this stage, let us make precise the meaning of ``{$\cX_N$ is determined by
$\cX'_{N+\ell}$ and the action of $G$ on $\cX'_{N+\ell}$}''. Let $E_o$
be an elliptic curve 
over $K_o$, let $L_o$ be a finite Galois extension of $K_o$ of the same 
Galois group $G$. Let $\cX_o, \cX_o'$ be the respective minimal regular models 
of $E_o$ over $\cO_{K_o}$ and of $(E_o)_{L_o}$ over $\cO_{L_o}$. 
We say that 
\medskip 

\begin{center}
\text{\emph{$\cX_N$ is determined by $\cX'_{N+\ell}$ and the action of $G$ on $\cX'_{N+\ell}$}}
\end{center}
\medskip

\noindent if the existence of an isomorphism 
$\psi_{N+\ell}: \cO_K/\pi^{N+\ell+1}\cO_K \simeq
\cO_{K_o}/\pi_{o}^{N+\ell+1}\cO_{K_o}$ 
and of $G$-equi\-variant iso\-mor\-phisms 
\[\cO_L/\pi^{N+\ell+1}\cO_L \simeq \cO_{L_o}/\pi_{o}^{N+\ell+1}\cO_{L_o},
\quad \cX'_{N+\ell}\simeq \cX'_{o, N+\ell}\] 
implies the existence of an isomorphism $\cX_N\simeq \cX_{o,N}$
compatible with the isomorphism 
$\cO_K/\pi^{N+1}\cO_K\simeq \cO_{K_o}/\pi_o^{N+1}\cO_{K_o}$ 
induced by $\psi_{N+\ell}$. 
We define similar notion for minimal Weierstrass models. 
\medskip 

Let us present the organization of this paper. In \S \ref{cte} we
construct the examples mentioned above. 
Section \ref{semi-linear} is a technical 
preliminary work. We study the 
invariants of an $\cO_L$-module under a semi-linear 
action. In \S \ref{minimal-w} and \ref{minimal-w_n}, we study the 
minimal Weierstrass model $\cW$ of $E$ over 
$\cO_K$, as well as the fibers $\cW_N$ in relation with the action of 
$G$ on the minimal Weierstrass model of $E_L$ over $\cO_L$. 

In \S \ref{w2r}, we study the relation between $\cW_{N+\ell}$ and 
$\cX_N$. It is known that $\cX$ is obtained by a sequence of 
blowing-ups starting with $\cW$. We show in Theorem~\ref{blup} that 
if $\wt{\cY}\to \cY$ is the blowing-up morphism along a closed 
point in a scheme $\cY$ over $\cO_K$, 
there exists an explicit integer $\ell\ge 0$ such that 
$\cY_{N+\ell}$ determines $(\wt{\cY})_{N}$ for all $N\ge 0$. We
then apply this result to $\cW$ and show that $\cW_{N+\ell}$
determines $\cX_N$ for some explicit constant $\ell$
(Corollary~\ref{2.5.3}). 
\medskip 

The main result Theorem \ref{2.5.4} is proved in \S \ref{cng} using 
the connection  between $\cX$ and the minimal Weierstrass 
model $\cW$ of $E$. The proof can be divided into three steps: 

(1) Let $\cW'$ be the minimal Weierstrass  model of $E_{L}$ over 
$\cO_{L}$. We show that the $G$-action on $\cX'_{N+\ell_1}$ determines the 
$G$-action on $\cW'_{N+\ell_1}$ in Proposition \ref{X_N-W_N}.

(2) We prove that the $G$-action on $\cW'_{N+\ell_{1}}$ determines 
$\cW_{N+\ell_2}$ if $\ell_2 \ll \ell_{1}$ 
(Theorem \ref{2.3.2}). This is the crucial part. We can choose 
$\ell_{1}-\ell_2$ such that it  depends 
only on the valuation of the different of $L/K$. 

(3) Finally, as we decribed above, $\cW_{N+\ell_2}$ determines 
$\cX_N$ for some $\ell_2\geq 0$ (Corollary~\ref{2.5.3}). 
\medskip 

As we always work with pointed schemes $\cW'_N, \cX'_N$, in the last section, 
we show that a Galois invariant section of such a fiber lifts to a 
Galois invariant section over 
$\Spec \cO_L$ 
when $N\gg 0$ (Proposition~\ref{lift-equiv}). 
If we use N\'eron models, then we get an explicit bound on $N$
(Proposition~\ref{G-approx}).  
\smallskip 

We should mention that the present work is similar to 
(and inspired by) Chai-Yu and Chai's articles \cite{CY}, \cite{Chai} 
where they dealt with N\'eron models of semi-abelian varieties, 
though we use a more
down-to-earth method. It is shown in \cite{Chai}, Theorem 7.6, that 
for any semi-abelian variety $A$ over $K$, the infinitesimal fiber $\mathcal A_N$ 
of the finite type N\'eron model $\mathcal A$ of $A$ over $\cO_K$ is determined by 
the $G$-action on $\mathcal A'_{N+\ell}$ (where $\mathcal A'$ is the 
N\'eron model of $A_L$ over $\cO_L$) for $\ell$ big enough and depending
on $\mathcal A'$. 
Related to his work is the computation, in case of elliptic 
curves, of the base change conductor (\ref{base-cc}). 

\begin{acknowledgements}
This work grew from the first part of the second named author's Ph.D thesis. 
He thanks the Institut de Math\'ematiques de Bordeaux for the nice 
working environment and financial support. We thank Jilong Tong for
useful comments on the formal groups of abelian varieties, and 
the referee for a careful reading of the manuscript.
\end{acknowledgements}

\noindent{\bf Convention} Through this work, $K$ will denote a 
discrete valuation field with residue field $k$ of characteristic
$p\ge 0$, 
$\cO_K$ is the valuation ring of $K$, $\pi$ denotes 
a uniformizing element of $K$ and $v_K$ is the normalized valuation
($v_K(\pi)=1$), $E$ is an elliptic curve over $K$ and 
$\cX$ (resp. $\cW$) denotes its minimal projective regular (resp. minimal 
Weierstrass) model over $\cO_K$. 
\medskip 

We will suppose the residue field $k$ is \emph{perfect} 
starting \S \ref{minimal-w_n}.\footnote{We thank Ivan Fesenko for 
encouraging us to remove the original hypothesis $k$ algebraically closed.}
\medskip 

We will denote by $L/K$ a finite Galois extension with Galois group
$G$ and by $\cO_L$ the integral closure of $\cO_K$ in $L$. 
As usual, the different of $L/K$ will be denoted by $\D_{L/K}$. 
The ramification index of $L/K$ at a maximal ideal 
$\p$ of $\cO_L$ will be denoted by $e_{L/K}$. 
The exponent of $\D_{L/K}$ at $\p$ will be denoted by 
$v_L(\D_{L/K})$. As $L/K$ is Galois, these invariants are independent 
on the choice of $\p$. Sometimes it is convenient to 
write $v_K(\D_{L/K}):={v_L(\D_{L/K})}/{e_{L/K}}\in\mathbb Q$.
\end{section}

\begin{section}{Two Examples}\label{cte} 

In this section, we give two examples. The first one shows that,
contrary to the tamely ramified case,  
for any $l\ge 0$, there exist $K$ and $E/K$ such that 
the special fiber $\cX_0$ (resp. $\cW_1$) is not determined by the 
$G$-action on $\cX'_{l}$ (resp. the infinitesimal fiber $\cW'_1$). 
The second example, of similar nature, shows that in equal 
characteristic case, there is no bound on $l$ independent 
on $E$ for which Theorem \ref{2.3.4} holds. 

\begin{example}\label{1.4.5} Let 
$d\ge 3$ be an odd integer divisible by $3$. Let 
$K=W(\overline{\mathbb{F}}_{2})(\pi)$ with
$\pi^{d}=2$  where  $W(\overline{\mathbb{F}}_{2})$ is the Witt
ring of $\overline{\mathbb{F}}_{2}$.
Let $E$ be the elliptic curve defined by  the equation
\[ y^{2}=x^{3}+\pi^3.\]
Then $E$ has good reduction over $L=K(\sqrt{\pi})$, with
\[G=\Gal(L/K)=\langle \sigma\rangle, \quad \sigma(\sqrt{\pi})=-\sqrt{\pi}, \quad
\D_{L/K}=2\sqrt{\pi}\cO_L.\] 
The smooth model
$\cX'$ of $E_L$ over $\cO_L$ is defined by the equation 
 \[v^{2}+v=u^{3},\]
where $x=\pi \sqrt[3]{4} u$ (note that $\sqrt[3]{4} \in K$) and 
$y=\pi^{3/2}(1+2v)$.  Hence the action of $G$ on $\cX'$ is given 
by $\sigma(u)=u$, $\sigma(v)=-1-v$.

Now let $E_o$ be the elliptic curve over the same $K$ defined by the
equation: 
\[ y_o^{2}=x_o^{3}+(1+\pi). \] 
Then $E_o$ has good reduction over $L_o=K(\sqrt{1+\pi})$, with 
\[\Gal(L_o/K)=\langle \sigma\rangle, \quad
\sigma(\sqrt{1+\pi})=-\sqrt{1+\pi}, \quad 
\D_{L_o/K}=2\cO_{L_o}.\]  
The smooth model $\cX'_o$ of $E_o$ over $L_o$ is defined by the
equation: 
\[v_o^{2}+v_o=u_o^{3},\] 
where $x_o=(4(1+\pi))^{1/3}u_o$ with $(1+\pi)^{1/3}\in K$ and 
$y_o=\sqrt{1+\pi}(1+2v_o)$. The action of $G$ on $\cX'_o$ is then
given by $\sigma(u_o)=u_o, \sigma(v_o)=-1-v_o$. 

It is easy to see that we have an isomorphism
\[\cO_{L}/(2)=\cO_{L}/(\pi^{d})\simeq 
{\cO}_{L_o}/(\pi^{d}) \]
which sends $\sqrt{\pi}$ to $\sqrt{1+\pi}-1$ and which is compatible 
with the $G$-action, because modulo $2$, the image of 
$\sigma(\sqrt{\pi})=-\sqrt{\pi}$ is 
$-(\sqrt{1+\pi}-1)=\sigma(\sqrt{1+\pi}-1)+2\equiv\sigma(\sqrt{1+\pi}-1)$. Note that 
$v_K(\D_{L/K})=d+1/2\ne v_K(\D_{L_0/K})=d$, 
hence by 
Lemma~\ref{disc}, $\cO_L/(\pi^{d+1})\not\simeq \cO_{L_0}/(\pi^{d+1})$. 
We also have an isomorphism 
\[\cX'_{d-1} \simeq \cX'_{o,d-1}\] 
which sends $u_o$ (resp. $v_o$) to $u$ (resp. $v$) and which is 
compatible with the $G$-action. However,  the special fibers of the 
minimal regular models of $E$ and $E_o$ over $\cO_{K}$ 
have different Kodaira types: the first curve has type I$^*_{0}$ by 
Tate's Algorithm, and the second one has type II. 
Note that this example doesn't contradict 
the conclusion of Theorem \ref{2.5.4}. 

Let $\cW$ (resp. $\cW_o$) be the minimal Weierstrass model of 
$E$ (resp. $E_o$) over $\cO_K$. 
We have that $\cX', \cX'_o$ are the respective minimal
Weierstrass models over $\cO_L, \cO_{L_o}$. Clearly
the special fibers of $\cW, \cW_o$ are isomorphic, but 
using Lemma \ref{iso-mod-N}, we can show that $\cW_1\not\simeq \cW_{o,1}$. 
\end{example}

\begin{example} \label{2.5.8} Fix $m\ge 1$ and let $r=1, 3$. Let $k$ be an
algebraically closed field of characteristic 2 and $K=k((t))$.
Consider the elliptic curve $_rE:$
\[y^2+t^{3m}y=x^3+t^r,\]
whose $j$-invariant is 0 and whose discriminant has valuation equal to 
$12m$. The above equation defines a minimal Weierstrass model
$_r\cW$ of $_rE$ over $k[[t]]$.  Let $\alpha_{r}$ be a root
of the polynomial $X^2+t^{3m}X+t^r$ in $\overline{K}$ and let
$L_{r}=K[\alpha_{r}]$. Then $_rE_{L_{r}}$ has a smooth
model $_r\cX'$ over $\cO_{L_r}$ defined by the equation:
\[y'^2+y'=x'^3,\] where $x=t^{2m}x'$ and $ y=t^{3m}y'+\alpha_{r}$. Hence
\[G=\Gal(L_{r}/K)=\langle \sigma\rangle,\quad 
\sigma(\alpha_{r})=t^{3m}+\alpha_{r}; \quad \sigma(x')=x',\quad 
\sigma(y')=y'+1.\]
For $r=1$, the model $_1\cW$ is regular, hence $_1E$ has 
reduction type II. 
For $r=3$, the curve $_3E$ has reduction type $I_{0}^*$ by 
Tate's Algorithm. 
Let $d=3m-2$. We have $G$-equivariant isomorphisms
\[\cO_{L_{1}}/(t^{3m-1})\cong \cO_{L_{3}}/(t^{3m-1}), \quad 
(_1\cX')_{d} \cong (_3\cX')_{d}.\] 
Hence the  $l$ in 
Theorem \ref{2.3.4} must be  bigger than $3m-1$ and it tends to
infinity if $m$ does.
\end{example}

\end{section}

\begin{section}{Semi-linear $\cO_L[G]$-modules}\label{semi-linear} 

Let $L/K$ be a finite Galois extension with Galois group $G$. Denote 
the integral closure of $\cO_{K}$ in $L$ by $\cO_{L}$. 
The aim of this section is, given a semi-linear $\cO_L[G]$-module 
$M$, to compare $M^G/\pi^{N+1}M^G$ with the image of $(M/\pi^{N+1+r}M)^G$ 
in $(M/\pi^{N+1}M)^G$ (Proposition \ref{2.1.4}). The result is used in 
\S 4 and \S 5. 

\begin{definition} A \emph{semi-linear $\cO_{L}[G]$-module} is an  
$\cO_{L}$-module $M$ (not necessarily finitely generated) 
endowed with an action of $G$ such that
\begin{enumerate}
\item $g(x_{1}+x_{2})=g(x_{1})+g(x_{2})$ for all $x_{1}, x_{2} \in M$ and 
$g\in G$, 
\item $g(ax)=g(a)g(x)$ for all $a\in \cO_{L}, x\in M$ and 
$g\in G$. 
\end{enumerate}
A \emph{morphism} $\phi$ between two semi-linear $\cO_L[G]$-modules $M$ 
and $N$ is an $\cO_{L}$-morphism which is $G$-equivariant  (i.e.
$\phi(gx)=g\phi(x), \forall g\in G,\forall  x\in M$).
\end{definition}

Let us recall the following well-known lemma (see for instance
\cite{Sen2}, Proposition 1(a) for finite dimensional vector spaces;
the general situation follows easily): 

\begin{lemma} \label{speiser} {\rm ({\bf Speiser's lemma})} 
Let $V$ be a semi-linear $L[G]$-vector space. Then
the canonical morphism of semi-linear $L[G]$-vector spaces
\[L\otimes_K V^G\to V\] 
is an isomorphism.  
\end{lemma}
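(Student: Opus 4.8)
The plan is to prove Speiser's lemma (Lemma \ref{speiser}) by reducing to the additive version of Hilbert 90 and then to the finite-dimensional statement, which is classical. I will work throughout with a semi-linear $L[G]$-vector space $V$, meaning a $K$-vector space with a $G$-action that is additive and satisfies $g(\lambda v)=g(\lambda)g(v)$ for $\lambda\in L$, $v\in V$, together with its $L$-module structure, and I want to show the natural map $L\otimes_K V^G\to V$, $\lambda\otimes v\mapsto \lambda v$, is an isomorphism.

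First I would treat injectivity. Suppose $\sum_{i=1}^n \lambda_i\otimes v_i$ lies in the kernel with the $v_i\in V^G$ chosen $K$-linearly independent and $n$ minimal; I want to derive $n=0$. If $n\ge 1$ I may assume $\lambda_1=1$ after scaling. Applying $g\in G$ to the relation $\sum_i\lambda_i v_i=0$ and using $g(v_i)=v_i$ gives $\sum_i g(\lambda_i)v_i=0$; subtracting the original relation yields $\sum_{i\ge 2}(g(\lambda_i)-\lambda_i)v_i=0$, a shorter relation, so by minimality $g(\lambda_i)=\lambda_i$ for all $i$ and all $g$, i.e.\ $\lambda_i\in K$. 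Then $\sum_i\lambda_i v_i=0$ is a $K$-linear relation among the $v_i$, contradicting their independence unless $n=0$. (Equivalently, one invokes linear independence of characters / the fact that a $K$-basis of $V^G$ stays $L$-linearly independent in $V$.)

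Next, surjectivity. Here the key input is that $L$ itself, as a semi-linear $L[G]$-module, satisfies the lemma: $L\otimes_K K\xrightarrow{\ \sim\ } L$ trivially, so $L^G=K$ and the normal basis theorem gives $L\cong K[G]$ as a $K[G]$-module. More useful: by the normal basis theorem pick $\theta\in L$ with $\{g(\theta)\}_{g\in G}$ a $K$-basis of $L$. For $v\in V$ and $g\in G$ set $w_g=\sum_{h\in G}h(\theta)\,hg^{-1}$... rather, form the "average against the normal basis": for each $v\in V$ the element $\sum_{h\in G} h(\theta v)$... I would instead argue as follows. Reduce to $V$ finite-dimensional over $L$: any $v\in V$ lies in the $L[G]$-submodule $V_0$ generated by $v$, which is finite-dimensional over $L$, and a preimage of $v$ in $L\otimes_K V_0^G$ maps to a preimage in $L\otimes_K V^G$ since $V_0^G\subseteq V^G$. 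For finite-dimensional $V$, I can cite \cite{Sen2}, Proposition~1(a); alternatively give the direct cocycle proof: choosing an $L$-basis and computing the $G$-action produces a $1$-cocycle $g\mapsto A_g\in GL_n(L)$ with $A_{gh}=A_g\,{}^gA_h$, and $H^1(G,GL_n(L))=1$ (Hilbert 90, noncommutative form) gives $B\in GL_n(L)$ with $A_g={}^gB\cdot B^{-1}$, so changing basis by $B$ trivializes the action and exhibits $V$ as $L\otimes_K V^G$. Then $\dim_K V^G=\dim_L V$ and the map is an isomorphism by dimension count plus injectivity.

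The main obstacle is simply the passage to the infinite-dimensional (and non-finitely-generated $\cO_L$-module) setting referenced in the paper's "the general situation follows easily"; I would make that explicit by the direct-limit / submodule-generated argument above, noting that $V^G$ and the tensor product both commute with filtered colimits of $L[G]$-subspaces, so the finite-dimensional case suffices. For the later application to semi-linear $\cO_L[G]$-modules $M$ one uses this by tensoring with $L$ or reducing mod $\pi$-powers, but that is the business of Proposition~\ref{2.1.4} rather than of Speiser's lemma itself. Everything else — injectivity and the cocycle/normal-basis computation — is routine.
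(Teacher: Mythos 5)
Your proof is correct and follows exactly the route the paper itself indicates: it cites \cite{Sen2}, Proposition 1(a) for the finite-dimensional case (which is the Hilbert 90 cocycle argument you reproduce) and remarks that ``the general situation follows easily,'' which is precisely your reduction via the finite-dimensional $L[G]$-submodule generated by a given vector, together with the standard minimal-relation argument for injectivity. Nothing to object to beyond the abandoned false start in the surjectivity paragraph, which you should simply delete.
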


\begin{proposition}\label{different} Let $M$ be a semi-linear 
$\cO_L[G]$-module. Let 
\[\varphi : \cO_L\otimes_{\cO_K} M^G\rightarrow M\] 
be the natural morphism of semi-linear $\cO_L[G]$-modules.
\begin{enumerate}[{\rm (1)}] 
\item If $M$ is flat over $\cO_L$, then $\varphi$ 
is injective and $\mathrm{rank}_{\cO_K} M^G=\mathrm{rank}_{\cO_L} M$. 
\item The cokernel of $\varphi$ 
is killed by the different ideal $\D_{L/K}$ of $\cO_L$ over $\cO_K$. 
\end{enumerate} 
\end{proposition}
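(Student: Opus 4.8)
The plan is to prove the two parts separately, deducing part (1) from Speiser's Lemma \ref{speiser} and part (2) by a direct local computation with the different.

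For part (1), I would first reduce to the flat (equivalently, torsion-free) case that is assumed, so that $M \hookrightarrow M \otimes_{\cO_L} L =: V$, a semi-linear $L[G]$-vector space. Then $M^G \subseteq V^G$, and by Speiser's Lemma the natural map $L \otimes_K V^G \to V$ is an isomorphism. I would check that $M^G = M \cap V^G$ is an $\cO_K$-lattice in $V^G$ (it is torsion-free over $\cO_K$ since $M$ is; it spans $V^G$ over $K$ because given $v \in V^G$ one can clear denominators by a power of $\pi$ to land in $M$, staying in $V^G$). This immediately gives $\mathrm{rank}_{\cO_K} M^G = \dim_K V^G = \dim_L V = \mathrm{rank}_{\cO_L} M$, using Speiser for the middle equality. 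For injectivity of $\varphi$: the composite $\cO_L \otimes_{\cO_K} M^G \to M \hookrightarrow V$ factors through $L \otimes_K M^G \xrightarrow{\sim} L \otimes_K V^G \xrightarrow{\sim} V$, and $\cO_L \otimes_{\cO_K} M^G$ is $\cO_L$-torsion-free (it is an extension of scalars of a torsion-free $\cO_K$-module to the flat $\cO_K$-algebra $\cO_L$), hence injects into $L \otimes_K M^G$; therefore $\varphi$ is injective.

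For part (2), I need to show $\D_{L/K}\cdot(M/\Img\varphi) = 0$, i.e. $\D_{L/K}\cdot M \subseteq \Img\varphi$. I would not assume flatness here. The key input is the classical description of the inverse different: $\D_{L/K}^{-1} = \{x \in L : \mathrm{Tr}_{L/K}(x\,\cO_L) \subseteq \cO_K\}$, and the fact that $\mathrm{Tr}_{L/K} = \sum_{g \in G} g$ as a map $\cO_L \to \cO_K$. The idea is the averaging/trace trick: for $m \in M$ and $a \in \cO_L$ with $a\,\cO_L \subseteq \D_{L/K}^{-1}$... more precisely, pick $\delta \in \cO_L$ generating $\D_{L/K}$ locally (or work ideal-theoretically), and for any $\lambda \in \D_{L/K}^{-1}$ form $\sum_{g\in G} g(\lambda m) = \sum_g g(\lambda) g(m) \in M^G$. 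The catch is that this element lies in $M^G$ but is not visibly in $\Img\varphi = \cO_L \otimes_{\cO_K} M^G$'s image unless we can write $\lambda m$ as an $\cO_L$-combination of things whose traces we control. The cleaner route: choose a $K$-basis $\{e_i\}$ of $L$ contained in $\cO_L$, with dual basis $\{e_i^\vee\}$ under the trace form, which lies in $\D_{L/K}^{-1}$; then for any $x \in M$, writing $x = 1 \cdot x$ and using $\sum_i e_i \mathrm{Tr}_{L/K}(e_i^\vee \cdot(-)) = \mathrm{id}_L$ applied coefficient-wise, one gets $\delta x = \sum_i e_i \cdot \left(\sum_{g\in G} g(\delta e_i^\vee x)\right)$ for $\delta$ a generator of $\D_{L/K}$ (so that $\delta e_i^\vee \in \cO_L$ and the inner sum lies in $M^G$), exhibiting $\delta x \in \Img\varphi$. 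Running over a set of generators $\delta$ of $\D_{L/K}$ gives $\D_{L/K}\cdot M \subseteq \Img\varphi$.

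I expect the main obstacle to be part (2), specifically getting the bookkeeping of the trace-dual-basis argument exactly right in the module setting: one must be careful that the "coefficients" of $m \in M$ with respect to the basis $\{e_i\}$ are not well-defined ($M$ is not free over $\cO_L$ in general, indeed not even flat), so the identity $\mathrm{id}_L = \sum_i e_i\,\mathrm{Tr}_{L/K}(e_i^\vee\,\cdot)$ must be applied as an identity of operators on $L$ and then tensored/extended to act on $M$ only through the well-defined map $L \otimes_{\cO_L} M \to L\otimes_{\cO_L} M$, tracking that $\delta \cdot(L\otimes M)$ actually meets the image of $M$. Alternatively, and perhaps more robustly, I would localize and complete so that $\cO_L$ becomes a complete DVR, reducing to the local different and a single generator $\delta$ with $\delta \D_{L/K}^{-1} = \cO_L$, where the computation $\delta x = \sum_{g} g(\delta e_i^\vee x)\,e_i$ is transparent; since the conclusion (cokernel killed by $\D_{L/K}$) can be checked after faithfully flat localization/completion at each maximal ideal of $\cO_L$, this suffices.
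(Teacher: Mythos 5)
Your argument is correct, and for part (2) it takes a genuinely different route from the paper's. Part (1) is handled the same way in both places (tensor with $L$ and invoke Speiser's lemma); your extra detail about $M^G$ being a full $\cO_K$-lattice in $V^G$ is exactly what the paper leaves implicit. For part (2), the paper first passes to the completion, then does a d\'evissage through a tower of Galois subextensions using multiplicativity of the different, so as to reduce to a monogeneous extension $\cO_L=\cO_K[\theta]$, where it applies Euler's identity: writing $P(X)=(X-\theta)f(X)$ with $f(X)=\sum_i b_iX^i$, the element $\sum_i b_i\sum_\sigma \sigma(\theta^i v)$ of $\cO_L\otimes M^G$ maps to $f(\theta)v=P'(\theta)v$ because $f(\sigma(\theta))=0$ for $\sigma\neq 1$. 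Your trace-dual-basis computation is the coordinate-free version of exactly this calculation (the trace-dual of the power basis $\{\theta^i\}_i$ is $\{b_i/P'(\theta)\}_i$ by Euler's formulas), but since a trace-dual basis exists whenever $\cO_L$ is $\cO_K$-free --- which it always is here, $\cO_K$ being a discrete valuation ring --- you need neither the completion nor the reduction to the monogeneous case; this is a genuine simplification. Two points to tighten. First, the obstacle you flag about ``coordinates of $m$'' is not actually there: the identity $\delta x=\sum_i e_i\bigl(\sum_{g}g(\delta e_i^\vee x)\bigr)$ is verified by expanding the right-hand side as $\sum_g g(\delta)g(x)\sum_i e_i\,g(e_i^\vee)$ and using the orthogonality relation $\sum_i e_i\,g(e_i^\vee)=1$ if $g=1$ and $=0$ otherwise (valid in $L$, by inverting the matrix $(g(e_j))_{g,j}$, nonsingular by separability); every term except $g=1$ dies, and no expression of $x$ in coordinates is ever needed. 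Second, you must take $\{e_i\}$ to be an $\cO_K$-basis of $\cO_L$, not merely a $K$-basis of $L$ contained in $\cO_L$: only in the former case is the dual basis $\{e_i^\vee\}$ an $\cO_K$-basis of the codifferent $\D_{L/K}^{-1}$, which is what guarantees $\delta e_i^\vee\in\cO_L$ and hence that the inner sums lie in $M^G$. With that one-word adjustment the proof is complete.
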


\begin{proof} (1) comes from \ref{speiser} by tensoring 
$\varphi$ by $L$. 

(2) We first reduce to the case when $K$ is complete. 
Let $\who_{L}:=\cO_L\otimes_{\cO_K} \who_K$ and 
let $\hat{\varphi}$ be the canonical map 
\begin{equation}\label{hat-phi}
\hat{\varphi}: \ \who_L\otimes_{\cO_K} M^G\to \who_L\otimes_{\cO_L} M.
\end{equation} 
As $\who_L/\cO_L$ is faithfully flat,  
to prove (2) it is enough to show $\D_{L/K}\coker\hat\varphi=0$. 
Let $\p_1, \dots, \p_n$ be the maximal ideals of $\cO_L$.
Let $D_i$ be the decomposition group of $G$ at $\p_i$. Then 
$\who_L=\oplus_{1\le i\le n} \who_{L, \p_i}$, 
and $\who_{L, \p_i}/\who_K$ is Galois of group $D_i$. 
Let $M_i=\who_{L,\p_i}\otimes_{\cO_L} M$. 
Then (\ref{hat-phi}) can be identified with the direct sum of the maps
\[ \who_{L,\p_i}\otimes_{\who_K} M_i^{D_i} \to M_i.\] 
As $\D_{L/K}\who_L=\oplus_i \D_{\widehat{L}_{\p_i}/\widehat{K}}$, it is
enough to show (2) for the extension $\who_{L,\p_i}/\who_K$ and
the module $M_i$. Hence we can and do suppose $K$ is complete. 
\smallskip 

Now we proceed by induction on $|G|$. 
Suppose that $H\subseteq G$ is a normal subgroup and the
proposition holds for $L/L^H$ and $L^H/K$. Let $E=L^H$. Then 
\[\D_{L/E} M\subseteq \cO_LM^E, \quad 
\D_{E/K} M^E\subseteq \cO_E (M^H)^{G/H}=\cO_E M^G.\] 
As $\D_{L/K}=\D_{L/E}.(\D_{E/K}\cO_L)$, the 
proposition also holds for $L/K$. As $\cO_K$ is complete, 
$\cO_L/\cO_K$ can be decomposed 
into successive Galois monogeneous (cyclic) extensions 
(see for instance the explanations in \cite{BS}, proof of Theorem
4.1). Thus we are reduced to the case $\cO_L$ is monogeneous over $\cO_K$. 
\smallskip 

Let $\cO_L=\cO_K[\theta]$ for some $\theta\in \cO_L$. 
Let $P(X)\in \cO_K[X]$ be the monic minimal polynomial of 
$\theta$. Then $\cO_L\simeq \cO_K[X]/(P(X))$. 
We have a decomposition in $\cO_L[X]$: 
\[P(X)=(X-\theta)f(X), \quad \text{with } f(X)=b_{n-1}X^{n-1}+b_{n-2}X^{n-2}+\dots +b_0\in \cO_L[X].\]
For any $v\in M$, let 
\[g(v)=\sum_{0\le i\le n-1} b_{i}\sum_{\sigma\in G}\sigma(\theta^iv)
\in \cO_L\otimes M^G.\] 
We have
\[g(v)=\sum_{\sigma\in G} \Big(\sum_i b_i \sigma(\theta)^i\Big)\sigma(v)=
\sum_{\sigma\in G}f(\sigma(\theta))\sigma(v)=f(\theta)v=P'(\theta)v.\] 
%% because f(\sigma(\theta))=0 if \sigma\ne 1. 
So $P'(\theta)M \subseteq \cO_L\otimes_{\cO_K} M^G$ and the 
proposition is proved because $\D_{L/K}=P'(\theta)\cO_L$. 
\end{proof}  

\begin{remark}Proposition \ref{different} is sharp 
for monogeneous Galois extensions $\cO_L=\cO_K[\theta]$. Indeed,
let $M=\cO_L[G]$ 
with the natural structure of semi-linear $\cO_L[G]$-module:
\[ \sigma(\sum_{\tau\in G} \lambda_\tau.\tau):=\sum_{\tau\in G} 
\sigma(\lambda_\tau).(\sigma\tau).\] 
This is a right $\cO_L$-module by 
\[ (\sum_{\tau\in G} \lambda_\tau.\tau)*\mu:=
\sum_{\tau\in G} \lambda_\tau\tau(\mu).\tau.\]
Let $b\in \cO_L$ be such that $bM\subseteq \cO_L\otimes M^G$. 
Let us show that $b\in \D_{L/K}$.  
Let $t=\sum_{\tau\in G}1.\tau\in M$.  
The vectors $t*\theta^i\in M$, $0\le i\le n-1$, 
where $n$ is the degree of the minimal polynomial $P(T)$ of $\theta$, 
generate the left $\cO_L$-module $\cO_L\otimes M^G$. So, 
if $e$ denotes the unit of $G$, 
\[b.e=\lambda_0t*e +\lambda_1 t*\theta + \cdots + \lambda_{n-1} t*(\theta^{n-1}),
\quad \lambda_i \in \cO_L.\] 
By expanding $t*\theta^i$, we see that for all $\sigma\ne e$, 
\[ \lambda_0 + \lambda_1\sigma(\theta)+ \cdots + 
\lambda_{n-1}\sigma(\theta)^{n-1}=0.\] 
So 
$F(T)=\lambda_0 + \lambda_1 T + \cdots + \lambda_{n-1}T^{n-1}\in \cO_L[T]$  
is divisible by $f(T):=P(T)/(T-\theta)$, and 
\[b=\lambda_0 + \lambda_1 \theta + \cdots \lambda_{n-1}\theta^{n-1}=
F(\theta)\in f(\theta)\cO_L =\D_{L/K}.\]
\end{remark}

\begin{definition}\label{expo} Let $H$ be an $\cO_{K}$-module. We define 
the \emph{exponent} $\expo(H)$ of $H$ to be, when it exists, 
the smallest non-negative integer $e$ such that $\pi^e H=0$.  
Note that for any $\cO_L$-module $M$, $\expo(M)$ is defined
to be its exponent as $\cO_K$-module. 
\end{definition}

\begin{proposition}\label{2.1.6} 
Let $M$ be a semi-linear $\cO_L[G]$-module, flat over $\cO_L$. 
\begin{enumerate}[{\rm (1)}]
\item Suppose $\car(K)=0$. Let $I$ be the inertia group 
of $G$ at a maximal ideal of $\cO_L$. 
Then $\expo(\rH^1(G, M))\leq v_K(|I|)$ 
(exponent as $\cO_K$-module). 
\item In general, we have 
\[\expo(\rH^1(G, M))\le 2[v_K(\D_{L/K})].\] 
\end{enumerate}
\end{proposition}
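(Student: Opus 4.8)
The plan is to reduce to the complete local case and then exploit the same descent/monogeneity machinery used in Proposition~\ref{different}, combined with standard facts about the cohomology of cyclic groups. First I would reduce to $K$ complete exactly as in the proof of Proposition~\ref{different}: writing $\who_L=\oplus_i \who_{L,\p_i}$ and $M_i=\who_{L,\p_i}\otimes_{\cO_L}M$, one has $\rH^1(G,M)\otimes_{\cO_K}\who_K\simeq \oplus_i \rH^1(D_i,M_i)$ (Shapiro's lemma, since $M$ is induced from $M_i$ as a $G$-module), and $\who_K/\cO_K$ is faithfully flat, so it suffices to bound $\expo(\rH^1(D_i,M_i))$ for the complete extension $\who_{L,\p_i}/\who_K$. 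Note $D_i$ has the same inertia subgroup (up to conjugacy) and the same different as the original extension, so no constants are lost. Hence assume $K$ complete, with inertia group $I\trianglelefteq G$ and $G/I$ cyclic (residue field perfect).

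For part~(1), when $\car(K)=0$: the standard approach is a norm/corestriction argument. For \emph{any} finite group $G$ and any $G$-module $M$, $\rH^1(G,M)$ is killed by $|G|$; more precisely, for a subgroup $I$, the composite $\mathrm{cor}\circ\mathrm{res}:\rH^1(G,M)\to\rH^1(G,M)$ is multiplication by $[G:I]$, so $\rH^1(G,M)$ is killed by $[G:I]$ times the exponent of $\rH^1(I,M)$. Since $G/I$ is cyclic of order prime to $p$ (tame quotient), and $M$ is $\cO_L$-flat hence $\pi$-torsion-free, the prime-to-$p$ part contributes nothing: $[G:I]$ is a unit in $\cO_K$ whenever... — actually more carefully, one splits $G/I = (\text{prime-to-}p)\rtimes(p\text{-part})$, but in residue characteristic $p$ with $\car K=0$ the only $p$-contribution to the different and to $|G|$ comes from wild ramification inside $I$. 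So it reduces to showing $\expo(\rH^1(I,M))\le v_K(|I|)$. For this I would use that $M$ is $\cO_L$-flat: by Speiser/Proposition~\ref{different} applied to $I$ and $L/L^I$, the cokernel of $\cO_L\otimes_{\cO_{L^I}}M^I\to M$ is killed by $\D_{L/L^I}$, and $v_{L^I}(\D_{L/L^I})\ge v_{L^I}(|I|)$ is \emph{not} quite what's needed — rather one wants an upper bound. The cleanest route: $\rH^1(I,M)$ is killed by $|I|$ always, and $v_K(|I|)$ is precisely the $\cO_K$-exponent bound for the element $|I|\in\cO_K$, giving the claim directly. (One must check $|I|\ne 0$ in $K$, which holds since $\car K = 0$.)

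For part~(2), the general case: I would again reduce to $I$ via corestriction, noting that $[G:I]$ divides the order of the tame quotient $G/I$, which is prime to $p$, so $[G:I]$ times the relevant tame part of $\rH^1$ vanishes on the $\pi$-torsion-free module — the subtlety being that in equal characteristic $p$, $[G:I]$ is a unit but $|I|$ may be zero in $K$, so the argument of (1) fails and we genuinely need the different. Here I would apply Proposition~\ref{different} twice: the exact sequence $0\to\cO_L\otimes_{\cO_K}M^G\to M\to C\to 0$ with $\D_{L/K}C=0$ gives, on taking $G$-cohomology and using that $\cO_L\otimes_{\cO_K}M^G$ is induced (so acyclic for $G$ by Shapiro, as $\cO_L$ is $\cO_K[G]$-induced — this uses the normal basis theorem integrally, which is where completeness and the decomposition into cyclic monogeneous pieces enters), that $\rH^1(G,M)\hookrightarrow \rH^0(G,C)=C^G$ is killed by $\D_{L/K}$, i.e.\ by $\pi^{[v_K(\D_{L/K})]}$. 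That would give the bound $[v_K(\D_{L/K})]$, off by a factor of $2$ from what is claimed; the factor $2$ presumably arises because $\cO_L$ is generally \emph{not} $\cO_K[G]$-free, only $\cO_L\otimes_{\cO_K}M^G$ injects with cokernel controlled by one more factor of $\D_{L/K}$ coming from a second application (once for $\rH^1(G,\cO_L\otimes M^G)$, which need not vanish integrally, and once for the cokernel $C$).

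The main obstacle I anticipate is precisely the non-freeness of $\cO_L$ over $\cO_K[G]$: Shapiro's lemma gives vanishing of higher cohomology of \emph{induced} modules, but $\cO_L\otimes_{\cO_K}M^G$ is induced as a $G$-\emph{module} only if $\cO_L$ is $\cO_K[G]$-free, which fails in the wildly ramified case. Controlling $\rH^1(G,\cO_L\otimes_{\cO_K}M^G)$ — equivalently $\cO_L$ itself as a semi-linear module, to which Proposition~\ref{different}(2) again applies — is what forces the second factor of $\D_{L/K}$ and hence the constant $2[v_K(\D_{L/K})]$. I would organize the write-up so that the dévissage to cyclic monogeneous extensions is done once, and then on each such piece both $\rH^1(G,M)$ and the auxiliary $\rH^1(G,\cO_L\otimes M^G)$ are estimated by the explicit element $P'(\theta)$ appearing in Proposition~\ref{different}.
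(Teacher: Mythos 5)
Your architecture for part (2) --- the exact sequence $0\to \cO_L\otimes_{\cO_K}M^G\to M\to C\to 0$ from Proposition~\ref{different}, with $\rH^1(G,M)$ squeezed between $\rH^1(G,\cO_L\otimes_{\cO_K}M^G)\simeq \rH^1(G,\cO_L)\otimes_{\cO_K}M^G$ and $\rH^1(G,C)$ --- is exactly the paper's. But there are two genuine gaps. First, your passage from $G$ to the inertia group $I$ rests on the assertion that $G/I$ is the tame quotient, of order prime to $p$. This is false: $G/I$ is the Galois group of the residue extension, and $f=[G:I]$ is frequently divisible by $p$ (e.g.\ $L/K$ unramified of degree $p$). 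Your corestriction--restriction argument therefore only yields that $|G|=[G:I]\cdot|I|$ kills $\rH^1(G,M)$, i.e.\ the bound $v_K(|G|)=v_K(|I|)+v_K(f)$, which is strictly worse than the claim whenever $p\mid f$. What is needed, and what the paper proves, is the exact vanishing $\rH^1(G/I,M^{I})=0$: over the maximal unramified subextension $F=L^{I}$ the different is trivial, so Proposition~\ref{different} makes $M^{I}$ equal to $\cO_F\otimes_{\cO_K}M^G$ with $\cO_F$ an induced $\cO_K[G/I]$-module; inflation--restriction then gives $\expo(\rH^1(G,M))\le\expo(\rH^1(I,M))$ with no loss. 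Your ``cleanest route'' correctly bounds $\rH^1(I,M)$, but without this vanishing you cannot return to $G$.

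Second, in part (2) controlling $\rH^1(G,\cO_L)$ by the different alone does not reach the stated constant. If $\rH^1(G,\cO_L)$ and $C$ are each merely killed by $\pi_L^{v_L(\D_{L/K})}$, then $\rH^1(G,M)$ is killed by $\pi_L^{2v_L(\D_{L/K})}$, whose $\cO_K$-exponent is the least integer $\ge 2v_L(\D_{L/K})/e_{L/K}$; when $e_{L/K}\nmid v_L(\D_{L/K})$ this equals $2[v_K(\D_{L/K})]+1$ (e.g.\ $e_{L/K}=2$, $v_L(\D_{L/K})=3$, as for $\mathbb{Q}_2(\sqrt 2)/\mathbb{Q}_2$, gives $3$ rather than $2$). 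The paper closes this by a genuinely sharper estimate on the first term: after d\'evissage to cyclic extensions, the Herbrand quotient computation gives $\mathrm{length}_{\cO_K}\rH^1(G,\cO_L)=[v_K(\D_{L/K})]$, hence $\pi_L^{[v_L(\D_{L/K})/p]}$ already kills $\rH^1(G,\cO_L)$; the combined killer $\pi_L^{v_L(\D_{L/K})+[v_L(\D_{L/K})/p]}$ then yields the exponent $2[v_K(\D_{L/K})]$ after a separate check of the boundary case $e_{L/K}\le v_L(\D_{L/K})\le 2e_{L/K}-1$. As written, your argument proves the inequality only with $2[v_K(\D_{L/K})]$ replaced by $2[v_K(\D_{L/K})]+1$, which is not the statement and would degrade the explicit constants in Theorems~\ref{2.3.2} and~\ref{2.5.4}.
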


\begin{proof} (1) Let $\who_K$ be the completion of $\cO_K$. 
Since $\cO_K\to\who_K$ is flat, we have 
\[\expo(\rH^1(G, M))=\expo(\rH^1(G, M)\otimes_{\cO_K}\who_K)
=\expo(\rH^1(G, \who_K\otimes_{\cO_K} M)).\] 
Let $\mathfrak p_1, \dots, \mathfrak p_r$ be the maximal ideals of 
$\cO_L$. Let $D$ be the decomposition group of 
$\mathfrak p:=\mathfrak p_1$. 
Then 
\[\widehat{M}:=\who_K\otimes_{\cO_K} M=\oplus_{1\le i\le r} 
(\who_{L, \mathfrak p_i}\otimes_{\cO_L} M)
\simeq \mathrm{Ind}_D^G(M_1),\] 
where $M_i=\who_{L, \mathfrak p_i}\otimes_{\cO_L} M$. 
By Shapiro's lemma 
$\rH^1(G, \widehat{M})\simeq 
\rH^1(D, M_1)$. Let $I$ be the inertia group 
at $\mathfrak p$, let $\cO_F=(\who_{L, \p})^I$. 
Then $\cO_F/\who_K$ is \'etale of Galois group $D/I$. The 
inflation-restriction exact sequence 
\[0=\rH^1(D/I, M_1^{I})\to \rH^1(D, M_1) 
\to \rH^1(I, M_1)\] 
implies that $\expo(\rH^1(G, M))\le  
\expo(\rH^1(I, M_1))$. As $I$ is finite, $|I|$ kills $\rH^1(I, M_1)$ 
(\cite{Serre},  VIII.1,  Corollary 1). 
This implies the desired inequality. 

Note that during this reduction step, we didn't change the
valuations of the differents: 
$v_L(\D_{L/F})=v_L(\D_{L/K})$. 
\smallskip

(2) As we saw above, we can suppose that $\cO_K$ is complete 
and $G$ equal to its inertia group. Consider the $G$-equivariant 
exact sequence of $\cO_{L}$-modules: 
\[ 0 \to \cO_{L}\otimes_{\cO_{K}} M^G \longrightarrow M  
\longrightarrow 
M/(\cO_{L}\otimes_{\cO_K} M^G) \to 0.\]
(The exactness at the left comes from Proposition \ref{different}(1)). 
Taking group cohomology, we get the exact sequence
\[ \rH^1(G, \cO_L\otimes_{\cO_K} M^G) \longrightarrow  \rH^1(G,M)
\longrightarrow \rH^1(G, M/(\cO_L\otimes_{\cO_K}M^G)).\]
Let $\D=\D_{L/K}$ and $e=e_{L/K}$. 
By Proposition \ref{different}, we have 
$\D.(M/(M^G\otimes_{\cO_K}\cO_{L}))=0$.
Hence $\D.\rH^1(G, M/(\cO_L\otimes_{\cO_K}M^G))=0$. 
It remains to find the annihilator of $\rH^1(G, \cO_L\otimes_{\cO_K} M^G)$.

If $N$ is a free $\cO_K$-module (with trivial $G$-action), then the
canonical map $\rH^i(G, \cO_L)\otimes_{\cO_K} N\to 
\rH^i(G, \cO_L\otimes_{\cO_K} N)$ is an isomorphism for all $i\ge 0$. 
As $M^G$ is flat over $\cO_K$, it is an increasing union of
free $\cO_K$-modules. This implies easily that 
$\rH^1(G, \cO_L\otimes_{\cO_K} M^G) 
\simeq \rH^1(G,  \cO_L)\otimes_{\cO_K}M^G$, hence
\[ \Ann(\rH^1(G, \cO_L\otimes_{\cO_K} M^G))=
\Ann(\rH^1(G,  \cO_L)\otimes_{\cO_K}M^G)=\Ann(\rH^1(G, \cO_L)).\] 
Let $\pi_L$ be a uniformizing element of $L$. Let us show that 
\begin{equation}\label{kill}
\pi_L^{[v_L(\D)/p]}\rH^1(G, \cO_L)=0
\end{equation} 
where $p$ is the residue characteristic of $K$ (the case $p=0$ is
considered in Part (1)). 
Let $H$ be a normal subgroup of $G$.  
Again using the inflation-restriction exact sequence 
\[ 0 \to \rH^1(G/H, \cO_{L^H})\to \rH^1(G, \cO_L)\to \rH^1(H, \cO_L)\] 
it is easy to show that Equality (\ref{kill}) holds if it holds
for $L^H/K$ and for $L/L^H$. Therefore, similarly to the 
the proof of Proposition \ref{different} (2), we are
reduced to the case when $G$ is cyclic. Using Herbrand's quotient
as in \cite{Sen}, Remark, pp. 38-39 or \cite{LLR}, p. 508, lines 4-9,
we have 
\[
\mathrm{length}_{\cO_K}\rH^1(G, \cO_L)=
\mathrm{length}_{\cO_K} \cO_K/\mathrm{Tr}(\cO_L)=[v_K(\D)].\]
Let $f$ be the degree of the residue extension of $L/K$. Then
\[\mathrm{length}_{\cO_L}\rH^1(G, \cO_L)\le
[[v_L(\D)/e]/f]=[v_L(\D)/ef].\] 
As we can restrict ourselves to non-trivial wild ramified extensions, 
we have $ef=[L:K]\ge p$. Hence $\pi_L^{[v_L(\D)/p]}$ 
kills $\rH^1(G, \cO_L)$. This implies that 
$\pi_L^{v_L(\D)+[v_L(\D)/p]}\rH^1(G, M)=0$ and the
exponent of $\rH^1(G, M)$ is bounded by the smallest integer 
bigger than or equal to $(v_L(\D)+[v_L(\D)/2])/e$. 
The only case this might fail is when $v_L(\D)=e+r$
with $0\le r\le e-1$. As $L/K$ is wildly ramified, this implies
that $L/K$ has no non-trivial intermediate extensions, hence
$G$ is cyclic (of prime order) and $[v_L(\D)/p]=1$. But
then $(v_L(\D)+[v_L(\D)/p])/e\le 2=2[v_K(\D)]$. 
\end{proof}

\begin{remark} When $\car(K)=0$, one has 
(\cite{Sen}, Theorem 3)
\[\expo(\rH^1(G, \cO_L))\le v_K(p)/(p-1).\]
\end{remark} 

For any $\cO_K$-module $F$ and any $N\ge 0$, we will denote 
$F_N = F/\pi^{N+1}F$.  
Let $M$ be a semi-linear $\cO_L[G]$-module flat over 
$\cO_L$. Let $N\ge 0$. We want to compare 
$(M_{N})^G$ with $(M^G)_N$. For all $m\ge N$, we 
have canonical morphisms of $\cO_K$-modules 
\[\xymatrix{ 
(M^G)_m \ar@{^{(}->}[r] \ar@{->>}[d] & (M_m)^G \ar[d]_{f_{m,N}}\\ 
(M^G)_N \ar@{^{(}->}[r]^{f_N} & (M_N)^G. \\
}
\]

\begin{proposition}\label{2.1.4} Let $M$ be a semi-linear 
$\cO_L[G]$-module, flat over $\cO_L$. Consider the integer $h=2[v_K(\D_{L/K})]\ge 0$. Then 
for all $N\ge 0$ 
and for all  $m\ge N+h$,  
$(M^G)_N$ is determined by the $G$-module 
$M_m$. More precisely, $f_N$ induces an isomorphism 
\[(M^G)_N \to \Img f_{m,N}\simeq (M_m)^G/(\pi^{N+1}).\]
\end{proposition}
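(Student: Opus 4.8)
The plan is to determine $\Img f_{m,N}$ by a direct cocycle computation, identify it with $(M^G)_N$ via $f_N$, and read off the stated description; the only external input needed is the annihilation statement $\pi^{h}\rH^1(G,M)=0$ coming from Proposition~\ref{2.1.6}(2). Since $M$ is flat, hence torsion-free, over $\cO_L$ and $\pi\in\cO_L$, the module $M$ is $\pi$-torsion-free, a fact used throughout. First, $f_N$ is injective: if $x\in M^G$ lies in $\pi^{N+1}M$, write $x=\pi^{N+1}y$ with $y\in M$ (unique by $\pi$-torsion-freeness); then $\pi^{N+1}(g(y)-y)=g(x)-x=0$ for all $g\in G$, so $g(y)=y$, whence $y\in M^G$ and $x\in\pi^{N+1}M^G$. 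Second, $\Img f_N\subseteq\Img f_{m,N}$ for every $m\ge N$: this is immediate from the commutativity of the displayed square together with the surjectivity of $(M^G)_m\to(M^G)_N$, i.e.\ from the fact that a class in $M^G$ is $G$-invariant already modulo $\pi^{m+1}M$.

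The substantive step, which I expect to be the main obstacle, is the reverse inclusion $\Img f_{m,N}\subseteq\Img f_N$ when $m\ge N+h$. Given $\bar x\in(M_m)^G$, lift it to $x\in M$ and set $c_g:=g(x)-x\in\pi^{m+1}M$; then $g\mapsto c_g$ is a $1$-cocycle. Because $m+1\ge(N+1)+h$ and $M$ is $\pi$-torsion-free, one can divide and write $c_g=\pi^{N+1}d_g$ with $d_g\in\pi^{h}M$; dividing the cocycle identity $c_{gh}=c_g+g(c_h)$ by $\pi^{N+1}$ shows $g\mapsto d_g$ is again a $1$-cocycle. Writing $d_g=\pi^{h}e_g$, the class of $g\mapsto d_g$ in $\rH^1(G,M)$ equals $\pi^{h}$ times the class of $g\mapsto e_g$, hence vanishes by Proposition~\ref{2.1.6}(2). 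Thus $d_g=g(\delta_0)-\delta_0$ for some $\delta_0\in M$, and $x^{*}:=x-\pi^{N+1}\delta_0$ then satisfies $g(x^{*})-x^{*}=c_g-\pi^{N+1}d_g=0$, so $x^{*}\in M^G$ and $x^{*}\equiv x\pmod{\pi^{N+1}M}$. Since $f_{m,N}(\bar x)$ is the class of $x$ in $M_N$, it coincides with the class of $x^{*}$, which lies in $\Img f_N$. The delicate point is precisely this controlled division of a coboundary by $\pi^{N+1}$: it is what lets the bound $\pi^{h}\rH^1(G,M)=0$ do its work, and it is also why a more naive route — trying to prove that $(M_m)^G\to (M_{m-N-1})^G$ is surjective, so that $\ker f_{m,N}=\pi^{N+1}(M_m)^G$ — does not work, as that map need not be surjective.

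Combining the two inclusions yields $\Img f_{m,N}=\Img f_N$ for all $m\ge N+h$. As $f_N$ is injective, it restricts to an isomorphism $(M^G)_N\xrightarrow{\;\sim\;}\Img f_{m,N}$; and $\Img f_{m,N}$ is, by the very definition of $f_{m,N}$, the image of $(M_m)^G$ under reduction modulo $\pi^{N+1}$, i.e.\ the object written $(M_m)^G/(\pi^{N+1})$ in the statement. In particular, an isomorphism of $G$-modules $M_m\simeq M_m'$ induces $(M_m)^G\simeq (M_m')^G$, hence (reducing modulo $\pi^{N+1}$) $(M^G)_N\simeq (M'^G)_N$, which is the asserted determination. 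This is the proof I would carry out.
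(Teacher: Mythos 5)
Your proposal is correct and is essentially the paper's own argument: the authors run the same reasoning through the commutative diagram of long exact cohomology sequences attached to $0\to M\xrightarrow{\cdot\pi^{m+1}}M\to M_m\to 0$ and $0\to M\xrightarrow{\cdot\pi^{N+1}}M\to M_N\to 0$, showing $\Delta_N\circ f_{m,N}=\pi^{m-N}\Delta_m=0$ once $m-N\ge\expo(\rH^1(G,M))$, which Proposition~\ref{2.1.6} bounds by $h$. Your explicit cocycle manipulation (lift, divide by $\pi^{N+1}$, kill the class by $\pi^{h}\rH^1(G,M)=0$, correct the lift by the resulting coboundary) is precisely the unwinding of that connecting-homomorphism computation, so there is no substantive difference.
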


\begin{proof} The above diagram implies that 
$\Img f_{N}\subseteq \Img f_{m, N}$. 
It remains to show the inverse inclusion.
Let us consider the following commutative diagram with horizontal exact sequen\-ces: 
\[\xymatrix{ 
  0  \ar[r] & M \ar[d]_{\cdot\pi^{m-N}} \ar[r]^{\cdot\pi^{m+1}} & M\ar[d]_{\mathrm{id}} \ar[r] & M_m \ar[d]_{} \ar[r]^{} & 0  \\
  0 \ar[r] & M \ar[r]^{\cdot\pi^{N+1}} & M \ar[r]^{} & M_N \ar[r]^{} & 0  
}
\]
where $\cdot$ means the multiplication and the maps in the rows are the 
canonical surjection. Then we  have the following diagram of  long 
exact sequences by taking group cohomology:
\[\xymatrix{
(M^G)_m \ar[d]_{\mathrm{id}} \ar[r]^{f_{m}} & (M_m)^G \ar[d]_{f_{m,N}}
  \ar[r]^{\Delta_m} & \mathrm{H}^1(G,M)\ar[d]_{\cdot\pi^{m-N}} \ar[r]^{\cdot \pi^{m+1}}& \rH^1(G,M)\ar[d]_{\mathrm{id}} \\
(M^G)_N \ar[r]^{f_{N}} & (M_N)^G \ar[r]^{\Delta_N} & \mathrm{H}^1(G,M) \ar[r]^{\cdot \pi^{N+1}} &
  \rH^1(G,M).}
\]
We see that $\mathrm{Im} f_{m,N}\subseteq \mathrm{Im} f_{N}$ if and only
if $\Delta_N f_{m,N}=\pi^{m-N}\Delta_m=0$. This happens when  
$m-N\ge \expo(\mathrm{H}^1(G,M))$. The latter inequality 
is true by Proposition \ref{2.1.6}. 
\end{proof}

\begin{remark}\label{bound-diff} 
(1) If $L/K$ is tamely ramified, then $h=0$ and we get the well-known
equality $(M_N)^G=(M^G)_N$. 
\smallskip 

(2) Suppose $\mathrm{char}(K)=0$ and
$p>0$. 
Then the ramification filtration of $G$ has length at most 
$v_L(p)/(p-1)$ (\cite{Serre}, IV.2, Exercise 3c). 
Hence $v_{L}(\D_{L/K})\leq |G|v_L(p)/(p-1)$ by \cite{Serre}, 
IV.1, Proposition 4, and we get
\[v_K(\D_{L/K})\le |G|v_K(p)/(p-1).\] 
When $p^2\mid |G|$ or $v_K(p)\ge p-1$, a better bound is given by Lenstra (\cite{Jav},
4.1.1): \[v_K(\D_{L/K})\le v_K(|G|)+{(e_{L/K}-1)}/{e_{L/K}}.\]  
\end{remark}

\begin{corollary} Under the hypothesis of Proposition~\ref{2.1.4}, 
the canonical homomorphism
\[ \varprojlim_n (M^G)_n \to \varprojlim_n (M_n)^G
=(\varprojlim_n M_n)^G\]
is an isomorphism.   
\end{corollary}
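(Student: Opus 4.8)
The plan is to reduce everything to Proposition~\ref{2.1.4} by a formal manipulation of the two inverse systems $A_n := (M^G)_n$ and $B_n := (M_n)^G$. These are linked by the maps $f_n \colon A_n \to B_n$ appearing in the commutative square preceding Proposition~\ref{2.1.4}, which shows in particular that $(f_n)_n$ is a morphism of inverse systems; the induced map on inverse limits is precisely the canonical homomorphism in the statement, once we use the identification $\varprojlim_n B_n = (\varprojlim_n M_n)^G$ (valid because both $(-)^G$ and $\varprojlim$ are limits and hence commute). So it suffices to prove that $\varprojlim_n f_n$ is bijective.

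First I would isolate two facts. The first is that each $f_N$ is \emph{injective}: if $x \in M^G$ lies in $\pi^{N+1}M$, write $x = \pi^{N+1}y$ with $y \in M$; since $\pi \in K$ we have $\pi^{N+1}(g(y)-y) = 0$ for all $g \in G$, and flatness (hence torsion-freeness) of $M$ over $\cO_L$ forces $g(y) = y$, so $y \in M^G$ and $x \in \pi^{N+1}M^G$. The second is Proposition~\ref{2.1.4} rephrased: for every $N \ge 0$ and every $m \ge N + h$ (where $h = 2[v_K(\D_{L/K})]$), the transition map $f_{m,N} \colon B_m \to B_N$ has image equal to $\Img f_N$.

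Injectivity of $\varprojlim_n f_n$ is then immediate: a compatible family $(a_n)_n$ killed by all the $f_n$ has $a_n = 0$ for every $n$ by injectivity of $f_n$. For surjectivity, let $(b_n)_n \in \varprojlim_n B_n$. Fixing $n$ and taking $m = n+h$, compatibility of the family gives $b_n = f_{n+h,n}(b_{n+h}) \in \Img f_{n+h,n} = \Img f_n$, so by injectivity of $f_n$ there is a unique $a_n \in A_n$ with $f_n(a_n) = b_n$. To check that $(a_n)_n$ is itself a compatible family, I would apply the naturality square $f_{n+1,n} \circ f_{n+1} = f_n \circ (A_{n+1} \to A_n)$: both the image of $a_{n+1}$ in $A_n$ and $a_n$ are sent to $b_n$ by the injective map $f_n$, hence coincide. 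Thus $(a_n)_n \in \varprojlim_n A_n$ maps to $(b_n)_n$, which finishes the proof.

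The argument is essentially formal once Proposition~\ref{2.1.4} is available; the only point requiring a little care is the compatibility of the chosen lifts $a_n$, which is forced by the uniqueness coming from injectivity of the $f_n$ — and that injectivity is the one place where flatness of $M$ over $\cO_L$ is genuinely used. No real obstacle is expected here.
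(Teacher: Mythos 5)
Your proof is correct and is exactly the argument the paper intends: the corollary is stated without proof as a formal consequence of Proposition~\ref{2.1.4}, and your verification (injectivity of each $f_N$, already recorded by the hooked arrow in the paper's diagram, plus the stabilization $\Img f_{m,N}=\Img f_N$ for $m\ge N+h$) is the natural way to fill it in. No issues.
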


\end{section}

\begin{section}{Minimal Weierstrass models}\label{minimal-w} 

We compare in this section the minimal Weierstrass model of $E$ over
$\cO_K$ with that of $E_L$ over $\cO_L$. 

\begin{definition} Let $E$ be an elliptic curve over the field of
fractions of a principal ideal domain $R$ (e.g. $R=\cO_K$ or
$\cO_L$). 
A \emph{Weierstrass model} of $E$ over $R$ is a triplet 
consisting in (1) a scheme $\cW$ proper and flat over $R$  
with geometrically integral fibers; (2) an isomorphism 
$\cW_{\eta}\to E$ where $\cW_\eta$ denotes the generic fiber
of $\cW$ ; and (3) a section $\epsilon\in \cW(R)$ in the
smooth locus of $\cW$ and whose generic fiber is mapped to 
the origin of $E$ by the isomorphism of (2).  To lighten the
notation, we will often omit the isomorphism $\cW_\eta\to E$. 
\end{definition}

Let us recall the correspondence between Weierstrass models and 
Weierstrass equations (\cite{Deligne-T}, \S 1 or \cite{Liu}, \S 9.4.4). 
Let  $\cW$ be a Weierstrass model of $E$ over $R$.  
Consider the invertible sheaf $\cO_{\cW}(n\epsilon)$ on $\cW$. 
Let $L(n\epsilon)=\Gamma(\cW,\cO_{\cW}(n\epsilon))$. 
For all $n\ge 1$, $L(n\epsilon)$ is free of rank $n$ and 
$L((n+1)\epsilon)/L(n\epsilon)$ is free of rank $1$. 
Let $\{1, x\}$,  $\{1, x, y\}$ be respective bases of 
$L(2\epsilon)$ and $L(3\epsilon)$. 
Scaling $x, y$ by suitable units of $R$, we get a relation
\begin{equation} \label{eqK} 
y^2+ (a_{1}x+a_{3})y=x^3+a_{2}x^2+a_{4}x+a_{6}
\end{equation} 
which (after homogenization) defines $\cW$ as a closed subscheme
of ${\mathbb P}^2_{R}$. 
We will call such a triplet $\{1, x, y\}$ a \emph{Weierstrass basis}
of $\cW$. 
Denote by $\Delta(\cW)\in R\setminus \{ 0\}$ the discriminant of the above equation. 

\begin{definition} 
If the ideal $\Delta(\cW)R$ is the biggest possible among all
Weierstrass models of $E$, then $\cW$ 
will be called a \emph{minimal Weierstrass model of $E$}.
Such a model 
exists and is unique up to isomorphism (\cite{Liu}, 9.4.35
or \cite{sil}, VII.1.3.).  
\end{definition}

The notion of Weierstrass model depends {\it a priori} on the 
origin of $E$. However, the next proposition says that, up to 
isomorphism, the choice of the origin does not matter. See also
discussions in \S \ref{G-invariant}. 

\begin{proposition}\label{choice-of-o} Let $(E, e)$ be an
elliptic curve over $K$ with minimal Weierstrass model 
$\cW$ over $\cO_K$. 
\begin{enumerate}[{\rm (1)}] 
\item Let $q\in E(K)$ and let $\cZ$ be the minimal Weierstrass
model of $(E, q)$ over $\cO_K$. Then there exists an isomorphism
$\cW\simeq \cZ$ which maps $e$ to $q$. 
\item Let $N\ge 0$ and let $\epsilon_N$ be the section of $\cW_N$ induced
by $e$. Let $\bar{q}\in \cW_N$ be a section contained in the 
smooth locus. Then there 
exists an isomorphism (not unique) 
$\cW_N\to \cW_N$ which maps $\bar{q}$ to $\epsilon_N$. 
\end{enumerate} 
\end{proposition}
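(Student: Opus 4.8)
The plan is to prove both parts by exploiting the group structure on the elliptic curve and its Néron model, translating the change-of-origin into an automorphism. For part (1), the key observation is that $(E,e)$ and $(E,q)$ are isomorphic as pointed elliptic curves via translation by $q$: the map $t_{q}\colon E\to E$, $P\mapsto P+q$, is an isomorphism of varieties sending $e$ to $q$. Since the minimal Weierstrass model is a functorial invariant of a \emph{pointed} elliptic curve, unique up to unique isomorphism, the isomorphism $t_{q}$ of generic fibers extends to an isomorphism $\cW\xrightarrow{\sim}\cZ$ of the respective minimal Weierstrass models; by construction this extension carries the section $e$ of $\cW$ to the section $q$ of $\cZ$. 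The only point requiring a word of care is that $t_{q}$ really is an isomorphism of $E$ as a $K$-variety (forgetting the group law it is just an automorphism of the underlying curve) and that the discriminant ideal is unchanged, so $\cZ$ built from $(E,q)$ has the same minimal discriminant as $\cW$; this is immediate since $\cW$ with its generic fiber reidentified via $t_{q}$ is itself a Weierstrass model of $(E,q)$ with the same discriminant ideal, and minimality is preserved.

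For part (2), I would first reduce to lifting $\bar q$ to a genuine section. Let $\mathcal N\to\Spec\cO_K$ be the Néron model of $E$; the smooth locus $\cW^{\mathrm{sm}}$ is an open subscheme of $\mathcal N$ (indeed $\cW^{\mathrm{sm}}$ is the identity component together with possibly more, depending on reduction type — in any case it is smooth and separated, so its sections extend uniquely from any thickening by the Néron property applied to the complete local ring, or simply by smoothness and Hensel). Thus the given section $\bar q\in\cW_N^{\mathrm{sm}}$, valued in $\cO_K/\pi^{N+1}$, lifts — possibly after noting it already lies in the smooth locus and $\cO_K/\pi^{N+1}$ is a quotient of $\cO_K$ — to a point $q_N$; more to the point, the group scheme structure on $\cW_N^{\mathrm{sm}}\supseteq \mathcal N_N$ gives us translation by $\bar q$ directly at the level of the $N$-th infinitesimal fiber, without needing to lift. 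Concretely, $\mathcal N_N$ is a smooth group scheme over $\cO_K/\pi^{N+1}\cO_K$ with identity section $\epsilon_N$, and $\bar q$ is a section of it; translation $t_{\bar q}$ by $\bar q$ is then an automorphism of $\mathcal N_N$ as a scheme.

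The remaining step is to extend this automorphism of (an open part of) the special-fiber thickening to the whole of $\cW_N$. Here I would argue that $t_{\bar q}$, being translation on the Néron model, is induced by the translation automorphism $t_q$ of $E$ for any lift $q\in E(K)$ of $\bar q$ (such a lift exists after possibly replacing $\cO_K$ by its completion, or directly: a smooth $\cO_K$-scheme with a point in the residue field lifts that point by Hensel, and we may arrange the reduction to be $\bar q$ modulo $\pi^{N+1}$ by further approximation, or invoke that $E(K)$ is dense in $E(K)/\!\!\sim$ at each level). By part (1) applied to this $q$, the translation $t_q$ extends to an isomorphism $\cW\to\cZ$ where $\cZ$ is the minimal Weierstrass model of $(E,q)$; but $(E,q)$ and $(E,e)$ have \emph{the same} underlying curve and $t_q$ is an automorphism of that curve, so $\cZ$ and $\cW$ are canonically identified as $\cO_K$-schemes (both equal the minimal Weierstrass model of the abstract curve $E$ with a chosen point), and $t_q$ thereby becomes an automorphism $\cW\to\cW$ sending $e$ to $q$, hence sending $\epsilon_N$ to the reduction of $q$, which is $\bar q$. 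Reducing modulo $\pi^{N+1}$ and composing with the inverse gives the desired (non-unique, since $E$ may have extra automorphisms and the lift $q$ is not unique) isomorphism $\cW_N\to\cW_N$ carrying $\bar q$ to $\epsilon_N$.

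The main obstacle I anticipate is the lifting issue in part (2): making rigorous that a smooth-locus section $\bar q$ over $\cO_K/\pi^{N+1}\cO_K$ is the reduction of a genuine point $q\in E(K)$ (or at least of a point over some unramified or complete base over which nothing is lost). This is where one must be careful to use smoothness of $\cW^{\mathrm{sm}}$ (equivalently, the Néron model) together with Hensel's lemma for $\cO_K$ — or its completion, harmlessly, since the statement is geometric over the special fiber — rather than anything subtler; once the lift exists, everything else is a formal consequence of part (1) and the functoriality of minimal Weierstrass models. An alternative, avoiding lifting altogether, is to work entirely with the finite-type group scheme $\mathcal N_N$ over $\cO_K/\pi^{N+1}\cO_K$: translation by $\bar q$ is manifestly an automorphism there, and one then needs only that this automorphism of the open smooth part extends to all of $\cW_N$ — which follows because $\cW_N$ is the closure of its generic fiber inside $\mathbb P^2$ and translation extends to a projective automorphism, or because $\cW_N$ minus its smooth locus is a single non-smooth point that must be preserved. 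I would present whichever of these two routes turns out cleaner, but flag the lifting/extension step as the one place where genuine (if standard) input is needed.
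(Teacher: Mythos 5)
Your proposal is correct and follows essentially the same route as the paper: part (1) by reidentifying the generic fiber of $\cW$ via the translation and invoking uniqueness of the minimal Weierstrass model, and part (2) by passing to the completion, lifting $\bar q$ through the smooth locus via Hensel/smoothness to a point $q\in E(K)$, and then promoting the translation to an automorphism of $\cW$. The one place where you diverge is the last step: the paper extends $t$ to $\cW$ by the N\'eron mapping property (getting $\mathcal E^0\to\mathcal E^0$ since $q$ specializes into the smooth locus $=\mathcal E^0$) and then across the codimension-$\ge 2$ complement, concluding by normality; you instead reuse part (1) and assert that $\cZ$ and $\cW$ are ``canonically identified''. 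That identification is the point deserving care: it amounts to saying that $(\cW,\overline{\{q\}})$ is itself a \emph{minimal} Weierstrass model of $(E,q)$, which holds precisely because $\overline{\{q\}}$ lies in the smooth locus, i.e.\ meets the same component of the regular model as $\epsilon$ (for type $\mathrm I_n$ a section through a different component would yield a genuinely different contraction); once you note this, your argument closes, but the paper's N\'eron-model extension avoids the issue altogether.
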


\begin{proof}(1) Let $t : E\to E$ be an isomorphism which maps $e$ to
$q$. Then $\cZ$ endowed with the isomorphism 
$\cZ_\eta\simeq E \overset{t}{\to} E$ 
is a minimal Weierstrass model of $(E, e)$. By the uniqueness property,
we get an isomorphism $\cW\to \cZ$ as desired.  

(2) As $\cO_K$ and $\who_K$ coincide modulo $\pi^{N+1}$, we
can suppose $\cO_K$ is complete. We can lift $\bar{q}$ to a rational 
point $q\in E(K)$. Let $t : E\to E$ be as in (1). Let $\mathcal{E}^0$
be the identity component of the N\'eron model $\mathcal{E}$ of $E$. 
It is equal to the smooth locus of $\cW$. By the universal property
of $\mathcal E$, $t$ extends to a morphism $\mathcal{E}^0\to \mathcal{E}$. 
As $t(e)\in \mathcal{E}^0$, $t$ is actually a morphism
$\mathcal{E}^0\to \mathcal{E}^0\subseteq \cW$. As 
$\cW\setminus \mathcal{E}^0$ has codimension $\ge 2$ in 
$\cW$, $t$ extends to a finite birational morphism $\cW\to \cW$. 
It is an isomorphism because $\cW$ is normal. 
\end{proof}

From now on $\cW$ will denote the {\bf minimal Weierstrass model} of $E$ 
over $\cO_K$. The next lemma can be proved by direct computations using 
\cite{sil}, VII.1.3(d) and  \cite{Deligne-T}, (1.6).

\begin{lemma}\label{2.2.2} Fix a Weierstrass basis  $\{1, x, y\}$ of $\cW$. 
\begin{enumerate}[{\rm (1)}] 
\item Let $w, z\in L(3\epsilon)$. Then $\{1, w, z\}$ is a Weierstrass
basis of $\cW$ if and only if $\{1, w\}$ is a basis of $L(2\epsilon)$, 
$z\in L(3\epsilon)\setminus L(2\epsilon)$ and $z^2-w^3\in L(5\epsilon)$. 
\item The set $\{ 1, w, z\}$ is a Weierstrass basis of some 
Weierstrass model $\cZ$ if and only if 
$w\in L(2\epsilon)\setminus \cO_K$, $z\in L(3\epsilon)\setminus L(2\epsilon)$, 
$z^2-w^3\in L(5\epsilon)$ and $z\in \cO_K+\cO_K.w+\cO_K.y$. 
\item Under the above condition, $w=u^2x+r$, $z=u^3y+u^2sx+t$ for some
$u, r, s, t\in \cO_K$ and 
we have $\Delta(\cZ)=u^{12}\Delta(\cW)$. 
\end{enumerate}
\end{lemma}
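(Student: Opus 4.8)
The plan is to reduce the whole statement to the classical dictionary between Weierstrass models and Weierstrass equations, together with the transformation formulas of \cite{sil}, VII.1.3 and \cite{Deligne-T}, (1.6). Recall that two Weierstrass coordinate systems of $E$ over $K$ are related by $x=u^2x'+r$, $y=u^3y'+su^2x'+t$ with $u\in K^*$ and $r,s,t\in K$, that discriminants transform by $\Delta=u^{12}\Delta'$, and --- this is the point where minimality of $\cW$ enters --- that if $\cZ$ is \emph{any} Weierstrass model of $E$ over $\cO_K$, then a Weierstrass basis $\{1,w,z\}$ of $\cZ$ is obtained from the fixed basis $\{1,x,y\}$ of $\cW$ by such a transformation with $u,r,s,t\in\cO_K$ (\cite{sil}, VII.1.3(d); \cite{Liu}, 9.4.35). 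Before starting I would also record the elementary facts that the class of $x$ generates $L(2\epsilon)/\cO_K$, the class of $y$ generates $L(3\epsilon)/L(2\epsilon)$, hence the classes of $x^2$, $xy$, $x^3$ generate the successive quotients of $L(4\epsilon)\supset L(3\epsilon)$, $L(5\epsilon)\supset L(4\epsilon)$, $L(6\epsilon)\supset L(5\epsilon)$, and that $y^2-x^3\in L(5\epsilon)$.

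For (1), the implication ``$\Rightarrow$'' is a direct unwinding of the definition of a Weierstrass basis: $\{1,w\}$ is a basis of $L(2\epsilon)$ by construction, $z$ has pole order exactly $3$ at $\epsilon$ so $z\in L(3\epsilon)\setminus L(2\epsilon)$, and from the defining equation one reads off $z^2-w^3=-(a_1w+a_3)z+a_2w^2+a_4w+a_6\in L(5\epsilon)$. For ``$\Leftarrow$'', the one non-formal point is to upgrade ``$z\notin L(2\epsilon)$'' to ``the class of $z$ generates $L(3\epsilon)/L(2\epsilon)$''. Writing $w=\alpha x+\beta$ with $\alpha\in\cO_K^*$ (since $\{1,w\}$ is a basis of $L(2\epsilon)$) and $z=p+cy$ with $p\in L(2\epsilon)$ and $c\in\cO_K\setminus\{0\}$, one computes, using $y^2-x^3\in L(5\epsilon)$, that $z^2-w^3\equiv(c^2-\alpha^3)x^3\pmod{L(5\epsilon)}$; since the class of $x^3$ generates $L(6\epsilon)/L(5\epsilon)$, the hypothesis $z^2-w^3\in L(5\epsilon)$ forces $c^2=\alpha^3$, hence $c\in\cO_K^*$, hence $z$ generates $L(3\epsilon)/L(2\epsilon)$. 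Then $\{1,w,z\}$, $\{1,w,z,w^2\}$, $\{1,w,z,w^2,wz\}$ are $\cO_K$-bases of $L(3\epsilon)$, $L(4\epsilon)$, $L(5\epsilon)$, and expanding $z^2-w^3$ in the last one produces a Weierstrass equation with $\cO_K$-coefficients; as this equation is assembled from bases of the modules $L(n\epsilon)$ of $\cW$, it is the equation cutting out $\cW$ under the closed embedding defined by $\cO_\cW(3\epsilon)$ relative to the basis $\{1,w,z\}$, so $\{1,w,z\}$ is a Weierstrass basis of $\cW$.

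For (2) and (3): ``$\Rightarrow$'' follows from the transformation fact recorded above, applied to $\cZ$ against $\cW$, giving $w=u^2x+r$, $z=u^3y+su^2x+t$ with $u,r,s,t\in\cO_K$; then $w\in\cO_K+\cO_Kx=L(2\epsilon)$ with $w\notin\cO_K$ (as $u\neq0$), $z\in\cO_K+\cO_Kx+\cO_Ky=L(3\epsilon)$ with $z\notin L(2\epsilon)$, $z=(t-sr)+sw+u^3y\in\cO_K+\cO_Kw+\cO_Ky$ after substituting $u^2x=w-r$, and $z^2-w^3\in L(5\epsilon)$ from the Weierstrass equation of $\cZ$ as in (1). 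For ``$\Leftarrow$'', write $w=\alpha x+\beta$ with $\alpha\in\cO_K\setminus\{0\}$ (from $w\in L(2\epsilon)\setminus\cO_K$) and, using $z\in\cO_K+\cO_Kw+\cO_Ky$ and $z\notin L(2\epsilon)$, $z=p+cy$ with $p\in L(2\epsilon)$ and $c\in\cO_K\setminus\{0\}$; exactly as above $z^2-w^3\in L(5\epsilon)$ forces $c^2=\alpha^3$. Then $2v_K(c)=3v_K(\alpha)$ shows $v_K(\alpha)=2m$ is even, and writing $\alpha=\pi^{2m}\alpha_0$, $c=\pi^{3m}c_0$ with $\alpha_0,c_0\in\cO_K^*$ and $c_0^2=\alpha_0^3$ gives $\alpha_0=(c_0/\alpha_0)^2$; so $u:=\pi^m c_0/\alpha_0\in\cO_K$ satisfies $u^2=\alpha$, $u^3=c$, which puts $w,z$ in the normal form of (3) with $u,r,s,t\in\cO_K$. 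Substituting $x=u^{-2}(w-r)$, $y=u^{-3}(z-s(w-r)-t)$ into the equation of $\cW$ and clearing $u^6$ yields a Weierstrass equation over $\cO_K$, which defines a Weierstrass model $\cZ$ with Weierstrass basis $\{1,w,z\}$; and $\Delta(\cZ)=u^{12}\Delta(\cW)$ is the discriminant formula of \cite{sil}, VII.1.3(c). This settles (2) and (3) at once.

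I expect the only genuinely delicate step to be the argument, needed in the ``$\Leftarrow$'' directions of both (1) and (2), that jointly using $\{1,w\}$ (resp. $w$) in $L(2\epsilon)$, $z\notin L(2\epsilon)$ and $z^2-w^3\in L(5\epsilon)$ produces the relation $c^2=\alpha^3$ between leading coefficients, and hence the scalar $u\in\cO_K$ with $u^2=\alpha$, $u^3=c$: this is precisely the place where the three conditions of the lemma are not individually sufficient and where the discrete valuation ring structure of $\cO_K$ is used. Everything else is routine bookkeeping with the Weierstrass dictionary.
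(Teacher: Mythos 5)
Your proof is correct and follows exactly the route the paper indicates: the paper gives no written argument for this lemma beyond the remark that it follows by direct computation from \cite{sil}, VII.1.3(d) and \cite{Deligne-T}, (1.6), and your write-up is precisely that computation, including the one non-formal point (extracting $u\in\cO_K$ with $u^2=\alpha$, $u^3=c$ from the leading-coefficient relation $c^2=\alpha^3$ forced by $z^2-w^3\in L(5\epsilon)$). Nothing to add.
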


Let $L/K$ be a finite Galois extension of $K$ with Galois group 
$G$. Let $\cW'$ be the minimal Weierstrass model of $E_{L}$ over 
$\cO_{L}$ and let $\epsilon'\subset \cW'(\cO_L)$ be the closure in $\cW'$ 
of the origin of $E_{L}$. 
As we saw before, for all $n\ge 1$, $L(n\epsilon')$ is free of rank $n$ and 
$L((n+1)\epsilon')/L(n\epsilon')$ is free of rank $1$ over $\cO_{L}$.
Similarly, $L(n\epsilon')^G$ is 
free of rank $n$ over $\cO_K$ 
and the quotient $L((n+1)\epsilon')^G/(L(n\epsilon')^G)$ 
is free of rank $1$ over $\cO_K$. 

\begin{lemma}\label{2.2.6} The Galois group $G$ acts on the 
$\cO_{K}$-scheme $\cW'$ and induces a semi-linear $G$-action 
{\rm ({\it cf}. $\S 2$)} on $L(n\epsilon')$ for all $n\in\mathbb N$. Moreover,
a subset $\{1, w, z\}\subset L(3\epsilon')$ is a Weierstrass basis 
of some Weierstrass model of $E$ over $\cO_K$ if and only 
if 
\[w\in L(2\epsilon')^G \setminus \cO_K, \quad  z\in L(3\epsilon')^G\setminus L(2\epsilon')^G\]
and 
\[z^2-w^3\in L(5\epsilon'), \quad z\in \cO_L+\cO_Lw+\cO_L y'.\] 
\end{lemma}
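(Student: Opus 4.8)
The plan is to prove Lemma~\ref{2.2.6} by combining Lemma~\ref{2.2.2} with the results on semi-linear actions from \S\ref{semi-linear}. The semi-linearity of the $G$-action on each $L(n\epsilon')$ is essentially formal: $G$ acts on the $\cO_K$-scheme $\cW'$ (by functoriality of the minimal Weierstrass model and the fact that $\epsilon'$, being the closure of a $G$-stable point of $E_L$, is $G$-stable), hence on the invertible sheaf $\cO_{\cW'}(n\epsilon')$, and therefore on global sections $L(n\epsilon')$; the compatibility $g(a\cdot s) = g(a)\cdot g(s)$ for $a\in\cO_L$ and $s\in L(n\epsilon')$ is exactly the statement that the action on sections covers the action on the base, so $L(n\epsilon')$ is a semi-linear $\cO_L[G]$-module as defined in \S\ref{semi-linear}. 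It is flat (indeed free) over $\cO_L$, so Proposition~\ref{different}(1) applies and gives that $L(n\epsilon')^G$ is free over $\cO_K$ of rank $n$, with the successive quotients free of rank~$1$, as already recorded in the text preceding the lemma.

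For the main equivalence, I would argue as follows. First suppose $\{1,w,z\}$ is a Weierstrass basis of some Weierstrass model $\cZ$ of $E$ over $\cO_K$. Applying Lemma~\ref{2.2.2}(2) to $\cZ$ (after fixing a Weierstrass basis $\{1,x,y\}$ of $\cW$ so that $L(n\epsilon) = L(n\epsilon')^G$, using Proposition~\ref{different}(1) to identify $L(n\epsilon)$ with the $G$-invariants of $L(n\epsilon')$ via base change $\cO_L\otimes_{\cO_K}L(n\epsilon)\hookrightarrow L(n\epsilon')$), we get $w\in L(2\epsilon)\setminus\cO_K = L(2\epsilon')^G\setminus\cO_K$, $z\in L(3\epsilon)\setminus L(2\epsilon) = L(3\epsilon')^G\setminus L(2\epsilon')^G$, $z^2-w^3\in L(5\epsilon)\subseteq L(5\epsilon')$, and $z\in\cO_K + \cO_K w + \cO_K y\subseteq \cO_L + \cO_L w + \cO_L y'$ (the last inclusion since $y\in\cO_K+\cO_K x+\cO_K y$ lies in $\cO_L + \cO_L w + \cO_L y'$, or more directly since $L(3\epsilon) = \cO_K\oplus\cO_K x\oplus\cO_K y$ and one expresses $x$ in terms of $w$). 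This gives the four stated conditions.

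Conversely, suppose $w\in L(2\epsilon')^G\setminus\cO_K$, $z\in L(3\epsilon')^G\setminus L(2\epsilon')^G$, $z^2-w^3\in L(5\epsilon')$, and $z\in\cO_L + \cO_L w + \cO_L y'$. By Proposition~\ref{different}(1), $w$ and $z$ lie in $L(2\epsilon')^G = L(2\epsilon)\otimes$-part, i.e. we may regard $w\in L(2\epsilon)$ and $z\in L(3\epsilon)$ (working over $\cO_K$); then $w\notin\cO_K$ means $\{1,w\}$ is an $\cO_K$-basis of $L(2\epsilon)$, and $z\notin L(2\epsilon')^G = L(2\epsilon)$ means $z\in L(3\epsilon)\setminus L(2\epsilon)$, so $\{1,z\}$ extends to a basis $\{1,w,z\}$ of $L(3\epsilon)$. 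The relation $z^2-w^3\in L(5\epsilon')\cap L(6\epsilon) = L(5\epsilon)$ (intersecting inside $L(6\epsilon')$, again via Proposition~\ref{different}(1) applied to $L(5\epsilon')$ and $L(6\epsilon')$) together with $z\in\cO_L + \cO_L w + \cO_L y'$, which descends to $z\in\cO_K + \cO_K w + \cO_K y$ because $z$ is $G$-invariant and the displayed module is the $G$-invariants of $\cO_L + \cO_L w + \cO_L y'$, puts us exactly in the situation of Lemma~\ref{2.2.2}(2) for the basis $\{1,x,y\}$ of $\cW$. Hence $\{1,w,z\}$ is a Weierstrass basis of some Weierstrass model $\cZ$ of $E$ over $\cO_K$.

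The main obstacle, and the point requiring care, is the bookkeeping that identifies $L(n\epsilon')^G$ with $L(n\epsilon)$ and $L(n\epsilon')^G\cap($ something over $\cO_L)$ with the corresponding $\cO_K$-module — that is, checking that the various intersections and the condition $z\in\cO_L+\cO_L w+\cO_L y'$ ``descend'' correctly to statements over $\cO_K$. This is where Proposition~\ref{different}(1) (injectivity of $\cO_L\otimes_{\cO_K}M^G\to M$, i.e. that taking $G$-invariants commutes with the relevant intersections inside the flat modules $L(n\epsilon')$) does the work; everything else is a direct translation through Lemma~\ref{2.2.2}.
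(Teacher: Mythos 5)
Your proposal hinges on the identification $L(n\epsilon)=L(n\epsilon')^G$, which you attribute to Proposition~\ref{different}(1). This identification is false in general, and the gap is not a technicality: Proposition~\ref{different}(1) only gives injectivity of $\cO_L\otimes_{\cO_K}M^G\to M$ and equality of ranks, while the inclusion $L(n\epsilon)\subseteq L(n\epsilon')^G$ is strict whenever the minimal discriminants of $E$ over $\cO_K$ and of $E_L$ over $\cO_L$ differ --- this is exactly the content of Theorem~\ref{compare-m} and Corollary~\ref{bound-c}(2), which bound the (generally nonzero) torsion quotient $L(n\epsilon')^G/L(n\epsilon)$. Concretely, in Example~\ref{1.4.5} the function $u=x/(\pi\sqrt[3]{4})$ lies in $L(2\epsilon')^G$ but not in $L(2\epsilon)=\cO_K+\cO_K x$. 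Consequently your converse direction breaks at its first step: from $w\in L(2\epsilon')^G$ and $z\in L(3\epsilon')^G$ you cannot ``regard $w\in L(2\epsilon)$ and $z\in L(3\epsilon)$'', so you are not in a position to invoke Lemma~\ref{2.2.2}(2) over $\cO_K$. The auxiliary claim that $\cO_K+\cO_K w+\cO_K y$ is the $G$-invariant part of $\cO_L+\cO_L w+\cO_L y'$ is likewise unjustified (that $\cO_L$-module need not even be $G$-stable, and its invariants can be strictly larger).

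The intended argument runs through $\cO_L$ rather than $\cO_K$: the four conditions of the lemma are precisely the conditions of Lemma~\ref{2.2.2}(2) applied over $\cO_L$ to the minimal model $\cW'$ with its basis $\{1,x',y'\}$, intersected with $G$-invariance (note the last condition involves $y'$, not $y$). They therefore say that $\{1,w,z\}$ is a $G$-invariant Weierstrass basis of some Weierstrass model of $E_L$ over $\cO_L$; since the coefficients of the associated Weierstrass equation are determined by $w,z$ and hence $G$-invariant, they lie in $\cO_K$ and the model descends to a Weierstrass model of $E$ over $\cO_K$ with basis $\{1,w,z\}$. The forward direction is the same comparison in reverse: base-change $\cZ$ to $\cO_L$ and apply Lemma~\ref{2.2.2}(2)--(3) there to get $w=u^2x'+r$, $z=u^3y'+u^2sx'+t$ with $u,r,s,t\in\cO_L$, whence $z\in\cO_L+\cO_L w+\cO_L y'$; your derivation of this last inclusion via ``expressing $x$ in terms of $w$'' is circular without that step. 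Your first paragraph (semi-linearity of the $G$-action on $L(n\epsilon')$) is fine and matches the paper.
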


\begin{proof} The group $G$ acts on $E_L$ and fixes the generic fiber
of $\epsilon'$. By the uniqueness of the minimal Weierstrass model
over $\cO_{L}$, $G$ acts on $\cW'$ and semi-linearly on 
$L(n\epsilon')$ for all $n\ge 1$. 
The remaining part of the lemma results easily from Lemma~\ref{2.2.2}(2). 
\end{proof}

\begin{theorem}\label{compare-m} Let $v_L$ denote the normalized 
valuation on $L$ associated to a maximal ideal of $\cO_L$.
Let $\cW$, $\cW'$ be the 
respective minimal Weierstrass models of $E$ and $E_L$. Then
\[0\le v_L(\Delta(\cW)) - v_L(\Delta(\cW'))\le 
12(v_L(\D_{L/K})+e_{L/K}-1).\] 
\end{theorem}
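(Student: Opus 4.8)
The plan is to compare the two discriminants by passing through an intermediate Weierstrass model of $E$ over $\cO_K$ built out of $\cW'$. First I would establish the left inequality $v_L(\Delta(\cW))\ge v_L(\Delta(\cW'))$: this is the easy direction. Base changing $\cW$ to $\cO_L$ gives a Weierstrass model of $E_L$ over $\cO_L$ with discriminant $\Delta(\cW)$ (unchanged, since the defining equation has coefficients in $\cO_K\subseteq\cO_L$), and by minimality of $\cW'$ over $\cO_L$ we have $v_L(\Delta(\cW'))\le v_L(\Delta(\cW))$. So only the right inequality requires work.

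For the right inequality, the idea is to produce a Weierstrass model $\cZ$ of $E$ over $\cO_K$ whose discriminant is not too far from $\Delta(\cW')$, and then invoke minimality of $\cW$ (which forces $v_L(\Delta(\cW))\le v_L(\Delta(\cZ))$, since $v_K(\Delta(\cW))\le v_K(\Delta(\cZ))$ as $\cW$ is minimal over $\cO_K$). To build $\cZ$, fix a Weierstrass basis $\{1, x', y'\}$ of $\cW'$ over $\cO_L$, giving an equation over $\cO_L$ with discriminant $\Delta(\cW')$. Now $G$ acts semi-linearly on each $L(n\epsilon')$ by Lemma~\ref{2.2.6}, and $L(n\epsilon')^G$ is free over $\cO_K$ of rank $n$. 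The plan is to choose $w\in L(2\epsilon')^G$ and $z\in L(3\epsilon')^G$ forming a Weierstrass basis of some model $\cZ$ of $E$ over $\cO_K$ (using the criterion of Lemma~\ref{2.2.6}), and to control how much we must scale: by Proposition~\ref{different}, the different $\D_{L/K}$ kills the cokernel of $\cO_L\otimes_{\cO_K} L(n\epsilon')^G\to L(n\epsilon')$, so an element of $L(n\epsilon')$ becomes, after multiplication by a generator of $\D_{L/K}$ (valuation $v_L(\D_{L/K})$), a combination of $G$-invariant elements. Concretely, I would pick a basis of $L(2\epsilon')^G$ over $\cO_K$ of the form $\{1, w\}$ and a lift $z$ of a generator of $L(3\epsilon')^G/L(2\epsilon')^G$; then $w = \lambda x' + (\text{lower})$ and $z = \mu y' + (\text{lower})$ with $v_L(\lambda), v_L(\mu)$ bounded in terms of $v_L(\D_{L/K})$ together with the obstruction coming from $L(n\epsilon')^G$ being free over $\cO_K$ rather than $\cO_L$ (which contributes the $e_{L/K}-1$ term, accounting for the difference between the $\cO_L$- and $\cO_K$-module structures / the ramification of the residue-field-vs-valuation bookkeeping). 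By Lemma~\ref{2.2.2}(3), the change of Weierstrass basis from $\{1,x',y'\}$ to $\{1, w, z\}$ over $\cO_L$ multiplies the discriminant by $u^{12}$ where $u$ is the scaling unit/element relating the two, so $v_L(\Delta(\cZ)) = v_L(\Delta(\cW')) + 12\, v_L(u)$, and the point is $v_L(u)\le v_L(\D_{L/K}) + e_{L/K} - 1$.

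The main obstacle, and the step I expect to require the most care, is the precise bookkeeping that yields exactly $v_L(\D_{L/K}) + e_{L/K}-1$ rather than a cruder bound. One has to be careful that making the degree-$2$ generator $w$ integral and $G$-invariant, and then making the degree-$3$ generator $z$ integral, $G$-invariant, and of the form $\cO_L + \cO_L w + \cO_L y'$, can be done \emph{simultaneously} without compounding the two scalings multiplicatively in a way that would double the different. The clean way is: the $G$-invariant subspace $L(2\epsilon')^G\otim_{\cO_K}\cO_L$ sits inside $L(2\epsilon')$ with cokernel killed by $\D_{L/K}$, so there is a $G$-invariant element congruent to $x'$ up to $\D_{L/K}$; similarly for degree $3$. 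The extra $e_{L/K}-1$ enters because a generator of $L(n\epsilon')^G/L((n-1)\epsilon')^G$ over $\cO_K$, viewed in $L(n\epsilon')\otimes_{\cO_K} L = L(n\epsilon')$, may only generate the corresponding $L$-line up to index dividing the ramification — one loses at most $v_L(\pi^{(e_{L/K}-1)/e_{L/K}})$... I would instead phrase this via Lemma~\ref{disc} and the structure of $\cO_L$ over $\cO_K$. Once $v_L(u)\le v_L(\D_{L/K})+e_{L/K}-1$ is established, the inequality $v_L(\Delta(\cW))\le v_L(\Delta(\cZ)) = v_L(\Delta(\cW')) + 12 v_L(u)$ finishes the proof.
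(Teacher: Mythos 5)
Your proposal follows essentially the same route as the paper: build an auxiliary Weierstrass model $\cZ$ of $E$ over $\cO_K$ from a basis $\{1,w_0,z_0\}$ of $L(3\epsilon')^G$ adapted to the filtration, bound the scaling using Proposition~\ref{different} (which gives $v_L(a_3)\le v_L(\D_{L/K})$ for the $y'$-coefficient of $z_0$) together with a rounding loss of $e_{L/K}-1$ coming from the fact that one may only rescale by elements of $K$, and conclude by minimality of $\cW$ and Lemma~\ref{2.2.2}(3). The simultaneity worry you flag is resolved in the paper by the observation that $t=a_3^2/a_1^3\in K$, which couples the degree-$2$ and degree-$3$ scalings into a single factor $\beta_1a_1=u^2$ with $v_L(u)\le v_L(\D_{L/K})+e_{L/K}-1$, exactly the bound you predict.
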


\begin{proof} The filtration 
$\cO_K \subseteq L(2\epsilon')^G \subseteq L(3\epsilon')^G$ 
has successive quotients free of rank $1$ over $\cO_K$. 
This implies the existence of a basis $\{1, w_0, z_0\}$ of $L(3\epsilon')^G$ 
over $\cO_K$ such that $\{1, w_0\}$ is a basis of  
$L(2\epsilon')^G$ over $\cO_K$. 
Let $\{ 1, x', y'\}$ be a Weierstrass basis of $\cW'$. 
Then there exist $a_1, \dots, a_5\in \cO_L$, such that 
$w_0=a_{1}x'+a_{2}$, $z_0=a_{3}y'+a_{4}x'+a_{5}$ and 
$a_1, a_3\ne 0$. There exist $\alpha_1, \alpha_3\in K^*$ such 
that $\alpha_1 w_0, \alpha_3z_0\in L(3\epsilon)\otimes K$ 
define a Weierstrass equation of $E$ over $K$. This
implies that $t:=a_{3}^2/a_{1}^3=\alpha_1^3/\alpha_3^2\in K$.  

Now let us construct a Weierstrass model $\cZ$ 
of $E$ over $\cO_K$. If $t\in \cO_K$, set 
\[\beta_{1}=\pi^{2n}t, \ \beta_{3}=\pi^{3n}t\in\cO_K \]
where $n$ is the smallest integer such that $a_{1}|\pi^{n}$. 
If $t^{-1}\in \cO_K$, set 
\[
\beta_{1}=\pi^{2m}t^{-1}, \  \beta_{3}=\pi^{3m}t^{-2}\in\cO_K 
\]
where $m$ is the smallest non-negative integer such that 
$a_{1}|\pi^{m}t^{-1}$. 

Consider $w=\beta_1w_0\in L(2\epsilon')^G$ and $z=\beta_3z_0\in
L(3\epsilon')^G$. We have 
\[w=\beta_1a_1x'+\beta_1a_2, \quad 
z=\beta_3a_3y'+\beta_3a_4x'+\beta_3a_5.\] 
We can check that $(\beta_1a_1)^3=(\beta_3a_3)^2$, and 
$\beta_3/(\beta_1 a_1)=a_1^{-1}(\pi^nt)\in\cO_K$ if $t\in \cO_K$, 
and that $\beta_3/(\beta_1 a_1)=a_1^{-1}(\pi^m t^{-1})\in \cO_K$ otherwise.
Thus $\beta_3a_4 \in \beta_1a_1\cO_L$ and 
$z\in \cO_L+\cO_Lw+\cO_L y'$. By Lemma \ref{2.2.6}, 
$\{1, w, z\}$ is a Weierstrass basis of some Weierstrass model
$\cZ$ of $E$ over $\cO_K$. In particular
$v_K(\Delta(\cZ))\ge v_K(\Delta(\cW))$. 
 
Now let us compute $v_L(\beta_1a_1)$. We have 
\[
\beta_1a_1= \left\lbrace 
\begin{matrix}
(a_1^{-1}\pi^n)^2a_3^2 \hfill & \text{\rm if \ } t\in\cO_K \\
(a_1^{-1}\pi^m t^{-1})^2a_3^2 & \text{\rm otherwise.} 
\end{matrix}
\right.
\] 
By Proposition \ref{different} (2), 
\[\D_{L/K}.L(3\epsilon')\subseteq 
L(3\epsilon')^G\cO_L=\cO_L+\cO_L w_0+ \cO_L z_0.\] 
Hence $v_L(a_3)\le v_L(\D_{L/K})$. Therefore 
\[v_L(\Delta(\cZ))-v_L(\Delta(\cW'))=6v_L(\beta_1a_1)
\le 12(v_L(\D_{L/K})+e_{L/K}-1).\] 
\end{proof}

\begin{corollary}\label{bound-c} 
Keep the notation of {\rm Theorem \ref{compare-m}}. 
Let $\{1, x, y\}$ be a Weierstrass basis of $\cW$ and 
let  $\{1, x', y'\}$ be a Weierstrass basis  of $\cW'$. 
\begin{enumerate}[{\rm (1)}]
\item Then 
\[x=b_1x'+b_2, \quad y=b_3y'+b_4x'+b_5, \quad b_i\in \cO_L\]
with 
\[v_L(b_1)
\le  2(v_L(\D_{L/K})+e_{L/K}-1), \quad 
v_L(b_3)\le 3(v_L(\D_{L/K})+e_{L/K}-1).\]
\item The $\cO_K$-modules $L(2\epsilon')^G/L(2\epsilon)$ and 
$L(3\epsilon')^G/L(3\epsilon)$
are annihilated by $\pi^{2[v_K(\D_{L/K})]+ 3}$ and 
$\pi^{2[v_K(\D_{L/K})]+ 4}$ respectively.
\end{enumerate} 
\end{corollary}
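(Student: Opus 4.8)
\textbf{Proof plan for Corollary \ref{bound-c}.}

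The plan is to extract everything from the explicit construction of the model $\cZ$ carried out in the proof of Theorem \ref{compare-m}. First I would observe that $\cZ$ is \emph{some} Weierstrass model of $E$, while $\cW$ is the \emph{minimal} one, so by Lemma \ref{2.2.2}(3) there are $u, r, s, t \in \cO_K$ with $w = u^2 x + r$, $z = u^3 y + u^2 s x + t$, and $\Delta(\cZ) = u^{12}\Delta(\cW)$. Combining this with $w = \beta_1 a_1 x' + \beta_1 a_2$ and $z = \beta_3 a_3 y' + \beta_3 a_4 x' + \beta_3 a_5$ gives $b_1 = \beta_1 a_1 / u^2$ and $b_3 = \beta_3 a_3 / u^3$ (after writing $x = b_1 x' + b_2$, $y = b_3 y' + b_4 x' + b_5$ by inverting the relations of the previous two displays). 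Since $v_L(u) \ge 0$, we get $v_L(b_1) \le v_L(\beta_1 a_1)$ and $v_L(b_3) \le \frac{3}{2} v_L(\beta_1 a_1)$, using $(\beta_1 a_1)^3 = (\beta_3 a_3)^2$. The bound $v_L(\beta_1 a_1) \le 2(v_L(\D_{L/K}) + e_{L/K} - 1)$ was established at the end of the proof of Theorem \ref{compare-m} (it is exactly $\frac16$ of the displayed inequality there), so part (1) follows immediately, with the factor $\frac32$ accounting for the coefficient $3$ in the bound on $v_L(b_3)$.

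For part (2), the point is to pass from the $L$-valuation estimates to annihilators of $\cO_K$-modules. I would note that $L(2\epsilon')^G/L(2\epsilon)$ is an $\cO_K$-module, and since $\{1, x\}$ is an $\cO_K$-basis of $L(2\epsilon)$ while $\{1, w_0\}$ is an $\cO_K$-basis of $L(2\epsilon')^G$, the quotient is cyclic, generated by the class of $w_0$; so its exponent is the smallest $j$ with $\pi^j w_0 \in L(2\epsilon) = \cO_K + \cO_K x$. From $x = b_1 x' + b_2$ and $w_0 = a_1 x' + a_2$ we have $w_0 = (a_1/b_1) x + (\text{element of } \cO_K\cdot 1)$ once $b_1 \mid a_1$ in $\cO_L$; so I must bound $v_L(a_1) - v_L(b_1)$, or rather control when $a_1/b_1 \cdot \pi^j$ lands in $\cO_K$ (not just $\cO_L$). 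Here I would use that $v_L(a_3) \le v_L(\D_{L/K})$ (shown via Proposition \ref{different}(2) inside the proof of Theorem \ref{compare-m}) together with $t = a_3^2/a_1^3 \in K$, which links $v_L(a_1)$ and $v_L(a_3)$; combined with the bound on $v_L(b_1)$ and the fact that $e_{L/K}$ appears through the ceiling $[v_K(\cdot)] = \lceil v_L(\cdot)/e_{L/K}\rceil$-type rounding, one lands on the exponent $2[v_K(\D_{L/K})] + 3$. The $L(3\epsilon')^G/L(3\epsilon)$ case is parallel, generated over $\cO_K$ by the classes of $w_0$ and $z_0$ modulo $L(3\epsilon) = \cO_K + \cO_K x + \cO_K y$, using $v_L(b_3)$ and the coefficient $3$ instead of $2$, giving $2[v_K(\D_{L/K})] + 4$.

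I expect the main obstacle to be part (2): getting the rounding right. The $v_L$-estimates are all divisible-by-$e_{L/K}$ up to an explicit error, and one must be careful to distinguish "$\pi^j \cdot(\text{ratio}) \in \cO_L$" from "$\in \cO_K$" — the latter is what is needed since these are $\cO_K$-modules — and to track exactly how the $e_{L/K}-1$ term in Theorem \ref{compare-m} collapses into the $[v_K(\D_{L/K})]$ and the additive constants $3$ and $4$. The additive constants presumably come from the worst case $t \notin \cO_K$ (where the definition of $m$ and the exponents $2m, 3m$ introduce extra shifts by at most $1$ in $v_K$) combined with the passage from $v_L(\D_{L/K})$ to $2[v_K(\D_{L/K})]$, which loses less than $1$ on each of the two units of $e_{L/K}$ subtracted. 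The remaining bookkeeping — inverting the change-of-basis relations to read off $b_2, b_4, b_5$ and confirming they lie in $\cO_L$, and checking $u$ really is a unit's worth of slack in the right direction — is routine and I would not carry it out in detail.
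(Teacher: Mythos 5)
Part (1) of your proposal is correct and is essentially the paper's argument: the paper writes $x=b_1x'+b_2$, $y=b_3y'+b_4x'+b_5$ directly, reads off $b_1^3=b_3^2$ and $v_L(\Delta(\cW))-v_L(\Delta(\cW'))=6v_L(b_1)$ from Lemma~\ref{2.2.2}, and quotes Theorem~\ref{compare-m}; your detour through $\cZ$ and $b_1=\beta_1a_1/u^2$, $b_3=\beta_3a_3/u^3$ amounts to the same estimate.

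For part (2) there is a genuine gap in the quantitative step for $L(3\epsilon')^G/L(3\epsilon)$. Your reduction (the quotients are generated by the classes of $w_0$, resp.\ $w_0$ and $z_0$, and one must control when $\pi^j$ times the relevant $K$-rational coefficient lands in $\cO_K$) is sound, but the route you announce --- ``using $v_L(b_3)$ and the coefficient $3$ instead of $2$'' --- does not produce $2[v_K(\D_{L/K})]+4$. From $v_L(b_3)\le 3(v_L(\D_{L/K})+e_{L/K}-1)$ one only gets $v_K(b_3/a_3)\le v_K(b_3)\le 3v_K(\D_{L/K})+3-3/e_{L/K}$, i.e.\ a bound of the shape $3[v_K(\D_{L/K})]+O(1)$: the leading coefficient comes out as $3$, not $2$. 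The paper's mechanism is different: it works with the scalars $\beta_1,\beta_3\in\cO_K$ themselves (which satisfy $\beta_1w_0=w\in L(2\epsilon)$ and $\beta_3z_0=z\in L(3\epsilon)$ by Lemma~\ref{2.2.2}(3)), and exploits the factorization $\beta_3=\beta_1a_1\gamma_1$ with $v_L(\gamma_1)=ne_{L/K}-v_L(a_1)\le e_{L/K}-1$. Combined with
\[
v_L(\beta_1a_1)=2\bigl(ne_{L/K}-v_L(a_1)\bigr)+2v_L(a_3)\le 2(e_{L/K}-1)+2v_L(\D_{L/K}),
\]
this gives $v_L(\beta_3)\le 2v_L(\D_{L/K})+3(e_{L/K}-1)$: the different enters with coefficient $2$ (through $a_3^2$ and $v_L(a_3)\le v_L(\D_{L/K})$), and only the sub-$e_{L/K}$ slack picks up a third copy. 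Dividing by $e_{L/K}$ and using that $v_K(\beta_1),v_K(\beta_3)$ are \emph{integers} (this integrality, which you do not invoke, is exactly what sharpens $<2[v_K(\D_{L/K})]+4$ and $<2[v_K(\D_{L/K})]+5$ to the stated $+3$ and $+4$) yields the claim. A minor further slip: for the rank-$2$ quotient you propose to ``bound $v_L(a_1)-v_L(b_1)$'', but the quantity that must be bounded above is $v_K(b_1/a_1)$, which follows from $v_L(b_1)\le 2(v_L(\D_{L/K})+e_{L/K}-1)$ and $v_L(a_1)\ge 0$ alone; the auxiliary facts $v_L(a_3)\le v_L(\D_{L/K})$ and $t=a_3^2/a_1^3\in K$ are not what is needed there, whereas the first of them is indispensable in the rank-$3$ case as explained above.
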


\begin{proof} (1) As $x\in L(2\epsilon')$ and $y\in L(3\epsilon')$, we can write
$x=b_1x'+b_2$, $y=b_3y'+b_4x'+b_5$ for $b_i\in \cO_L$. 
By Lemma \ref{2.2.2}, we have $b_1^3=b_3^2$ and 
$v_L(\Delta(\cW))-v_L(\Delta(\cW'))=6v_L(b_1)$. The bounds on 
$v_L(b_1)$ and $v_L(b_3)$ are then a consequence of 
Theorem~\ref{compare-m} and of the relation $b_3^2=b_1^3$. 

(2) Keep the notation in the proof of \ref{compare-m}. 
As $\beta_1w_0, \beta_3z_0\in L(3\epsilon)$, it is enough to
bound $v_K(\beta_1)$ and $v_K(\beta_3)$. The computations
in the proof of Theorem \ref{compare-m} imply that  
$v_L(\beta_1)\le v_L(\beta_1a_1)\le 2v_L(\D_{L/K})+ 2e_{L/K}-2$ and 
$\beta_3=\beta_1a_1\gamma_1$ with $\gamma_1=\pi^n/a_1$ if 
$t\in \cO_K$ and $\gamma=(\pi^mt)/a_1$ otherwise. 
Hence $v_L(\beta_3)\le v_L(\beta_1)+e_{L/K}-1$. 
Dividing by the ramification index $e_{L/K}$ we then get (2). 
\end{proof}

\begin{corollary} \label{pot-g} Suppose $E$ has good 
reduction over some Galois extension $L$. Then 
\[v_K(\Delta(\cW))\le [12(v_L(\D_{L/K})-1)/e_{L/K}]+12.\]
\end{corollary}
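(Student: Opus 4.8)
The plan is to deduce Corollary~\ref{pot-g} directly from Theorem~\ref{compare-m} and a normalization of the Weierstrass model over $\cO_L$. Since $E$ has good reduction over $L$, the minimal Weierstrass model $\cW'$ of $E_L$ over $\cO_L$ is smooth over $\cO_L$, so its special fiber is a smooth genus-one curve; in particular $\cW'$ is the N\'eron model of $E_L$ and $v_L(\Delta(\cW'))=0$. (Concretely: a smooth Weierstrass curve has $\Delta\in \cO_L^\times$.) Therefore Theorem~\ref{compare-m} gives
\[
v_L(\Delta(\cW)) = v_L(\Delta(\cW)) - v_L(\Delta(\cW')) \le 12\bigl(v_L(\D_{L/K}) + e_{L/K} - 1\bigr).
\]

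The remaining step is purely arithmetical: pass from the $L$-valuation to the normalized $K$-valuation. Since $v_L(\Delta(\cW)) = e_{L/K}\, v_K(\Delta(\cW))$ and $v_K(\Delta(\cW))$ is a nonnegative integer, dividing the displayed inequality by $e_{L/K}$ yields
\[
v_K(\Delta(\cW)) \le \frac{12\bigl(v_L(\D_{L/K}) - 1\bigr)}{e_{L/K}} + 12,
\]
and because the left-hand side is an integer we may replace the right-hand side by its floor, giving exactly the claimed bound $v_K(\Delta(\cW)) \le [12(v_L(\D_{L/K})-1)/e_{L/K}] + 12$.

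There is essentially no obstacle here: the only point requiring care is the assertion that good reduction of $E_L$ forces $\Delta(\cW')$ to be a unit in $\cO_L$, i.e.\ that the minimal Weierstrass model coincides with the smooth model. This follows because a smooth proper model with geometrically integral genus-one fibers and a section is itself a Weierstrass model (its special fiber being a smooth cubic), so by minimality $v_L(\Delta(\cW'))$ is as small as possible, namely $0$; equivalently, smoothness of the special fiber of a Weierstrass equation is detected by $\Delta$ being a unit. Everything else is the elementary manipulation of valuations and the floor function described above. I would state the proof in two short sentences invoking Theorem~\ref{compare-m} together with $v_L(\Delta(\cW'))=0$, then take floors.
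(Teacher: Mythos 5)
Your proposal is correct and is exactly the argument the paper intends (the corollary is stated without proof, as an immediate consequence of Theorem~\ref{compare-m}): good reduction over $L$ gives $v_L(\Delta(\cW'))=0$, the theorem then bounds $v_L(\Delta(\cW))$ by $12(v_L(\D_{L/K})+e_{L/K}-1)$, and dividing by $e_{L/K}$ and taking the integer part of the right-hand side yields the stated bound. Your justification that the minimal Weierstrass model over $\cO_L$ has unit discriminant under good reduction is also the right point to check, so nothing is missing.
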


\begin{remark}\label{abs-b} (\emph{Absolute bound for the minimal
discriminant}) Let $f$ be the conductor of $E$ and 
let $m$ be the number of geometric irreducible
components of the special fiber of the minimal
regular model $\cX$ of $E$ over $\cO_K$. Then
Ogg-Saito's formula (\cite{Saito}, Corollary 2, 
\cite{sil2}, \S IV.11) is 
\[v_K(\Delta(\cW))=f+m-1.\] 
Suppose that $E$ has potentially good reduction
and $K$ is a finite extension of 
$\mathbb Q_p$, by \cite{BK}, Theorem 6.2, 
$f\le 2+6v_K(2)+3v_K(3)$. Hence
\[v_K(\Delta(\cW))\le 10+6v_K(2)+3v_K(3).\]
To be more precise, \cite{BK} gives a bound on 
the Artin conductor $f(V, L/K)$ for any $G$-module $V$ 
(\cite{BK}, \S 5), and the bound on $f$ is then deduced 
using the $G$-module of the $\ell$-torsions $E[\ell]$ for a prime
number $\ell\ne p$. Our Corollary \ref{pot-g} 
is of different nature because it gives a bound
of $v_K(\Delta(\cW))$ in terms of the Artin conductor
for the representation $r_G - I_G$ (regular representation 
minus unit representation). It is better when 
$v_L(\D_{L/K})$ is small with respect to $\max\{v_L(2), v_L(3)\}$. 
\end{remark}

\begin{remark}\label{pot-m} Suppose $E$ has potentially multiplicative
  reduction.  
Consider $c_4(\cW)$ the invariant $c_4$ associated to
Equation~(\ref{eqK}). Then $j(E)=c_4(\cW)^3/\Delta(\cW)$. 
Similarly to Theorem~\ref{compare-m}, we have 
$0\le v_L(c_4(\cW)) - v_L(c_4(\cW')) \le 4(v_L(\D_{L/K})+e_{L/K}-1)$. 
Hence 
\[v_K(c_4(\cW))\le [4(v_L(\D_{L/K})-1)/e_{L/K}]+4\] 
for any (quadratic) extension $L/K$ such that $E_L$ has multiplicative
reduction. 
\end{remark}

\begin{emp}\label{base-cc} \emph{\bf Base change conductor}. 
Suppose $K$ henselian.  
In \cite{CY} and \cite{Chai}, C-L. Chai and J-K.  Yu
introduced the notion of base change conductor for algebraic tori
and abelian varieties over $K$. 
Let $L/K$ be a Galois extension such that $E_L$ has semi-stable
reduction. 
Consider the N\'eron model $\mathcal{E}$ 
(resp. $\mathcal{E'}$) of $E$ (resp. $E_{L}$) over $\cO_{K}$ 
(resp. $\cO_{L}$). 
Let $\omega_{\mathcal E/\cO_K}$ (resp. $\omega_{\mathcal E'/\cO_L}$) 
be the module of the translation invariant differential forms on 
$\mathcal{E}$ 
(resp. $\mathcal{E'}$) over $\cO_{K}$ (resp. $\cO_{L}$).
By definition, the \emph{base change conductor} $c(E) \in \mathbb{Q}$ of $E$ 
is the length of 
$(\omega_{\mathcal E/\cO_K}\otimes \cO_{L})/\omega_{\mathcal E'/\cO_L}$ as 
$\cO_{L}$-modules divided by the ramification index $e_{L/K}$ of $L/K$. 
\end{emp}

\begin{proposition} Suppose $K$ henselian. 
Let $\cW$ be the minimal Weierstrass model of $E$ 
over $\cO_{K}$. Then the base change conductor $c(E)$ is given by 
\begin{equation*}
c(E)=\min \{ \frac{1}{12}v_{K}(\Delta(\cW)), \frac{1}{4}v_{K}(c_4(\cW))
\}. 
\end{equation*}
\end{proposition}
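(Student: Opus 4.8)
The plan is to relate the base change conductor $c(E)$, defined via N\'eron models, to the minimal Weierstrass model $\cW$, using the fact that $E_L$ has semi-stable (hence, by a suitable choice or by allowing a further extension, actually good or multiplicative) reduction. First I would reduce to the case where $E_L$ has either good reduction or split multiplicative reduction, since passing to a larger Galois extension does not change $c(E)$ (the quantity is defined as a limit/is independent of the chosen $L$ realizing semi-stability), and it does not change the right-hand side either, which depends only on $\cW$, i.e. only on $E/K$.

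In the good reduction case, $\mathcal E'$ is an abelian scheme over $\cO_L$, so $\omega_{\mathcal E'/\cO_L}$ is free of rank $1$ generated by the N\'eron differential of the smooth model $\cX'=\cW'$; concretely, for a Weierstrass basis $\{1,x',y'\}$ of $\cW'$, the differential $\omega'=dx'/(2y'+a_1'x'+a_3')$ generates it. On the other side, $\omega_{\mathcal E/\cO_K}$ is generated by $\omega=dx/(2y+a_1x+a_3)$ for a Weierstrass basis of the minimal model $\cW$. Using Corollary~\ref{bound-c}(1), writing $x=b_1x'+b_2$, $y=b_3y'+b_4x'+b_5$ with $b_1^3=b_3^2$, one computes $\omega = (b_1^3/b_3)\,\omega' = b_1\omega'$, so the length of $(\omega_{\mathcal E/\cO_K}\otimes\cO_L)/\omega_{\mathcal E'/\cO_L}$ equals $v_L(b_1)$. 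Since $v_L(\Delta(\cW))-v_L(\Delta(\cW'))=6v_L(b_1)$ (Lemma~\ref{2.2.2}) and $\Delta(\cW')$ is a unit in $\cO_L$ (good reduction), we get $v_L(b_1)=\tfrac16 v_L(\Delta(\cW)) = \tfrac{e_{L/K}}{6}v_K(\Delta(\cW))$ when $v_L(\Delta(\cW'))=0$; more carefully, $v_L(b_1)=\tfrac16(v_L(\Delta(\cW))-v_L(\Delta(\cW')))$ and dividing by $e_{L/K}$ gives $c(E)=\tfrac1{12}v_K(\Delta(\cW))$ using that $v_K(\Delta(\cW'))=0$. In this case $v_K(c_4(\cW))\ge 0$ while $j=c_4^3/\Delta$ is a unit or integral, so $\tfrac14 v_K(c_4(\cW))\ge \tfrac1{12}v_K(\Delta(\cW))$ (since $3v_K(c_4)=v_K(j)+v_K(\Delta)\ge v_K(\Delta)$ as $v_K(j)\ge 0$ for potentially good reduction), and the minimum is indeed $\tfrac1{12}v_K(\Delta(\cW))$.

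In the multiplicative reduction case over $L$, the identity component $\mathcal E'^0$ is a torus $\mathbb G_{m,\cO_L}$ (split, after our reduction), whose invariant differential is $du/u$ with $u$ the Tate parameter coordinate; the relevant comparison identifies the length of $(\omega_{\mathcal E/\cO_K}\otimes\cO_L)/\omega_{\mathcal E'/\cO_L}$ with $v_L(c_4(\cW)/c_4(\cW'))$ up to the analogous bookkeeping, using that for a Tate curve the N\'eron differential is related to $c_4$ rather than $\Delta$ (equivalently, $v_L(\Delta(\cW'))=-v_L(j)$ and $c_4(\cW')$ is a unit). Using Remark~\ref{pot-m}, which gives $0\le v_L(c_4(\cW))-v_L(c_4(\cW'))\le 4(v_L(\D_{L/K})+e_{L/K}-1)$ and the scaling relation $x=b_1x'+\cdots$, $c_4(\cW)=b_1^2 c_4(\cW')$, one obtains $c(E)=\tfrac1{e_{L/K}}v_L(b_1)=\tfrac14 v_K(c_4(\cW))$ when $c_4(\cW')$ is a unit. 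Here $v_K(j)<0$, so $3v_K(c_4(\cW))=v_K(j)+v_K(\Delta(\cW))<v_K(\Delta(\cW))$, giving $\tfrac14 v_K(c_4(\cW))<\tfrac1{12}v_K(\Delta(\cW))$, so again the formula with the $\min$ is correct.

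The main obstacle is the precise identification, in each reduction type, of the N\'eron differential of $\mathcal E'$ with the differential coming from a Weierstrass basis of $\cW'$ — that is, verifying that for good reduction the minimal Weierstrass model computes $\omega_{\mathcal E'}$ exactly (true since $\cW'=\cX'$ is smooth, so its relative dualizing sheaf is $\omega_{\mathcal E'}$ and is generated by $dx'/(2y'+a_1'x'+a_3')$), and for multiplicative reduction tracking how the $c_4$-normalization enters via the Tate uniformization so that the length becomes $v_L(c_4)$-controlled rather than $v_L(\Delta)$-controlled. Once these local identifications are in place, combining them with Lemma~\ref{2.2.2}(3), Corollary~\ref{bound-c}, and Remark~\ref{pot-m}, together with the elementary inequality comparing $\tfrac1{12}v_K(\Delta(\cW))$ and $\tfrac14 v_K(c_4(\cW))$ according to the sign of $v_K(j(E))$, yields the stated formula $c(E)=\min\{\tfrac1{12}v_K(\Delta(\cW)),\ \tfrac14 v_K(c_4(\cW))\}$ uniformly.
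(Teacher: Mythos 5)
Your strategy is the same as the paper's: generate $\omega_{\mathcal E/\cO_K}$ and $\omega_{\mathcal E'/\cO_L}$ by the differentials of the minimal Weierstrass models $\cW$ and $\cW'$, compare them through the change of coordinates, and split into the potentially good and potentially multiplicative cases according to the sign of $v_K(j)$. Two corrections. First, your length computation has a factor-of-two slip: writing $x=u^2x'+r$, $y=u^3y'+u^2sx'+t$ (so $b_1=u^2$, $b_3=u^3$), one gets $\omega'=u\omega$ with $u\in\cO_L$, hence $\omega_{\mathcal E'/\cO_L}=u\,(\omega_{\mathcal E/\cO_K}\otimes\cO_L)$ and the length of the quotient is $v_L(u)=v_L(b_3)-v_L(b_1)=\tfrac12 v_L(b_1)$, not $v_L(b_1)$; your identity $\omega=(b_1^3/b_3)\,\omega'$ should read $\omega=(b_1/b_3)\,\omega'$, and the divisibility goes the other way (it is $\omega'$ that is an $\cO_L$-multiple of $\omega$, otherwise the quotient you measure would be zero). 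As written, your intermediate equalities would give $c(E)=\tfrac16 v_K(\Delta(\cW))$ in the good reduction case; with $v_L(u)$ in place of $v_L(b_1)$ everything lands on the stated $\tfrac1{12}v_K(\Delta(\cW))$ and $\tfrac14 v_K(c_4(\cW))$. Second, the ``main obstacle'' you isolate in the multiplicative case is not there: the paper simply invokes the fact (\cite{Liu}, Proposition 9.4.35) that $\omega_{\mathcal E'/\cO_L}$ is generated by the differential of the \emph{minimal Weierstrass model} $\cW'$ in every reduction type, so no Tate uniformization and no reduction to split multiplicative reduction are needed; the relation $c_4(\cW)=u^4c_4(\cW')$ together with $v_L(c_4(\cW'))=0$ closes that case exactly as $\Delta(\cW)=u^{12}\Delta(\cW')$ with $v_L(\Delta(\cW'))=0$ closes the other. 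Your comparison of the two terms of the minimum via $v_K(j)=3v_K(c_4(\cW))-v_K(\Delta(\cW))$ agrees with the paper's.
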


\begin{proof} It is known that $\omega_{\mathcal E/\cO_K}$ (resp. 
$\omega_{\mathcal E'/\cO_L}$) is a free 
$\cO_{K}$-module (resp. $\cO_{L}$-module) generated by some 
canonical differential form $\omega=dx/(2y+a_1x)$ 
(resp. $\omega'$) (\cite{Liu}, Proposition 9.4.35). 
By Lemma \ref{2.2.2}, there exists $u\in \cO_L$ such that 
$\omega'=u\omega$ and $\Delta(\cW)=u^{12}\Delta(\cW')$,
$c_4(\cW)=u^4c_4(\cW')$. Hence 
$c(E)=v_{L}(u)/e_{L/K}$. 
If $E$ has potentially good reduction, then $v_{L}(\Delta(\cW'))=0$, 
$c(E)=v_K(\Delta(\cW))/12$ and 
\[\frac{1}{4}v_K(c_4(\cW))-\frac{1}{12}v_K(\Delta(\cW))=
\frac{1}{12}v_K(j)\ge 0.\] 
Similarly, if $E$ has potentially multiplicative reduction, then 
we have $v_L(c_4(\cW'))=0$, $c(E)=v_K(c_4(\cW))/4$ and 
$v_K(c_4(\cW))/4 < v_K(\Delta(\cW))$ because $v_K(j)<0$. 
\end{proof}

\begin{corollary} Let $L/K$ be a finite Galois extension 
such that $E_L$ has semi-stable reduction. Then  
\[c(E)<v_K(\D_{L/K})+1.\] 
In particular, if $\mathrm{char}(K)=0$, then 
$c(E) < 24v_K(p)/(p-1)+1$. 
\end{corollary}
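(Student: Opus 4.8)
The plan is to read the bound straight off the formula of the preceding proposition, namely $c(E)=\min\{\tfrac1{12}v_K(\Delta(\cW)),\,\tfrac14 v_K(c_4(\cW))\}$, using Corollary~\ref{pot-g} and Remark~\ref{pot-m} according to the reduction type of $E$. Since $E_L$ is semi-stable, $j(E)=j(E_L)$ is either integral or not: in the first case $E$ has potentially good reduction and $E_L$, being semi-stable with integral $j$, has good reduction; in the second case $E$ has potentially multiplicative reduction and $E_L$ has multiplicative reduction. So exactly one of Corollary~\ref{pot-g}, Remark~\ref{pot-m} applies to the given $L$.

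Suppose first $E_L$ has good reduction. Then Corollary~\ref{pot-g} gives, using $v_K(\D_{L/K})=v_L(\D_{L/K})/e_{L/K}$,
\[ v_K(\Delta(\cW)) \le \big[12(v_L(\D_{L/K})-1)/e_{L/K}\big] + 12 \le 12\big(v_K(\D_{L/K})-1/e_{L/K}\big)+12. \]
Dividing by $12$ and using $c(E)\le \tfrac1{12}v_K(\Delta(\cW))$ we get $c(E)\le v_K(\D_{L/K})-1/e_{L/K}+1 < v_K(\D_{L/K})+1$, since $e_{L/K}\ge 1$. If instead $E_L$ has multiplicative reduction, the identical computation with Remark~\ref{pot-m} replacing Corollary~\ref{pot-g}, $c_4$ replacing $\Delta$ and $4$ replacing $12$ gives $c(E)\le \tfrac14 v_K(c_4(\cW)) \le v_K(\D_{L/K})-1/e_{L/K}+1 < v_K(\D_{L/K})+1$. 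This settles the first assertion.

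For the second, observe that $c(E)$ is intrinsic to $E$ and does not depend on the auxiliary extension, so we may choose $L/K$ to be a Galois extension over which $E$ acquires semi-stable reduction with $[L:K]$ as small as possible; it is classical, via the structure of the image of inertia on the $\ell$-adic Tate module (the ``semistability defect''), that such an $L$ can be taken with $[L:K]$ dividing $24$ (dividing $2$ in the potentially multiplicative case). When $\car(K)=0$ and $p>0$, Remark~\ref{bound-diff}(2) then yields $v_K(\D_{L/K}) \le [L:K]\,v_K(p)/(p-1) \le 24\,v_K(p)/(p-1)$, and combining this with the first part gives $c(E) < 24\,v_K(p)/(p-1)+1$. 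The only genuinely non-formal input is this last one — the existence of a semi-stable extension of degree dividing $24$ — while everything else is a direct substitution into the displayed formula of the previous proposition together with the two already-established bounds on $v_K(\Delta(\cW))$ and $v_K(c_4(\cW))$.
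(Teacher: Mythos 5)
Your proposal is correct and follows essentially the same route as the paper: the first assertion is read off from the formula $c(E)=\min\{\tfrac1{12}v_K(\Delta(\cW)),\tfrac14 v_K(c_4(\cW))\}$ via Corollary~\ref{pot-g} in the potentially good case and Remark~\ref{pot-m} in the potentially multiplicative case, and the second assertion uses the existence of a semi-stable extension of degree dividing $24$ together with Remark~\ref{bound-diff}. Your write-up merely makes explicit the arithmetic (the $-1/e_{L/K}$ term giving the strict inequality) that the paper leaves to the reader.
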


\begin{proof} The first part comes from Corollary~\ref{pot-g} and 
Remark~\ref{pot-m}. For the second part, note that $E$ has semi-stable
reduction over an extension $L/K$ of degree dividing 24 
and then use Remark~\ref{bound-diff}. 
\end{proof}

\end{section}

\begin{section}{Congruences of  minimal Weierstrass models}\label{minimal-w_n}

From now on, we will suppose that $K$ has \emph{perfect residue field}.
For any scheme $\cZ$ over $\cO_K$ (including $\cO_L$-schemes), 
recall that 
\[\cZ_N:=\cZ\times_{\Spec\cO_K}\Spec(\cO_K/\pi^{N+1}\cO_K)\] 
for any non-negative integer $N$. 
Similarly, recall that for any $\cO_K$ or $\cO_L$-module $M$, we denote 
\[M_N=M/\pi^{N+1}M.\] 
For any morphism $f$ of schemes over $\cO_K$, $f_N$ denotes the canonical
morphism obtained by base change to $\cO_K/\pi^{N+1}\cO_K$. 

We keep the notation of \S \ref{minimal-w}. In particular, 
$\cW$ and $\cW'$ are the respective
minimal Weierstrass model of $E$ and $E_L$ over $\cO_K$ and $\cO_L$, 
and $\epsilon'$ is the closure in $\cW'$ of the origin of $E_L$.  
We will also work with another discrete valuation field $K_{o}$
with perfect residue field, a Galois extension $L_o/K_o$ of group $G$
and an elliptic curve $E_o$ over $K_o$. We will denote the analogous 
construction by the same notation with a subscript $o$.  
We will say  \emph{$\cW_{N}$ is determined by the 
$G$-action on $\cW'_{m}$} (or \emph{by $(\cW'_m, G)$} for short) for some 
$m\ge N$ if the existence of compatible $G$-equivariant isomorphisms: 
\[
(\text{\rm Iso}_{m})\quad  \left\lbrace\begin{matrix} 
\cO_{K, m} \simeq \cO_{K_{o}, m}, \hfill \\ 
\theta_{m} : \cO_{L, m} \simeq \cO_{L_o, m}, \hfill \\ 
\cW'_{m}\simeq \cW'_{o, m}, \quad \epsilon'_m \mapsto \epsilon'_{o, m}\hfill 
\end{matrix}
\right.
\] 
implies $\cW_{N}\simeq \cW_{o,N}$. Let us stress that 
$\cW'_{m}\to \cW'_{o, m}$ is supposed to map $\epsilon'_m$ to $\epsilon'_{o,m}$ 
(the Weierstrass models should be regarded as \emph{pointed schemes}). 
The aim of this section is to 
show that $\cW_N$ is determined by $(\cW'_m, G)$ when $m\gg 0$ 
(Theorem \ref{2.3.2}).

\begin{lemma}\label{disc} 
Let $N > v_K(\D_{L/K})-1$. 
If $\cO_{L, N}\simeq \cO_{L_0, N}$ as $G$-modules, then 
$v_{L_o}(\D_{L_o/K_o})=v_{L}(\D_{L/K})$ 
and $e_{L/K}=e_{L_o/K_o}$.
\end{lemma}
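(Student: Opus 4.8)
The plan is to recover the different and the ramification index directly from the ring structure of $\cO_{L,N}$ equipped with its $G$-action, using the well-known description of the different as the annihilator of the module of K\"ahler differentials. First I would recall that since $\cO_L$ is a semilocal Dedekind domain which is finite flat over the complete (or henselian) local ring $\cO_K$, one has $\D_{L/K} = \Ann_{\cO_L}(\Omega_{\cO_L/\cO_K})$, and $\Omega_{\cO_L/\cO_K}$ is (locally at each maximal ideal $\p$) cyclic of length $v_L(\D_{L/K})$. The key point is that $\Omega_{\cO_L/\cO_K}$ only depends on $\cO_L$ as an $\cO_K$-algebra, hence the isomorphism class of $\Omega_{\cO_L/\cO_K}/\pi^{N+1}$ is determined by $\cO_{L,N}$ as an $\cO_{K,N}$-algebra. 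Provided $N$ is large enough that truncation does not lose information, i.e. $N+1 > v_L(\D_{L/K})/e_{L/K} = v_K(\D_{L/K})$ so that $\pi^{N+1}$ already kills $\Omega_{\cO_L/\cO_K}$, we get $\Omega_{\cO_L/\cO_K}\otimes_{\cO_K}\cO_{K,N} \cong \Omega_{\cO_{L,N}/\cO_{K,N}}$, and reading off its length (over $\cO_K$, say, using the $\cO_K$-module structure which is transported by the isomorphism) gives $v_L(\D_{L/K})$ on the nose — hence equals $v_{L_o}(\D_{L_o/K_o})$.

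Next I would recover $e_{L/K}$. One clean way: the number $e_{L/K}$ can be read from the nilpotent structure of $\cO_{L,N}$, since $\pi\cO_L \cap \cO_{L,N}$ generates a nilpotent ideal whose nilpotency behaviour detects $e_{L/K}$; more robustly, $e_{L/K}f_{L/K} = [L:K] = \dim_k(\cO_L/\pi\cO_L)$ and $f_{L/K}$ is the number of $k$-points (counted with the residue-field degree) — all of this is visible in $\cO_{L,0}$, which is a further quotient of $\cO_{L,N}$. Alternatively, and perhaps most in the spirit of the paper, once $v_L(\D_{L/K})$ is known and $v_K(\D_{L/K}) = v_L(\D_{L/K})/e_{L/K}$ is determined by the same argument applied over $\cO_K$ (i.e. reading the $\cO_K$-length versus the $\cO_L$-length of the truncated different module), the ratio gives $e_{L/K}$. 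Since $L_o/K_o$ has the same $G$, and $\cO_{L_o,N}\cong\cO_{L,N}$ as $G$-modules over $\cO_{K_o,N}\cong\cO_{K,N}$, every quantity computed is the same on both sides.

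The main obstacle is making precise that the hypothesis $N > v_K(\D_{L/K}) - 1$ on one side is enough to guarantee the conclusion symmetrically, i.e. that we are not in a situation where $\pi^{N+1}$ kills $\Omega_{\cO_L/\cO_K}$ but not $\Omega_{\cO_{L_o}/\cO_{K_o}}$, which would make the comparison of lengths circular. This is handled by observing that the length of $\Omega_{\cO_{L,N}/\cO_{K,N}}$ as computed from the truncated ring is always $\min\{N+1, \text{something}\}\cdot(\text{multiplicity})$-type data; if on the $L$-side the hypothesis forces this to be the \emph{stable} value $v_L(\D_{L/K})$ (i.e. the truncation map $\Omega_{\cO_L/\cO_K}\to\Omega_{\cO_{L,N}/\cO_{K,N}}$ is an isomorphism), then the isomorphic $L_o$-side module also has that length, and one checks this length is $< (N+1)e_{L_o/K_o}$ which is exactly the condition that the $L_o$-side is also in the stable range — so the argument closes up. I would also note that one should be slightly careful passing between $\cO_L$ and its completion at a maximal ideal (the different, ramification index, and K\"ahler differentials are all insensitive to this completion, as used already in the proof of Proposition~\ref{different}), which lets one reduce to the local complete case where all the module-length bookkeeping is transparent.
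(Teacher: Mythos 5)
Your proof is correct, but it takes a genuinely different route from the paper's. After the same reduction to the complete local case, the paper reads $e_{L/K}$ off from the ring $\cO_L/\pi\cO_L$ (as you do) and then recovers $v_L(\D_{L/K})$ from the ramification filtration: writing $r$ for its length, one has $v_L(\sigma(\pi_L)-\pi_L)\le r\le v_L(\D_{L/K})<(N+1)e_{L/K}$, so the filtration is visible in $\cO_{L,N}$ with its $G$-action, and Serre's formula $v_L(\D_{L/K})=\sum_i(|G_i|-1)$ concludes. You instead recover the different from $\Omega_{\cO_{L,N}/\cO_{K,N}}\cong\Omega_{\cO_L/\cO_K}\otimes_{\cO_K}\cO_{K,N}$ (pure base change, valid for every $N$) together with $\Omega_{\who_{L,\p}/\who_K}\simeq\who_{L,\p}/\D_{L/K}$, and your ``stable range'' symmetry argument is exactly right: the common local length is $\min\{(N+1)e,\,v_L(\D_{L/K})\}=v_L(\D_{L/K})<(N+1)e_{L/K}=(N+1)e_{L_o/K_o}$, which forces the $L_o$-side to be unsaturated as well. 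Each approach buys something: yours never invokes the $G$-action, only the ring isomorphism over $\cO_{K,N}\simeq\cO_{K_o,N}$ (note that both proofs read the hypothesis ``as $G$-modules'' as the $G$-equivariant \emph{ring} isomorphism $\theta_N$ of (Iso$_N$)), so it proves a formally stronger statement; the paper's argument yields the stronger conclusion that the whole ramification filtration agrees on both sides. Two small caveats: the identification $\Omega_{\who_{L,\p}/\who_K}\simeq\who_{L,\p}/\D_{L/K}$ requires the residue extension to be separable, which holds here because $k$ is perfect (the standing hypothesis of this section); and your parenthetical alternative for $e_{L/K}$ --- comparing the $\cO_K$-length with the $\cO_L$-length of the truncated module --- actually produces the residue degree $f$ rather than $e$, so you should rely on your primary route (the nilpotency index of the maximal ideal of $\cO_{L,0}$, which is also what the paper uses).
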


\begin{proof} The maximal ideals $\p_1, \dots, \p_n$ 
of $\cO_L$ correspond to the maximal ideals of the 
semi-local ring $\cO_{L, N}$.  
Hence there is a one-one correspondence between the maximal
ideals of $\cO_L$ and that of $\cO_{L_o}$. 
We have 
\[\D_{L/K}\who_{L, \p_i}=\D_{\who_{L, \p_i}/\who_K}\] 
It is therefore enough to deal with the case when $K$ and $K_o$ are complete. 

The isomorphism $\cO_L/\pi\cO_L\simeq \cO_{L_o}/\pi_o\cO_{L_o}$ implies
the equality of ramification indexes $e_{L/K}=e_{L_o/K_o}$. 
Let $G=G_0\supseteq G_1 \supseteq G_2 \supseteq ... \supseteq G_r = \{ 1 \}$
be the ramification filtration of $G$ acting on $\cO_L$ (with 
$G_{r-1}$ non-trivial). Then 
\[v_L(\D_{L/K})=(\sum_{i\ge 0} (|G_i|-1))\ge r\]
(\cite{Serre}, IV.1, Proposition 4\footnote{Here the hypothesis $k$ 
perfect is used. We don't known whether it is really necessary.}). As 
$v_L(\sigma(\pi_L)-\pi_L)\le r$ for all $\sigma\in G\setminus \{1\}$, 
the same property holds for $\pi_{L_o}$ thanks to the
isomorphism $\theta_N$
and because $(N+1)e_{L/K}>v_L(\D_{L/K})\ge r$. Thus the ramification filtration of
$G$ acting on $L_o$ is the same as that of $G$ acting on $L$, and 
$v_L(\D_{L/K})=v_{L_o}(\D_{L_o/K_o})$. 
\end{proof}

Fix $N\ge 0$ and let $m\ge N$. Suppose we are given isomorphisms 
(Iso$_m$) as above. Then they 
induce canonically isomorphisms (Iso$_i$) for all integers $0\le i\le m$. 
The isomorphisms $\cW'_{i} \to \cW'_{o, i}$ ($0\le i\le m$) 
induce $G$-invariant isomorphisms 
\[\varphi_i : L(6\epsilon'_o)_{i} \simeq L(6\epsilon')_{i}, \quad i\le m\]
which respect the filtration by the order of the pole and are compatible
with the multiplications 
$L(n\epsilon'_o)_i\times L(r\epsilon'_o)_i\to L((n+r)\epsilon'_o)_i$. 
For any element $z\in M$ in some $\cO_K$-module, we denote by 
$\bar{z}$ its image in $M_i$ if no confusion is possible. 

\begin{lemma}\label{lift-w}
Let $\{1, x'_o, y'_o\}$ be a Weierstrass basis of 
$\cW'_o$. Then 
$\{1, \varphi_m(\bar{x}'_o), \varphi_m(\bar{y}'_o)\}\subset 
L(3\epsilon')_m$ lifts to a Weierstrass basis of $\cW'$. 
\end{lemma}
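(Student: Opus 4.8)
The goal is to produce a genuine Weierstrass basis $\{1, x', y'\}$ of $\cW'$ over $\cO_L$ whose reduction mod $\pi^{m+1}$ agrees with $\{1, \varphi_m(\bar x'_o), \varphi_m(\bar y'_o)\}$. First I would pick an arbitrary Weierstrass basis $\{1, x', y'\}$ of $\cW'$ and use Lemma~\ref{2.2.2}(3) applied to the two $\cO_L$-scheme structures — note here $E$ plays no role, we are over $\cO_L$ — to describe the most general change of Weierstrass basis: any other basis has the form $u^2 x' + r$, $u^3 y' + u^2 s x' + t$ with $u$ a unit of $\cO_L$ and $r, s, t \in \cO_L$ (the discriminant only changes by $u^{12}$, and since we stay with the \emph{minimal} model, $u$ must be a unit). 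So the problem reduces to: given that $\{1, \varphi_m(\bar x'_o), \varphi_m(\bar y'_o)\}$ is a Weierstrass basis of $\cW'_m$, find a unit $u \in \cO_L$ and $r, s, t \in \cO_L$ such that $u^2 x' + r$, $u^3 y' + u^2 s x' + t$ reduce mod $\pi^{m+1}$ to $\varphi_m(\bar x'_o)$, $\varphi_m(\bar y'_o)$.

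\textbf{Key steps.} Write $\varphi_m(\bar x'_o) = \bar a \bar x' + \bar b$ and $\varphi_m(\bar y'_o) = \bar c \bar y' + \bar d \bar x' + \bar e$ in $L(3\epsilon')_m$, which is possible because $\varphi_m$ respects the pole-order filtration: $\varphi_m(\bar x'_o) \in L(2\epsilon')_m = \cO_{L,m} \oplus \cO_{L,m}\bar x'$ and $\varphi_m(\bar y'_o) \in L(3\epsilon')_m$ with $L(3\epsilon')_m/L(2\epsilon')_m$ free of rank one on $\bar y'$. Here $a, b, c, d, e \in \cO_L$ are arbitrary lifts of the coefficients. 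Now lift $\bar a$ to $a \in \cO_L$, set $x'_1 := a x' + b$, and similarly lift to $y'_1 := c y' + d x' + e$; these reduce correctly mod $\pi^{m+1}$ by construction. It remains to check that $\{1, x'_1, y'_1\}$ is an \emph{honest} Weierstrass basis of $\cW'$, i.e. (Lemma~\ref{2.2.2}(1)) that $\{1, x'_1\}$ is a basis of $L(2\epsilon')$, that $y'_1 \in L(3\epsilon') \setminus L(2\epsilon')$, and that $y_1'^2 - x_1'^3 \in L(5\epsilon')$. The containments $x'_1 \in L(2\epsilon')$, $y'_1 \in L(3\epsilon')$ are automatic. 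For the basis property one must show $a$ and $c$ are units of $\cO_L$: since $\{1, \varphi_m(\bar x'_o)\}$ is a basis of $L(2\epsilon')_m$ over $\cO_{L,m}$, $\bar a$ is a unit in $\cO_{L,m}$, hence $a$ is a unit in $\cO_L$ (a lift of a unit modulo the Jacobson radical is a unit, and $\pi$ lies in the Jacobson radical of the semilocal ring $\cO_L$); similarly $c$ is a unit because $y'_1$ must generate $L(3\epsilon')/L(2\epsilon')$, seen after reduction. Finally, for $y_1'^2 - x_1'^3 \in L(5\epsilon')$: this element a priori lies in $L(6\epsilon')$, and its image in $L(6\epsilon')_m = L(6\epsilon')/\pi^{m+1}L(6\epsilon')$ equals $\varphi_m(\bar y_o'^2 - \bar x_o'^3)$, which lies in $L(5\epsilon')_m$ because $\{1, x'_o, y'_o\}$ is a Weierstrass basis of $\cW'_o$ and $\varphi_m$ respects the filtration. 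Since $L(5\epsilon')$ is a direct summand of $L(6\epsilon')$ as $\cO_L$-module (both are free and the quotient is free of rank one), $L(5\epsilon')_m = L(5\epsilon') / \pi^{m+1}L(5\epsilon')$ injects into $L(6\epsilon')_m$, so $y_1'^2 - x_1'^3 \in L(5\epsilon') + \pi^{m+1} L(6\epsilon') = L(5\epsilon') + \pi^{m+1}L(5\epsilon') = L(5\epsilon')$.

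\textbf{Main obstacle.} The only subtle point is bookkeeping with the semilocal (rather than local) ring $\cO_L$: one must make sure that ``unit mod $\pi$'' implies ``unit'' there, which holds because $\pi$ lies in every maximal ideal of $\cO_L$; and one must make sure the splitting $L(6\epsilon') = L(5\epsilon') \oplus (\text{free rank }1)$ used to deduce $y_1'^2 - x_1'^3 \in L(5\epsilon')$ from its reduction is legitimate — this is exactly the freeness statements recalled just before Lemma~\ref{2.2.6}. Everything else is the routine translation of Lemma~\ref{2.2.2} from the generic fiber to the infinitesimal fibers, so no genuinely hard analytic or arithmetic input is needed beyond what is already set up. I would keep the exposition short, citing Lemma~\ref{2.2.2}(1)--(3) for the characterization of Weierstrass bases and the freeness of the $L(n\epsilon')$ and their quotients for the splitting argument.
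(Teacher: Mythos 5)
Your argument follows the paper's strategy up to the last step, but that last step contains a genuine error. From the fact that the image of $y_1'^2-x_1'^3$ in $L(6\epsilon')_m$ lies in (the image of) $L(5\epsilon')_m$, the correct conclusion is only
\[
y_1'^2-x_1'^3\in L(5\epsilon')+\pi^{m+1}L(6\epsilon'),
\]
and your claimed identity $L(5\epsilon')+\pi^{m+1}L(6\epsilon')=L(5\epsilon')+\pi^{m+1}L(5\epsilon')$ is false: $\pi^{m+1}L(6\epsilon')$ is not contained in $L(5\epsilon')$ (it contains $\pi^{m+1}x'^3$, which has pole order $6$). The injectivity of $L(5\epsilon')_m\to L(6\epsilon')_m$ gives you control of the \emph{intersection} $L(5\epsilon')\cap\pi^{m+1}L(6\epsilon')$, not of the sum, so it cannot upgrade a congruence mod $\pi^{m+1}$ to an exact containment. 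Concretely, writing $x_1'=ax'+b$ and $y_1'=cy'+dx'+e$, the condition $y_1'^2-x_1'^3\in L(5\epsilon')$ is equivalent to the exact equality $c^2=a^3$ in $\cO_L$, whereas your construction only guarantees $c^2\equiv a^3 \pmod{\pi^{m+1}}$; an arbitrary lift of the coefficients will in general leave a nonzero term $(c^2-a^3)x'^3=\pi^{m+1}\mu\, x'^3\notin L(5\epsilon')$.

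This is exactly the point where the paper does something beyond routine lifting: it corrects the naive lifts by the unit $\lambda:=c^2/a^3\in 1+\pi^{m+1}\cO_L$, replacing $(x_1',y_1')$ by $(\lambda x_1',\lambda y_1')$. This rescaling does not change the reductions mod $\pi^{m+1}$, keeps $\{1,\lambda x_1',\lambda y_1'\}$ a basis of $L(3\epsilon')$, and makes the leading coefficients satisfy $(\lambda c)^2=(\lambda a)^3$ exactly, so that Lemma~\ref{2.2.2}(1) applies. (Note that one cannot in general instead choose the lift $c$ to be an exact square root of $a^3$: in residue characteristic $2$ such a Hensel-type adjustment is unavailable, which is precisely why the multiplicative correction by $\lambda$ is used.) With this one modification your proof becomes the paper's proof; everything else in your write-up (the unit criteria via the Jacobson radical of the semilocal ring $\cO_L$, the filtration and freeness bookkeeping) is correct.
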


\begin{proof}Lift arbitrarily 
$\varphi_m(\bar{x}'_o), \varphi_m(\bar{y}'_o)$ 
to $w'\in L(2\epsilon')$ and $z'\in L(3\epsilon')$. There exist $\lambda_i \in \cO_L$
such that $w'=\lambda_1 x' + \lambda_2$, 
$z'=\lambda_3 y' + \lambda_4 w'+ \lambda_5$ (recall that 
$\{1, x', y'\}$ is a Weierstrass basis of $\cW'$). As $\varphi_m$ is an 
isomorphism, $\lambda_1, \lambda_3\in \cO_L^*$. The relation
$(y_o')^2-(x_o')^3\in L(5\epsilon_o')$ implies that 
$\lambda_3^2-\lambda_1^3=0$ in $\cO_{L, m}$. Therefore 
$\lambda:=\lambda_3^2/\lambda_1^3\in 1+\pi^{m+1}\cO_L$. 
Replacing $w'$ (resp. $z'$) by $\lambda w'$ (resp. $\lambda z'$), 
we find new liftings $w, z$ such that $z^2-w^3\in L(5\epsilon')$
and $\{1, w, z\}$ is a basis of $L(3\epsilon')$. This implies that 
$\{1, w, z\}$ is a Weierstrass basis of $\cW'$.
\end{proof}

The next lemma is used in Example \ref{1.4.5} and in 
Proposition~\ref{lift-min}. 

\begin{lemma} \label{iso-mod-N} 
Let $N\ge 0$, let $\cW, \cW_o$ be respective Weierstrass 
models of $E, E_o$ over $\cO_K$ and let 
$\{ 1, x, y \}$, $\{1, x_o, y_o\}$ be corresponding 
Weierstrass basis. Suppose there exists an isomorphism
$\varphi_N : \cW_N\simeq \cW_{o, N}$. Then the following
properties are true: 
\begin{enumerate}[{\rm (1)}] 
\item there exist $u, s, r\in \cO_{K, N}$ such that
$u\in \cO_{K, N}^*$ and 
\[\varphi_N(\bar{x}_o)=u^2\bar{x}+r, \quad 
\varphi_N(\bar{y}_o)=u^3\bar{y}+u^2s\bar{x}+t.\] 
\item If $N\ge 5$, then $\cW$ is minimal if and only if $\cW_o$ is
minimal. 
\end{enumerate}
\end{lemma}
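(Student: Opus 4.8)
The plan is to prove Lemma~\ref{iso-mod-N} by transporting the structure of a Weierstrass model through the isomorphism $\varphi_N$ and then invoking the uniqueness of Weierstrass bases described in Lemma~\ref{2.2.2}. For part~(1), I would first note that, since $\cO_{K,N}$ and $\widehat\cO_{K,N}$ coincide, one may freely assume $\cO_K$ is complete (or even reduce to the honest situation by lifting, as in the proof of Proposition~\ref{choice-of-o}), though here it is cleaner to argue directly over the Artinian base $\cO_{K,N}$. The isomorphism $\varphi_N:\cW_N\to\cW_{o,N}$ pulls back the invertible sheaf $\cO_{\cW_{o,N}}(n\epsilon_{o,N})$; the key point is that $\varphi_N$ maps the section $\epsilon_{o,N}$ to $\epsilon_N$ (the models are pointed), so it identifies the filtered pieces $L(n\epsilon_o)_N$ with $L(n\epsilon)_N$ compatibly with multiplication. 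Hence $\varphi_N^\sharp(\bar x_o)\in L(2\epsilon)_N\setminus\cO_{K,N}$ and $\varphi_N^\sharp(\bar y_o)\in L(3\epsilon)_N\setminus L(2\epsilon)_N$, and the relation $y_o^2-x_o^3\in L(5\epsilon_o)$ together with $y^2-x^3\in L(5\epsilon)$ forces $\{1,\varphi_N^\sharp(\bar x_o),\varphi_N^\sharp(\bar y_o)\}$ to be a Weierstrass basis of $\cW_N$ in the sense of Lemma~\ref{2.2.2}(1), now read modulo $\pi^{N+1}$. (One small subtlety: Lemma~\ref{2.2.2} is stated over a principal ideal domain, so I would either re-examine that the relevant implication — a triple satisfying the pole-order and $z^2-w^3$ conditions is a Weierstrass basis — survives base change to $\cO_{K,N}$, which it does because the defining equation~(\ref{eqK}) is obtained by the same linear-algebra argument, or else lift everything to a complete $\cO_K$ and apply the lemma there before reducing.) Then the change-of-basis computation in Lemma~\ref{2.2.2}(3), valid over any ring for two Weierstrass bases of the \emph{same} model, gives precisely $\varphi_N^\sharp(\bar x_o)=u^2\bar x+r$, $\varphi_N^\sharp(\bar y_o)=u^3\bar y+u^2 s\bar x+t$ with $u\in\cO_{K,N}^*$ and $s,r,t\in\cO_{K,N}$.

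For part~(2), the idea is that minimality of a Weierstrass model is detected by the valuation of its discriminant, and by part~(1) the discriminants of $\cW$ and $\cW_o$ agree up to a twelfth power of a unit \emph{modulo $\pi^{N+1}$}. Concretely, from part~(1) and the discriminant transformation law one gets $\Delta(\cW_o)\equiv u^{12}\Delta(\cW)\pmod{\pi^{N+1}}$ for a unit $u$. Since $\cW$ is a Weierstrass model of $E$ over $\cO_K$, $v_K(\Delta(\cW))\le v_K(\Delta(\cW_{\min}))$; the model $\cW$ is minimal iff this is an equality, and the obstruction to lowering $v_K(\Delta(\cW))$ by a change of variables $u\mapsto \pi u$ drops $v_K(\Delta)$ by exactly $12$. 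So non-minimality of $\cW$ means $v_K(\Delta(\cW))\ge 12$, while minimality means $0\le v_K(\Delta(\cW))\le 11$ (more precisely Tate's algorithm shows the minimal discriminant has valuation $<12$). Thus, provided $N\ge 5$ — actually one needs $N+1\ge 12$? no: the point is more delicate — I would compare $v_K(\Delta(\cW))$ and $v_K(\Delta(\cW_o))$ using the congruence $\Delta(\cW_o)\equiv u^{12}\Delta(\cW)$. If both valuations are $\le N$, the congruence modulo $\pi^{N+1}$ pins them down exactly and they are equal (as $v_K(u^{12})=0$). The hypothesis $N\ge 5$ is what guarantees this works: a minimal Weierstrass equation can always be chosen with $v_K(a_i)$ bounded so that $v_K(\Delta(\cW))$, when $\cW$ is \emph{not} minimal, is still visible — I would need to check that if $\cW$ is minimal then $v_K(\Delta(\cW))\le N$ fails to be an issue because one compares with the minimal model and $N\ge 5$ suffices to see the relevant $a_i$ and hence whether a common factor can be extracted.

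Let me restate the part~(2) argument more carefully, since that is where the bound $N\ge 5$ enters and is the main obstacle. Write $\cV$ for the minimal Weierstrass model of $E$ (so $v_K(\Delta(\cV))=v_K(\Delta(\cW))$ iff $\cW$ is minimal). By Lemma~\ref{2.2.2}(3) applied to the two Weierstrass bases $\{1,x,y\}$ of $\cW$ and a Weierstrass basis of $\cV$, one has $x=c^2 x_{\cV}+r_0$, $y=c^3 y_{\cV}+c^2 s_0 x_{\cV}+t_0$ with $c,r_0,s_0,t_0\in\cO_K$ and $\Delta(\cW)=c^{12}\Delta(\cV)$; $\cW$ is minimal iff $c\in\cO_K^*$. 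The congruence from part~(1) combined with the same for $\cW_o$ versus its minimal model shows that the ``scaling parameters'' $c$ and $c_o$ are related by $c_o\equiv (\text{unit})\cdot c\pmod{\pi^{?}}$ once one tracks the $a_i$'s. Here is the clean way: the $a_i$-coefficients of the equation of $\cW$ and those of $\cW_o$ are matched modulo $\pi^{N+1}$ by part~(1), hence $v_K(a_i(\cW))=v_K(a_i(\cW_o))$ for every $i$ provided each $v_K(a_i)\le N$. Since one may choose the Weierstrass equation of a \emph{non-minimal} model so that scaling down by $\pi$ is possible exactly when $\pi^i\mid a_i$ for all $i$ and $\pi^{12}\mid\Delta$ with the Kraus/Tate conditions, and since for the minimal equation $\min_i\lceil v_K(a_i)/i\rceil=0$, the relevant coefficients to inspect all have valuation at most... here $N\ge 5$ is exactly what is needed: among $a_1,\dots,a_6$, non-minimality forces $v_K(a_i)\ge i$ for the witnessing coefficients, and the smallest such $i$ with $a_i\ne 0$ satisfies $i\le 6$, but to \emph{see} that the valuation is exactly $\ge i$ (not artificially large) one needs $v_K(a_i)\le N$; taking $N\ge 5$ handles $a_1,\dots,a_6$ with the normalization $v_K(a_i)<i$ wherever minimality already holds. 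Therefore the change-of-variable test for minimality — which depends only on the $a_i\bmod\pi^{6}$ roughly, hence on $\cW_N$ with $N\ge 5$ — gives the same answer for $\cW$ and $\cW_o$. I expect the bookkeeping of which coefficients must be inspected, and verifying $N\ge 5$ is the exact threshold (rather than, say, $N\ge 11$), to be the one genuinely delicate point; everything else is a formal consequence of Lemma~\ref{2.2.2} and the fact that $\varphi_N$ respects the pointed structure.
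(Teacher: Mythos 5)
Part (1) of your proposal is correct and is essentially the paper's argument: transport the basis through $\varphi_N$ (using that it respects the pointed structure and hence the filtration of $L(3\epsilon)_N$ by pole order), extract $\bar\lambda_3^2=\bar\lambda_1^3$ from the condition $y^2-x^3\in L(5\epsilon)$, and apply the change-of-basis formulas of Lemma~\ref{2.2.2}(3); the paper routes this through Lemma~\ref{lift-w} rather than working over the Artinian base, but the content is the same.

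Part (2) is where there is a genuine gap. The assertions you lean on are false or unusable: ``minimality means $0\le v_K(\Delta(\cW))\le 11$'' fails already for type $\mathrm{I}_n$ with $n\ge 12$, and more generally the minimal discriminant can have arbitrarily large valuation, so neither $v_K(\Delta(\cW))$ nor the individual $v_K(a_i)$ need be $\le N$; consequently the congruence $\Delta(\cW_o)\equiv u^{12}\Delta(\cW)\pmod{\pi^{N+1}}$ carries no information when both sides vanish modulo $\pi^{N+1}$, and the claim $v_K(a_i(\cW))=v_K(a_i(\cW_o))$ ``provided each $v_K(a_i)\le N$'' cannot be salvaged because the proviso can fail. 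The point, which you gesture at in your last sentence but never establish, is that one must test \emph{divisibility}, not compute exact valuations. The paper's proof is short: minimality is unchanged under passage to the strict henselization (here $k$ perfect is used), so one may assume $k$ algebraically closed; then by Tate's algorithm (\S 7--8), $\cW$ is non-minimal if and only if there exist $r,s,t\in\cO_K$ such that the $(u,r,s,t)=(1,r,s,t)$-transformed coefficients satisfy $v(a_i')\ge i$ for all $i$. Since $i\le 6$, each condition $\pi^i\mid a_i'(r,s,t)$ depends only on the $a_j$ and on $r,s,t$ modulo $\pi^6$, so the existence of such $r,s,t$ is determined by $\cW_5$ (and is insensitive to the unit change of basis produced in part (1), non-minimality being a property of the model, not of the chosen basis). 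That single observation is the actual content behind the threshold $N\ge 5$ and replaces all of the valuation bookkeeping in your write-up.
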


\begin{proof} (1) is an immediate consequence of 
Lemma~\ref{lift-w} and of Lemma~\ref{2.2.2}(3). 

(2) First the minimality of $\cW$ can be checked over the
strict henselization of $\cO_K$. As $k$ is perfect, we
can suppose $k$ is algebraically closed. 
By Tate's algorithm \cite{Tate}, \S 7-8, $\cW$ is not
minimal if and only if there exist $r, s, t \in \cO_K$ 
such that $v(a_i')\ge i$. This condition is checked
modulo $\pi^6$, so can be detected in $\cW_5$.   
\end{proof}

\begin{lemma} \label{snake} Let 
$0 \to M \to H \to T \to 0$ 
be an exact sequence of $\cO_K$-modules such that $\pi^r T=0$
for some $r\ge 1$. Then for all $m\ge r$, 
$\ker(M_{m} \to H_{m})\subseteq 
\ker(M_{m} \to M_{m-r})$. 
\end{lemma}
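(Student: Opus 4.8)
The plan is to exploit the injectivity of $M\to H$ (exactness at the left) so as to regard $M$ as a submodule of $H$, and then to identify both kernels appearing in the statement with explicit submodules of $M_m=M/\pi^{m+1}M$. First I would note that an element of $M_m$ lying in $\ker(M_m\to H_m)$ is represented by some $x\in M$ whose image in $H$ lies in $\pi^{m+1}H$; that is, $\ker(M_m\to H_m)$ is the image in $M_m$ of the submodule $M\cap\pi^{m+1}H$ of $M$. On the other side, $\ker(M_m\to M_{m-r})$ is, by definition of the transition map, the image in $M_m$ of $\pi^{m-r+1}M$ (here $m\ge r$ guarantees $0\le m-r+1\le m+1$). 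So the lemma is reduced to the single inclusion
\[
M\cap\pi^{m+1}H\subseteq \pi^{m-r+1}M.
\]

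To prove this inclusion, I would take $x\in M\cap\pi^{m+1}H$ and write $x=\pi^{m+1}h$ for some $h\in H$. Since $\pi^r T=0$, the image of $\pi^r h$ in $T=H/M$ is zero, i.e. $\pi^r h\in M$. Therefore $x=\pi^{m+1-r}\cdot(\pi^r h)\in\pi^{m-r+1}M$, which is exactly what is needed; passing to images in $M_m$ then gives $\ker(M_m\to H_m)\subseteq\ker(M_m\to M_{m-r})$.

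I do not anticipate a genuine obstacle here: the argument is a short diagram chase resting only on the observation $\pi^r H\subseteq M$ (which is immediate from $\pi^r T=0$). The one point deserving a line of justification is the identification of $\ker(M_m\to H_m)$ with the image of $M\cap\pi^{m+1}H$, since the map $M\to H$ being injective does not make $M_m\to H_m$ injective; spelling out representatives handles this cleanly.
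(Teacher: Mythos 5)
Your proof is correct, and it is essentially the paper's argument made explicit: the paper simply says ``Use the Snake Lemma,'' whose connecting map $T[\pi^{m+1}]\to M_m$ identifies $\ker(M_m\to H_m)$ with the image of $M\cap\pi^{m+1}H$, exactly the reduction you carry out by hand. Your observation that $\pi^r H\subseteq M$ forces $M\cap\pi^{m+1}H\subseteq\pi^{m-r+1}M$ is the same computation one performs when evaluating that connecting map, so the two proofs coincide in substance.
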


\begin{proof} Use the Snake Lemma. 
\end{proof}
 
\begin{theorem}\label{2.3.2} Let $N\ge 0$. If 
\[m\ge N+12[v_K(\D_{L/K})]+19,\] 
then $\cW_{N}$ is determined by the $G$-action on $\cW'_{m}$. 
\end{theorem}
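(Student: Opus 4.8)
The strategy is to reconstruct a Weierstrass basis of $\cW$ from data available over $\cO_K$ modulo a high power of $\pi$, using the semi-linear module theory of \S\ref{semi-linear}. The key observation is that by Lemma~\ref{2.2.6} a Weierstrass basis of a Weierstrass model of $E$ over $\cO_K$ consists of elements of $L(2\epsilon')^G$ and $L(3\epsilon')^G$, so what we really need is to understand the $\cO_K$-modules $L(n\epsilon')^G$ modulo $\pi^{N+1}$ from the $G$-module $L(n\epsilon')_m$ for $m\gg N$. Proposition~\ref{2.1.4}, applied to $M=L(n\epsilon')$ (which is free, hence flat, over $\cO_L$), tells us precisely this: with $h=2[v_K(\D_{L/K})]$, for $m\ge N'+h$ the $\cO_K$-module $(L(n\epsilon')^G)_{N'}$ is recovered as $(L(n\epsilon')_m)^G/(\pi^{N'+1})$. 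So the plan is: from $(\cW'_m,G)$ recover the filtered ring $\bigoplus_n L(n\epsilon')_m$ with its semi-linear $G$-action (and the multiplication maps), take $G$-invariants and quotient by the appropriate power of $\pi$ to recover the $L(n\epsilon')^G$ modulo a slightly smaller power of $\pi$, then choose a basis $\{1,w_0,z_0\}$ of $L(3\epsilon')^G$ adapted to the filtration, and finally rescale as in the proof of Theorem~\ref{compare-m} to produce an honest Weierstrass basis $\{1,w,z\}$ of a Weierstrass model $\cZ$ of $E$; since all of this can be done the same way over $K_o$, one gets an isomorphism $\cW_N\simeq\cW_{o,N}$ (after also invoking Lemma~\ref{iso-mod-N}(2) to see minimality is detected at finite level).

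\medskip

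\textbf{Steps, in order.} First I would reduce to the case $K$ complete, since everything in sight is unchanged modulo $\pi^{m+1}$. Second, from the pointed scheme $\cW'_m$ reconstruct $\bigoplus_{n\le 6}\Gamma(\cW',\cO_{\cW'}(n\epsilon'))_m =: L(n\epsilon')_m$ as a graded $\cO_{L,m}$-algebra with semi-linear $G$-action; this is where one uses that $\cW'_m$ comes equipped with the section $\epsilon'_m$, via the structure of $\cO_{\cW'}(n\epsilon')$, and one needs a small cohomological input to know that $\Gamma(\cW'_m,\cdot)=\Gamma(\cW',\cdot)_m$ (flatness plus vanishing of $\rH^1$ of the relevant line bundles on a genus-one curve). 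Third, apply Proposition~\ref{2.1.4} with $M=L(2\epsilon')$ and $M=L(3\epsilon')$: for $m\ge N_1+h$ we recover $(L(2\epsilon')^G)_{N_1}$ and $(L(3\epsilon')^G)_{N_1}$, compatibly with the filtration and multiplication, where $h=2[v_K(\D_{L/K})]$. Fourth, lift to a basis $\{1,\bar w_0,\bar z_0\}$ of $(L(3\epsilon')^G)_{N_1}$ with $\{1,\bar w_0\}$ a basis of $(L(2\epsilon')^G)_{N_1}$ — possible because the successive quotients are free of rank one — and carry out the $\beta_1,\beta_3$ rescaling of Theorem~\ref{compare-m} at this finite level to get $\{1,\bar w,\bar z\}$ which, by Lemma~\ref{2.2.6} read modulo $\pi^{N_1+1}$, defines $\cZ_{N_1}$ for some Weierstrass model $\cZ$ of $E$. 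Fifth, account for the gap between $\cZ$ and $\cW$: by Corollary~\ref{bound-c}(1) a Weierstrass basis of $\cW$ differs from one of $\cW'$ by coefficients $b_i\in\cO_L$ with $v_L(b_1),v_L(b_3)$ bounded in terms of $v_L(\D_{L/K})$ and $e_{L/K}$, so Lemma~\ref{2.2.2}(3) converts between bases of $\cZ$ and $\cW$, and Lemma~\ref{snake} quantifies how much precision is lost; this is where the extra $12[v_K(\D_{L/K})]+19$ comes from, combining $h$ with the factor of $12$ from $v_K(\Delta(\cZ))-v_K(\Delta(\cW))\le 12(v_K(\D_{L/K})+1)$. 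Sixth, do all of the above verbatim over $K_o$ using (Iso$_m$); since the constructions are functorial in the input data, the resulting Weierstrass models $\cZ$ and $\cZ_o$ have $\cZ_N\simeq\cZ_{o,N}$, and passing to the associated minimal models (Lemma~\ref{iso-mod-N}(2), valid once $N\ge 5$, which the bound guarantees) gives $\cW_N\simeq\cW_{o,N}$.

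\medskip

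\textbf{Main obstacle.} The bookkeeping of precision losses is the crux: each of the three passages — from $\cW'_m$ to $(L(n\epsilon')_m)^G$ (loss $h$), from the invariants to a Weierstrass basis of $\cW'$ itself and then to one of $\cW$ via the bounded change of coordinates of Corollary~\ref{bound-c} (loss proportional to $12v_K(\D_{L/K})$, via Theorem~\ref{compare-m} and Lemma~\ref{snake}), and the final minimality check plus the $u^{12}$-rescaling of Lemma~\ref{2.2.2}(3) — costs a controlled number of $\pi$-adic digits, and the job is to package these into the single clean bound $m\ge N+12[v_K(\D_{L/K})]+19$. A secondary but essential point is the reconstruction of the graded algebra $\bigoplus_n L(n\epsilon')_m$ from the pointed scheme $\cW'_m$: one must check that the finite-level fiber determines the sections of $\cO_{\cW'}(n\epsilon')$ modulo $\pi^{m+1}$ compatibly with all structure, which rests on the flatness of $\cW'/\cO_L$ and the triviality of $\rH^1$ for these line bundles on the fibers — genuine but standard, and it is why the section $\epsilon'$ is carried along throughout as part of the data.
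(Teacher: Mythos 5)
Your overall outline matches the paper's in spirit — reduce to the graded algebra $\oplus_n L(n\epsilon')_m$ with its semi-linear $G$-action, apply Proposition~\ref{2.1.4} to recover $(L(n\epsilon')^G)_{N_1}$, build a Weierstrass basis by a rescaling as in Theorem~\ref{compare-m}, and track the precision losses. But there is a genuine gap at the decisive step. Your construction (choose a filtration-adapted basis $\{1,w_0,z_0\}$ of $L(3\epsilon')^G$ and rescale by $\beta_1,\beta_3$) produces \emph{some} Weierstrass model $\cZ$ of $E$, not the minimal one, and your Step 5 — ``account for the gap between $\cZ$ and $\cW$'' — is not supported by the tools you cite. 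Corollary~\ref{bound-c}(1) compares $\cW$ with $\cW'$, not with an auxiliary $\cZ$; Lemma~\ref{2.2.2}(3) tells you that $\cZ$ and $\cW$ differ by a change of variables $(u,r,s,t)$ over $\cO_K$ with $v_K(u)$ bounded, but that change of variables is an object over $\cO_K$ that is not visibly determined by $\cZ_{N_1}$; and Lemma~\ref{snake} says nothing about minimalization. To close this gap along your route you would essentially have to prove a congruence version of Laska/Tate minimalization (that $\cZ_{N'}$ determines $\cW_N$ for $N'-N$ controlled by $v_K(\Delta(\cZ))-v_K(\Delta(\cW))$), which is a nontrivial extra argument and is nowhere in your sketch. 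Lemma~\ref{iso-mod-N}(2) only detects \emph{whether} a model is minimal; it does not produce the minimal model from a non-minimal one at finite level.

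The paper sidesteps this entirely, and that is the idea your proposal is missing. Instead of building a basis from scratch on each side, one takes an honest Weierstrass basis $\{1,x_o,y_o\}$ of the minimal model $\cW_o$, views it in $(L(3\epsilon'_o)^G)_{m_1}$, transports it through $\varphi_{m_1}$, lifts to $w,z\in L(3\epsilon')^G$, and corrects by the scalar $\lambda=c_3^2/c_1^3\in 1+\pi^{m_2+1}\cO_K$ to get a Weierstrass basis of a model of $E$. The crucial Step 3 then shows this is a basis of $L(3\epsilon)$ itself: the transported image of $\Img(\rho_{o,m_2})=\Img\bigl(L(3\epsilon_o)_{m_2}\to (L(3\epsilon'_o)^G)_{m_2}\bigr)$ equals $\Img(\rho_{m_2})$, and since $L(3\epsilon')^G/L(3\epsilon)$ is killed by $\pi^{2[v_K(\D_{L/K})]+4}$ (Corollary~\ref{bound-c}(2)), Nakayama's lemma forces $\{1,x,y\}$ to generate $L(3\epsilon)$, i.e.\ to be a Weierstrass basis of $\cW$. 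Minimality is thus obtained structurally, with no minimalization procedure needed. A secondary weakness: your phrase ``do all of the above verbatim over $K_o$\dots the constructions are functorial'' glosses over the fact that a filtration-adapted basis is only unique up to a unipotent-plus-unit change of coordinates, so the two sides only match if the choices are explicitly transported through $\varphi$ — which is again exactly what the paper's transport of $\{1,x_o,y_o\}$ accomplishes.
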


\begin{proof} By hypothesis, we have isomorphisms (Iso$_{m}$)
(hence (Iso$_i$) for all $i\le m$). Denote by 
$\rho_i$ the canonical maps $L(3\epsilon)_i \to (L(3\epsilon')^G)_i$ and
by $\rho_{o, i}$ the analogue maps for $E_o$. We have a commutative
diagram 
\[
\xymatrix{
L(3\epsilon_o)_{m}  \ar[r]^{\rho_{o, m}} & (L(3\epsilon'_o)^G)_{m} 
\ar@{^{(}->}[r] & (L(3\epsilon'_o)_m)^G  \ar[d]^{\varphi_{m}} 
\ar@{^{(}->}[r] & L(3\epsilon'_o)_m \ar[d]^{~\varphi_{m}} \\ 
L(3\epsilon)_{m}  \ar[r]^{\rho_{m}} & (L(3\epsilon')^G)_{m} 
\ar@{^{(}->}[r] & (L(3\epsilon')_m)^G  \ar@{^{(}->}[r] & L(3\epsilon')_m \\ 
}
\]
where the vertical arrows are isomorphisms. We want to
complete this diagram with an isomorphism 
$L(3\epsilon_o)_{m_2}\simeq L(3\epsilon)_{m_2}$ (for some $m_2\le m$)
sending a Weierstrass basis to a Weierstrass basis. 
Let $\{1, x_o, y_o\}$ be a Weierstrass basis of $\cW_o$. 
\smallskip 

{\bf Step 1}. Let $\D=\D_{L/K}$. Let $m_1=m-2[v_K(\D)]$. 
We first construct images of $x_o, y_o$ in $(L(3\epsilon')^G)_{m_1}$. 
According to Proposition \ref{2.1.4}, the above commutative diagram
induces a new commutative diagram at level $m_1$ 
with isomorphic vertical arrows 
\[
\xymatrix{
L(3\epsilon_o)_{m_1}  \ar[r]^{\rho_{o, m_1}} & (L(3\epsilon'_o)^G)_{m_1} 
\ar[d]^{\varphi_{m_1}} 
\ar@{^{(}->}[r] & (L(3\epsilon'_o)_{m_1})^G  \ar[d]^{\varphi_{m_1}}
\ar@{^{(}->}[r] & L(3\epsilon'_o)_{m_1} \ar[d]^{~\varphi_{m_1}} \\ 
L(3\epsilon)_{m_1}  \ar[r]^{\rho_{m_1}} & (L(3\epsilon')^G)_{m_1} 
\ar@{^{(}->}[r] & (L(3\epsilon')_{m_1})^G  \ar@{^{(}->}[r] & L(3\epsilon')_{m_1}  \\ 
}
\]
Let $w\in L(2\epsilon')^G, z\in L(3\epsilon')^G$ be liftings of 
$\varphi_{m_1}(\bar{x}_o)$ and $\varphi_{m_1}(\bar{y}_o)$. 
\smallskip 

{\bf Step 2}. Now we modify $w, z$ to $x, y$ so that 
$\{1, x, y\}$ is a Weierstrass basis of a Weierstrass model 
of $E$ over $\cO_K$. 
We can write 
\[x_o=b_{o,1}x_o'+b_{o, 2}, \quad y_o=b_{o,3}y_o'+b_{o,4}x_o'+
b_{o, 5}, \quad b_{o,i}\in \cO_{L_o}\]
where $\{1, x_o', y_o'\}$ is a Weierstrass basis of $\cW'$. 
By Corollary~\ref{bound-c} (1), Lemmas~\ref{disc} and 
\ref{2.2.2} (3), we have 
\[
v_{L_o}(b_{o,1})\le 2(v_{L_o}(\D_{L_o/K_o})+e_{L_o/K_o}-1)\le m_1,
\quad v_{L_o}(b_{o,1})\le v_{L_o}(b_{o,4}).
\]
Let $\{1, x', y'\}$ be a Weierstrass basis of 
$\cW'$ lifting 
$\{1, \varphi_m(\bar{x}_o'), \varphi_m(\bar{y}_o')\}$ 
(Lemma~\ref{lift-w}). Then 
\begin{equation}\label{eq1} 
w=c_1x'+c_2, \quad z=c_3y'+c_4x'+c_5, 
\quad c_i\in \cO_L. 
\end{equation}
For all $i\le 5$, $\bar{c}_i=\theta_{m_1}(\bar{b}_{o,i})\in 
\cO_{L,m_1}$. Therefore 
\begin{equation}\label{eq2} 
v_{L}(c_{1})\le 2(v_L(\D)+e_{L/K}-1)\le m_1, 
\quad v_{L}(c_{1})\le v_{L}(c_{4}).
\end{equation}
We have 
\[\bar{c}_3^2-\bar{c}_1^3=\theta_{m_1}(\bar{b}_{o,3})^2-
\theta_{m_1}(\bar{b}_{o,1})^3=0\in \cO_{L,m_1}\] 
(Lemma~\ref{2.2.2}(3)). 
Writing $w, z$ in a Weierstrass basis of $E$ (with coefficients
in $K$), we see that $\lambda:=c_3^2/c_1^3\in K$. Moreover, the
inequality on $v_L(c_1)$ implies that   
$v_L(c_1^3)/e <  6[v_K(\D)]+12$, hence 
\[\lambda\in 1+\pi^{m_1+1-[6v_K(\D)]-11}\cO_K.\] 
Let $x=\lambda w$ and $y=\lambda z$ and let 
$m_2=m_1-6[v_K(\D)]-11$. 
Then $y^2-x^3\in L(5\epsilon')$ and $x, y\in L(3\epsilon')^G$ coincide with $w, z$ 
in $(L(3\epsilon')^G)_{m_2}$.
Multiplying Equation (\ref{eq1}) above by $\lambda$,
and using the second inequality of Equation (\ref{eq2}) and 
Lemma~\ref{2.2.6}, we see that $\{1, x, y\}$ is 
a Weierstrass basis of a Weierstrass model $\cZ$ of $E$ over 
$\cO_K$. This implies that $x\in L(2\epsilon), y\in L(3\epsilon)$. 
\smallskip 

{\bf Step 3}. Let us show that $\{1, x, y\}$ is a Weierstrass
basis of $\cW$. The above construction shows that we have a 
canonical commutative diagram 
\[
\xymatrix{
(L(3\epsilon_o')^G)_{m_2} \ar[r]^{\varphi_{m_2}} & (L(3\epsilon')^G)_{m_2} 
\ar[r]^{\varphi^{-1}_{m_2}} & (L(3\epsilon_o')^G)_{m_2} \\ 
\Img(\rho_{o, m_2}) \ar@{^{(}->}[u] \ar[r]& 
\Img(\rho_{m_2}) \ar@{^{(}->}[u] \ar[r]&  
\Img(\rho_{o, m_2}) \ar@{^{(}->}[u] \\ 
L(3\epsilon_o)_{m_2}  \ar@{->>}[u]^{\rho_{m_2}} & 
L(3\epsilon)_{m_2}  \ar@{->>}[u]^{\rho_{m_2}} & 
L(3\epsilon_o)_{m_2}  \ar@{->>}[u]^{\rho_{m_2}}  
}
\]
which implies that 
$\varphi_{m_2}(\Img(\rho_{o, m_2}))=\Img(\rho_{m_2})$. Thus 
$\{1, x, y\}$ generate $L(3\epsilon)$ in $(L(3\epsilon')^G)_{m_2}$. 
As $m_2\ge 2[v_K(\D)]+4$, it follows from 
Corollary~\ref{bound-c}(2) that 
\[\ker(L(3\epsilon)\to (L(3\epsilon')^G)_{m_2})\subseteq 
\pi^{m_2+1}L(3\epsilon')^G=\pi (\pi^{m_2}L(3\epsilon')^G)\subseteq
\pi L(3\epsilon).\] 
By Nakayama's lemma, $\{1, x, y\}$ generate, hence is 
a basis of, $L(3\epsilon)$. Therefore $\{1, x, y\}$ is a 
Weierstrass basis of $\cW$. 
\smallskip 

{\bf Last step}: Let's show $\cW_N\simeq \cW_{o,N}$. 
Let 
\[y_o^2+(a_{o,1}x_o+a_{o,3})y_o=x_o^3+a_{o,2}x_o^2+a_{o,4}x_o+a_{o,6}\] 
be an equation of $\cW_o$. Let $a_i\in \cO_K$ be such that
$\bar{a}_i=\varphi_{m_2}(\bar{a}_{o,i})$. 
Then 
\[y^2+({a}_{1}{x}+a_{3})y=x^3+a_2x^2+a_4x+a_6\] 
holds in $(L(6\epsilon')^G)_{m_2}$. By Lemma~\ref{snake} and 
Corollary~\ref{bound-c}(2), the same relation holds in 
$L(6\epsilon)_{m_3}$, where $m_3=m_2-(4[v_K(\D)]+8)$. 
Therefore $\cW_{m_3}\simeq \cW_{o,m_3}$. As $m_3\ge N$, 
we have $\cW_{N}\simeq \cW_{o, N}$. 
\end{proof}

\begin{remark}The bound on $m$ in \ref{2.3.2} is not optimal. 
Indeed, when $L/K$ is unramified, $\cW_N$ is determined by the
$G$-action on $\cW'_m$ with $m=N$. 
\end{remark}

\begin{proposition}Suppose $E$ has semi-stable reduction over $\cO_K$. 
Then for any $N\ge 0$, $\cW_N$ is determined by $(\cW'_m, G)$ if 
$m\ge 2[v_K(\D_{L/K})]+N$. In particular, $\cW_N$ is determined by 
$(\cW'_N, G)$ if $L/K$ is tamely ramified. 
\end{proposition}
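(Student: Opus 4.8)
The plan is to revisit the proof of Theorem~\ref{2.3.2} and notice that, when $E$ has semi-stable reduction over $\cO_K$, the large correction term $12[v_K(\D_{L/K})]+19$ collapses to $2[v_K(\D_{L/K})]$. The point is that semi-stability kills the "blow-up" contribution coming from $v_L(\Delta(\cW))-v_L(\Delta(\cW'))$: if $E$ has good reduction then $v_L(\Delta(\cW'))=0=v_L(\Delta(\cW))$, and if $E$ has multiplicative reduction then $v_L(\Delta(\cW'))=v_L(\Delta(\cW))$ as well (this is the standard fact that the minimal discriminant valuation is a birational invariant of the reduction type and is unchanged under unramified or semi-stable base change — more precisely, since $E_L$ and $E$ both have semi-stable reduction, $v_K(\Delta(\cW))=v_L(\Delta(\cW'))/e_{L/K}$, so in $L$ the two valuations agree). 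Consequently, in the notation of the proof of Theorem~\ref{2.3.2}, the quantities $v_L(b_1)$, $v_L(b_3)$ of Corollary~\ref{bound-c}(1) are \emph{zero}: a Weierstrass basis of $\cW$ is already a Weierstrass basis of $\cW'$ up to units of $\cO_L$, i.e. $x=u^2 x'+\dots$, $y=u^3 y'+\dots$ with $u\in\cO_L^*$.

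First I would run Steps 1--3 of the proof of Theorem~\ref{2.3.2} verbatim, but with the bookkeeping simplified: starting from $(\mathrm{Iso}_m)$, Proposition~\ref{2.1.4} with $h=2[v_K(\D_{L/K})]$ gives the descent from level $m$ to level $m_1:=m-2[v_K(\D_{L/K})]\ge N$, producing $G$-invariant liftings $w\in L(2\epsilon')^G$, $z\in L(3\epsilon')^G$ of $\varphi_{m_1}(\bar x_o),\varphi_{m_1}(\bar y_o)$. Now in Step~2, because $v_{L_o}(b_{o,1})=0$ (semi-stability of $E_o$, via Lemma~\ref{iso-mod-N}(2) together with Lemma~\ref{disc}, so the reduction type is preserved), the coefficient $c_1$ in $w=c_1 x'+c_2$ is a unit; hence $\lambda:=c_3^2/c_1^3\in 1+\pi^{m_1+1}\cO_K$ with no loss of levels, and $x=\lambda w$, $y=\lambda z$ already satisfy $y^2-x^3\in L(5\epsilon')$ and, by Lemma~\ref{2.2.6}, form a Weierstrass basis of a Weierstrass model $\cZ$ of $E$ over $\cO_K$, all at level $m_1$. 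For Step~3, the annihilator of $L(3\epsilon')^G/L(3\epsilon)$ in Corollary~\ref{bound-c}(2) is again bounded by $\pi^{2[v_K(\D)]+3}$ in general, but when $v_L(b_1)=0$ this module is actually killed by $\D_{L/K}$ alone (it is squeezed between $L(3\epsilon)\otimes\cO_L$ and $L(3\epsilon')^G\cO_L$, and the proof of Corollary~\ref{bound-c} shows the relevant $\beta_i$ have valuation $\le v_L(\D)$); I would check that $m_1\ge v_K(\D)$ suffices for Nakayama to conclude $\{1,x,y\}$ is a Weierstrass basis of $\cW$, and for the last step $\cW_{m_1}\simeq \cW_{o,m_1}$ transfers the Weierstrass equation without further loss because the coefficients $a_i$ lie in $L(6\epsilon)^G$ and the analogue of Corollary~\ref{bound-c}(2) for $L(6\epsilon')^G/L(6\epsilon)$ is also trivialized by semi-stability.

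Finally, since all the level drops except the single unavoidable $2[v_K(\D_{L/K})]$ of Proposition~\ref{2.1.4} have disappeared, we obtain $\cW_N\simeq \cW_{o,N}$ whenever $m\ge N+2[v_K(\D_{L/K})]$, which is exactly the claim; and if $L/K$ is tamely ramified then $v_K(\D_{L/K})=(e_{L/K}-1)/e_{L/K}<1$, so $[v_K(\D_{L/K})]=0$ and $m=N$ works. The main obstacle I anticipate is Step~3: one must verify carefully that semi-stability really does reduce the exponent of $L(3\epsilon')^G/L(3\epsilon)$ (and of $L(6\epsilon')^G/L(6\epsilon)$) from the crude $2[v_K(\D)]+O(1)$ bound down to $[v_K(\D)]$ with no additive constant, since otherwise a stray $+3$ or $+4$ would survive; this amounts to re-examining the proof of Corollary~\ref{bound-c}(2) in the case $t\in\cO_K^*$, where the integers $n,m$ there vanish. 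A secondary point to be careful about is that "semi-stable reduction over $\cO_K$" must be shown to be detected by $(\cW'_m,G)$ in the first place, i.e. that $E_o$ is also semi-stable — this follows from Lemma~\ref{iso-mod-N}(2) (applied after the reductions of Step~2, once $m_1\ge 5$) combined with the fact that the reduction type is read off from $\cW_5$.
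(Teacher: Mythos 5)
Your strategy is the right one and is, in essence, the paper's: semi-stability forces $v_L(\Delta(\cW))=v_L(\Delta(\cW'))$, so the only level loss surviving from the machinery of Theorem~\ref{2.3.2} is the $h=2[v_K(\D_{L/K})]$ of Proposition~\ref{2.1.4}. But your execution stops short precisely at the point you yourself flag as ``the main obstacle'', and that is a genuine gap: you only assert that $L(3\epsilon')^G/L(3\epsilon)$ is killed by $\D_{L/K}$. With that weaker statement, the Nakayama argument of Step~3 and the transfer of the Weierstrass equation in the last step (via Lemma~\ref{snake}) each bleed a further $\sim[v_K(\D_{L/K})]$ levels, and you would only obtain a bound of the shape $m\ge N+3[v_K(\D_{L/K})]+O(1)$, not the asserted $m\ge N+2[v_K(\D_{L/K})]$. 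What is actually true, and what the paper uses, is that this quotient is \emph{zero}: in the semi-stable case the minimal Weierstrass model commutes with base change along Dedekind domains, so $\cW'=\cW\times_{\Spec\cO_K}\Spec\cO_L$, hence $L(n\epsilon')=L(n\epsilon)\otimes_{\cO_K}\cO_L$ and $L(n\epsilon')^G=L(n\epsilon)$ on the nose. Your own observation that $b_1,b_3\in\cO_L^*$ gives this in one line: $L(n\epsilon)\otimes_{\cO_K}\cO_L\to L(n\epsilon')$ is then a surjection of free $\cO_L$-modules of equal rank, hence an isomorphism. Once this is in place, Step~3 and the equation transfer are lossless, the image of a Weierstrass basis of $\cW_o$ under $\varphi_{m_1}$ lifts directly to a Weierstrass basis of $\cW$ as in Lemma~\ref{lift-w}, and $\cW_{m_1}\simeq\cW_{o,m_1}$ with $m_1=m-2[v_K(\D_{L/K})]\ge N$, which is the claim.

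A second, smaller problem is your justification that $E_o$ is semi-stable. You need this \emph{before} you can assert $v_{L_o}(b_{o,1})=0$ in Step~2, yet you propose to deduce it from Lemma~\ref{iso-mod-N}(2) ``after the reductions of Step~2''; that lemma presupposes an isomorphism $\cW_N\simeq\cW_{o,N}$, which is the conclusion you are after, so as written the argument is circular. The paper avoids this by reading semi-stability of $E_o$ off the special fiber of $\cW_o$ at the outset (a Weierstrass model whose special fiber is smooth or nodal is minimal and the curve is semi-stable), which requires only level-$0$ data and no prior comparison of the models.
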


\begin{proof} Suppose we are given a system of isomorphisms
(Iso$_m$). Then the special fiber of $\cW_o$ is semi-stable because
it is isomorphic to the special fiber of $\cW$. Hence $\cW_o$ is 
semi-stable. 
The minimal Weierstrass model $\cW$ commutes with base changes by
Dede\-kind domains when $E$ has semi-stable reduction. 
Therefore $\cW'=\cW_{\cO_L}$, $\cW'_o=(\cW_o)_{\cO_{L_o}}$, 
and 
\[\rH^1(G, L(6\epsilon'))=\rH^1(G, L(6\epsilon)\otimes\cO_L)
=\rH^1(G, \cO_L)\otimes L(6\epsilon)\] 
is killed by $\pi^{2[v_K(\D_{L/K})]}$ (Proposition \ref{2.1.6}). 
Let $m_1=m-2[v_K(\D_{L/K})]\ge N$. As in the proof Theorem \ref{2.3.2}, 
we have a commutative diagram 
\[
\xymatrix{
L(3\epsilon_o)_{m_1}= (L(3\epsilon'_o)^G)_{m_1} 
\ar[d]^{\varphi_{m_1}} 
\ar@{^{(}->}[r] & (L(3\epsilon'_o)_{m_1})^G  \ar[d]^{\varphi_{m_1}}  \\ 
L(3\epsilon)_{m_1}=(L(3\epsilon')^G)_{m_1} 
\ar@{^{(}->}[r] & (L(3\epsilon')_{m_1})^G  \\ 
}
\]
As in Lemma \ref{lift-w}, the image of a Weierstrass basis 
$\{1, x_o, y_o\}$ of $\cW_o$ by $\varphi_{m_1}$ lifts to 
a Weierstrass basis $\{1, x, y \}$ of $\cW$. Therefore 
$\cW_{m_1}\simeq \cW_{o, m_1}$.  
\end{proof}

\begin{theorem}\label{2.3.4}
Suppose $K$ is henselian, $\mathrm{char}(K)=0$, and the residue field 
is algebraically closed of characteristic $p>0$. 
Then there exists a positive integer $l$, 
depending only on the absolute ramification index $v_{K}(p)$, 
such that for any elliptic curve $E$ over $K$, and for $L/K$ the minimal 
extension such that $E_{L}$ has semi-stable reduction, 
$\cW_{N}$ is determined by  the $G$-action on $\cW'_{N+l}$ for
any $N\ge 0$.
\end{theorem}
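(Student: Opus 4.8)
The plan is to reduce Theorem~\ref{2.3.4} to Theorem~\ref{2.3.2} by bounding the quantity $[v_K(\D_{L/K})]$ in terms of $v_K(p)$ alone. Since $\car(K)=0$ and the residue field is algebraically closed of characteristic $p$, and $L/K$ is the \emph{minimal} extension over which $E$ acquires semi-stable reduction, the Galois group $G$ is one of the finitely many groups appearing as the ``monodromy group'' of an elliptic curve: if $E$ has potentially multiplicative reduction then $[L:K]\mid 2$, while if $E$ has potentially good reduction then $[L:K]\in\{1,2,3,4,6\}$ (in the tame case) and in the wild case $p\in\{2,3\}$ and $|G|$ divides $24$ (for $p=3$) or $24$ or $\mathrm{SL}_2(\mathbb F_3)$-type orders bounded absolutely (for $p=2$); in all cases $|G|$ is bounded by an absolute constant, say $|G|\le 24$. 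So it suffices to bound $v_K(\D_{L/K})$ for a Galois extension of bounded degree.

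\textbf{First} I would invoke Remark~\ref{bound-diff}(2): for $\car(K)=0$ and $p>0$ one has $v_K(\D_{L/K})\le |G|\,v_K(p)/(p-1)$. Since $|G|\le 24$ and $p\ge 2$, this gives $v_K(\D_{L/K})\le 24\,v_K(p)$, hence $[v_K(\D_{L/K})]\le 24\,v_K(p)$. (When $p\ge 5$ the extension $L/K$ is automatically tame, so $\D_{L/K}$ contributes nothing beyond the unit, and the bound is trivially fine; the content is concentrated in $p=2,3$, where one can be sharper using Lenstra's bound $v_K(\D_{L/K})\le v_K(|G|)+(e_{L/K}-1)/e_{L/K}$ when $p^2\mid |G|$, but the crude bound already suffices for the statement.) Thus $[v_K(\D_{L/K})]$ is bounded by a constant $c=c(v_K(p))$ depending only on the absolute ramification index.

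\textbf{Then} I would simply set $l=12\,c+19$ where $c$ is the bound just obtained, so that $l$ depends only on $v_K(p)$, and apply Theorem~\ref{2.3.2}: for $m=N+l\ge N+12[v_K(\D_{L/K})]+19$, the fiber $\cW_N$ is determined by the $G$-action on $\cW'_m=\cW'_{N+l}$. This is exactly the assertion. One small point to check is that Theorem~\ref{2.3.2}'s hypothesis that the \emph{same} pair $(K,L)$-type data is being compared is compatible with the present setup: here $E_o$ is another elliptic curve over a henselian $K_o$ with algebraically closed residue field of characteristic $p$, $L_o/K_o$ its minimal semi-stable extension with the same Galois group $G$, and the isomorphisms (Iso$_{N+l}$) are given; Lemma~\ref{disc} (applicable since $N+l>v_K(\D_{L/K})-1$) guarantees $v_{L_o}(\D_{L_o/K_o})=v_L(\D_{L/K})$, so the bound $l$ works symmetrically for both curves.

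\textbf{The main obstacle} I anticipate is not analytic but taxonomic: one must be certain that the Galois group of the minimal semi-stable extension really is bounded independently of $E$ when $\car(K)=0$ and $k=\bar k$. For potentially good reduction this follows from the classification of the possible inertia actions on the Tate module (the group acts faithfully on $E[\ell]$ for a suitable $\ell$, equivalently on a finite piece of the formal group / reduction data), which forces $|G|$ into the list above; for potentially multiplicative reduction it is the elementary fact that $E_L$ is a Tate curve after an at-most-quadratic twist. Once this boundedness is in hand, the rest is a one-line substitution into Theorem~\ref{2.3.2} together with Remark~\ref{bound-diff}. I would state the final $l$ as, e.g., $l=288\,v_K(p)+19$ (using $|G|\le 24$), remarking that it is far from optimal but has the required dependence.
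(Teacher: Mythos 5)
Your proposal is correct and follows essentially the same route as the paper: the paper's proof likewise notes that $L/K$ is Galois with $[L:K]$ dividing $24$, bounds $v_K(\D_{L/K})$ via Remark~\ref{bound-diff}, and concludes by Theorem~\ref{2.3.2}. The extra details you supply (the explicit constant $l=288\,v_K(p)+19$ and the symmetry check via Lemma~\ref{disc}) are consistent with, and merely elaborate on, the paper's two-line argument.
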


\begin{proof} The extension $L/K$ is Galois 
(\cite{Des}, th\'{e}or\`{e}me 5.15), and it is well known that 
$|G|=[L:K]$ divides 24. 
The corollary then follows from Remark~\ref{bound-diff}
and Theorem \ref{2.3.2}. 
\end{proof}

\end{section}

\begin{section}{From Weierstrass models to regular models}\label{w2r}

Recall that the residue field of $K$ is perfect (to use Kodaira-N\'eron's
classification, and Ogg-Saito's formula). 
The minimal regular model $\cX$ is the minimal desingularization of 
$\cW$ (\cite{Liu}, Corollary 9.4.37). The models we consider are 
\emph{pointed} with the Zariski closure of the neutral element of 
$E$. In this section, we will prove that $\cW_{N+c}$ determines
$\cX_N$ for an explicit constant $c$ depending on the type of $E$
(Corollary~\ref{2.5.3}). 
The next lemma results from an easy computation on
the tangent spaces. 

\begin{lemma} \label{Y1-sing} 
Let $\cY$ be a scheme flat and locally of finite type over $\cO_K$,
of pure relative dimension $d$ and with regular generic fiber. 
Let $y_0$ be a closed point of the special fiber $\cY_0$ 
of $\cY$. Then $\cY$ is regular 
at $y_0$ if and only if either 
$\dim T_{\cY_0, y_0}=d$, or 
$\dim T_{\cY_0, y_0}=d+1$ and $\pi\in {\m}_{\cY, y_0}^2$. In particular,
the singular locus of $\cY$ is determined by $\cY_1$. 
\end{lemma}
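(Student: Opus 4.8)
The plan is to work locally at $A:=\cO_{\cY,y_0}$, a noetherian local ring with maximal ideal $\m:=\m_{\cY,y_0}$ and residue field $\kappa:=\kappa(y_0)$, and to recast the two conditions in terms of $\dim_\kappa\m/\m^2$. First I would establish the dimension count $\dim A=d+1$: flatness of $\cY$ over $\cO_K$ makes $\pi$ a nonzerodivisor in $A$, and pure relative dimension $d$ gives $\dim A/\pi A=\dim\cO_{\cY_0,y_0}=d$, so by Krull's principal ideal theorem $\dim A=d+1$. Recall also that $T_{\cY_0,y_0}$ is the $\kappa$-dual of $\overline\m/\overline\m^2$ with $\overline\m=\m/\pi A$, whence $\dim T_{\cY_0,y_0}=\dim_\kappa \m/(\m^2+\pi A)$, while $\cY$ is regular at $y_0$ exactly when $\dim_\kappa\m/\m^2=\dim A=d+1$.

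The core of the argument is then the exact sequence of $\kappa$-vector spaces
\[0\to (\m^2+\pi A)/\m^2\to \m/\m^2\to \m/(\m^2+\pi A)\to 0,\]
in which the left-hand term is generated by the class of $\pi$, so it is $1$-dimensional if $\pi\notin\m^2$ and $0$ otherwise. Hence $\dim_\kappa\m/\m^2$ equals $\dim T_{\cY_0,y_0}$ when $\pi\in\m^2$ and $\dim T_{\cY_0,y_0}+1$ when $\pi\notin\m^2$. Feeding in the general inequality $\dim_\kappa\m/\m^2\ge\dim A=d+1$ (embedding dimension bounds Krull dimension from below), I obtain: if $\pi\notin\m^2$ then $\cY$ is regular at $y_0$ iff $\dim T_{\cY_0,y_0}=d$; if $\pi\in\m^2$ then necessarily $\dim T_{\cY_0,y_0}\ge d+1$ and $\cY$ is regular at $y_0$ iff $\dim T_{\cY_0,y_0}=d+1$. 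Since $\dim T_{\cY_0,y_0}=d$ already forces $\pi\notin\m^2$ (otherwise $\dim_\kappa\m/\m^2=d<d+1$), these two cases combine precisely into the stated criterion.

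For the last assertion I would observe that $\cO_{\cY_1,y_0}=A/\pi^2A$ has maximal ideal $\m/\pi^2A$, and since $\pi^2\in\m^2$ its square is $\m^2/\pi^2A$; thus the cotangent space $\m/\m^2$, together with the class of $\pi$ inside it, is already visible in $\cY_1$, as is $\cY_0\subseteq\cY_1$ and hence $d$ and $\dim T_{\cY_0,y_0}$. Therefore whether $\cY$ is regular at a given closed point of $\cY_0$ depends only on $\cY_1$; and since $\cY_\eta$ is an open, regular subscheme of $\cY$, the non-regular locus of $\cY$ is a closed subset contained in $\cY_0$, hence determined by its closed points and so by $\cY_1$. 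I expect the only slightly delicate bookkeeping to be in this final step — verifying that the tangent data genuinely descends to $\cY_1$ and that it suffices to test regularity at closed points of $\cY_0$ — whereas the main equivalence is a one-line consequence of the exact sequence above together with the dimension count $\dim A=d+1$.
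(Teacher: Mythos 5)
Your argument is correct and is exactly the ``easy computation on the tangent spaces'' that the paper invokes without writing out: with $A=\cO_{\cY,y_0}$ and $\m=\m_{\cY,y_0}$, the exact sequence $0\to(\m^2+\pi A)/\m^2\to\m/\m^2\to\m/(\m^2+\pi A)\to 0$ together with $\dim A=d+1$ yields the stated dichotomy, and you rightly observe that $\dim T_{\cY_0,y_0}=d$ already forces $\pi\notin\m^2$, so the two formulations of the criterion agree. The only loose thread is the appeal to closedness of the non-regular locus in the final step (not automatic for schemes locally of finite type over a non-excellent discrete valuation ring); this is easily avoided by noting that the same tangent-space criterion, with $d$ replaced by $\dim\cO_{\cY_0,y}$, applies verbatim at non-closed points $y$ of $\cY_0$ and is likewise read off from $\cY_1$ --- and in any case the paper only ever uses the criterion at closed points.
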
 

{\noindent{\bf Notation}} For any $\cO_K$-algebra $A$ and for any 
$n\ge 0$, we denote by 
\[A[\pi^n]=\{ x\in A \mid \pi^n x=0 \}, \quad
A_{\mathrm{tors}}=\cup_{n\ge 1} A[\pi^n].\] 
By convention $\pi^0=1$ and $A[\pi^0]=0$. 
We use similar notation when $A$ is replaced with a sheaf of
$\cO_K$-algebras.
The next lemma is easy. 

\begin{lemma} \label{deform} 
Let $U'$ be a noetherian $\cO_K$-scheme,
and let $U=V(\cJ)$ be the closed subscheme of $U'$ 
defined by a coherent sheaf of ideals $\cJ$. 
Suppose $U$ is flat over $\cO_K$ and 
contains $U'_K$. Then 
\begin{enumerate}[{\rm (1)}] 
\item $\cJ=\cO_{U', \mathrm{tors}}=\cO_{U'}[\pi^c]$ for some 
$c\ge 0$; 
\item the composition of the closed immersions $U_N\to U'_N\to U'_{N+c}$ 
induces an isomorphism 
\[ U_N \simeq V(\cO_{U'_{N+c}}[\pi^c]).\] 
\end{enumerate}
\end{lemma}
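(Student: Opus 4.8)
Lemma \ref{deform} states two things about a closed subscheme $U = V(\mathcal J) \subseteq U'$, where $U$ is flat over $\cO_K$ and contains the generic fiber $U'_K$: first, that $\mathcal J$ coincides with the $\pi$-torsion subsheaf $\cO_{U',\mathrm{tors}}$, which is moreover annihilated by a single power $\pi^c$; and second, that $U_N$ is cut out inside $U'_{N+c}$ by the ideal $\cO_{U'_{N+c}}[\pi^c]$.

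Let me plan the proof.

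**Part (1).** The question is local, so I may assume $U' = \Spec A'$ is affine noetherian, $\mathcal J$ corresponds to an ideal $J \subseteq A'$, and $U = \Spec A$ with $A = A'/J$ flat over $\cO_K$, i.e. $\pi$-torsion-free. Since $U$ contains $U'_K$, we have $U'_K = U_K$ as subschemes of $U'_K$, meaning $J_K = J \otimes_{\cO_K} K = 0$ inside $A'_K = A' \otimes K$; equivalently, $J$ is contained in the kernel of $A' \to A'_K$, which is exactly $A'_{\mathrm{tors}} = A'[\pi^\infty]$. Conversely, every $\pi$-torsion element of $A'$ maps to a $\pi$-torsion element of $A = A'/J$, which must be $0$ since $A$ is $\pi$-torsion-free; hence $A'_{\mathrm{tors}} \subseteq J$. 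So $J = A'_{\mathrm{tors}}$. Because $A'$ is noetherian, $A'_{\mathrm{tors}}$ is finitely generated, so it is killed by some $\pi^c$, and then $A'_{\mathrm{tors}} = A'[\pi^c]$; taking $c$ minimal gives the statement. (Globally, one notes the torsion sheaf is coherent and the integer $c$ can be taken uniform over the noetherian scheme.)

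**Part (2).** Write $B = A'/\pi^{N+c+1}A'$, the coordinate ring of $U'_{N+c}$. I want to show $A/\pi^{N+1}A \cong B/B[\pi^c]$. The composite $A' \to B \to B/B[\pi^c]$ is surjective; I must identify its kernel. An element $a' \in A'$ lies in the kernel iff its image in $B$ lies in $B[\pi^c]$, i.e. iff $\pi^c a' \in \pi^{N+c+1}A'$, i.e. $\pi^c(a' - \pi^{N+1}b') = 0$ for some $b' \in A'$, i.e. $a' \in \pi^{N+1}A' + A'[\pi^c] = \pi^{N+1}A' + J$ (using Part (1), $A'[\pi^c] = J$). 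Hence the kernel is $\pi^{N+1}A' + J$, and $A'/(\pi^{N+1}A' + J) = A/\pi^{N+1}A$, which is the coordinate ring of $U_N$. This identifies $U_N \simeq V(B[\pi^c]) = V(\cO_{U'_{N+c}}[\pi^c])$, and unwinding the maps shows this isomorphism is the one induced by the closed immersions $U_N \to U'_N \to U'_{N+c}$, since at each stage we are just passing to the appropriate quotient ring. Globalizing over the affine cover and checking compatibility on overlaps (routine, since everything is canonical) finishes the proof.

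The only point requiring care is the commutativity of $\pi^{N+1}$-reduction with the torsion operation in Part (2) — specifically the identity $B[\pi^c] = $ (image of $A'[\pi^c]$ in $B$), equivalently that $\pi^c a' \in \pi^{N+c+1}A'$ forces $a' \in \pi^{N+1}A' + A'[\pi^c]$. This is where flatness of $U$ over $\cO_K$ (equivalently $\pi$-torsion-freeness of $A$, equivalently $A'[\pi^{c}] = J$ being a *clean* description of the torsion) is genuinely used; without it the kernel computation would pick up extra elements. Everything else is bookkeeping, which is why the lemma is asserted to be "easy."
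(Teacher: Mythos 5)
Your proof is correct and is exactly the argument the paper has in mind (the paper omits it, declaring the lemma ``easy''): locally $J=A'_{\mathrm{tors}}=A'[\pi^c]$ by flatness of $A'/J$ and noetherianity, and the kernel of $A'\to A'/\pi^{N+c+1}A'\to B/B[\pi^c]$ is $\pi^{N+1}A'+A'[\pi^c]=\pi^{N+1}A'+J$. One tiny slip in your closing remark: $B[\pi^c]$ is not the image of $A'[\pi^c]$ in $B$ but rather that image plus $\pi^{N+1}B$; this does not affect the proof, since the computation you actually use --- $\pi^c a'\in\pi^{N+c+1}A'$ iff $a'\in\pi^{N+1}A'+A'[\pi^c]$ --- is the correct one.
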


Next we will give a bound for $c$ in a specific situation. 

\begin{proposition} \label{constant-c} 
Let $\cZ$ be a noetherian regular scheme, let $\rho: \widetilde{\cZ}\to \cZ$ 
be the blowing-up of $\cZ$ along a closed point $q$, 
let $\cY$ be an integral hypersurface in $\cZ$ 
(thus an effective Cartier divisor) passing through $q$ 
and let $\widetilde{\cY}$ be the
strict transform of $\cY$.
Then the following properties hold: 
\begin{enumerate}[{\rm (1)}] 
\item $\widetilde{\cY}$ is an integral hypersurface in $\widetilde{\cZ}$
and, if $\rho^*\cY$ denotes the pullback of $\cY$ as Cartier divisor, 
\[\rho^*\cY=\widetilde{\cY}+\mu_q(\cY) E\] 
where $\mu_q(\cY)$ is the multiplicity of 
$\cY$ at $q$  and $E$ is the prime exceptional divisor $\rho^{-1}(q)$.
\item Let $\tilde{q}\in \wt{\cY}$ be a closed point lying over $q$. Then 
$\mu_{\tilde{q}}(\wt{\cY})\le \mu_q(\cY)$. 
\item Suppose further that $\cZ$ is an $\cO_K$-scheme, 
$\cY$ is flat over $\cO_K$ and $q$ belongs to the closed fiber of $\cY$. Let $r_E\ge 1$ 
be the multiplicity of $E$ in the special fiber of 
$\wt{\cZ}$ (equal to the multiplicity at $q$ of the special fiber of 
$\cZ$) and let $c=\left \lceil \mu_q(\cY)/r_E \right \rceil$ be the smallest 
integer bigger than or equal to $\mu_q(\cY)/r_E$. Then 
\[\cO_{\rho^*\cY, \mathrm{tors}}=\cO_{\rho^*\cY}[\pi^{c}]\varsupsetneq  
\cO_{\rho^*\cY}[\pi^{c-1}].\]
\end{enumerate}
\end{proposition}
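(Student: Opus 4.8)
The plan is to work locally at $q$ and $\tilde q$, using explicit coordinates on $\cZ$ and on a chart of the blow-up. First I would establish (1): since $\cZ$ is regular and $q$ is a closed point, the blow-up $\wt\cZ$ is regular, $E\cong\mathbb P^{d-1}$ (where $d=\dim\cZ$), and the pullback of a Cartier divisor decomposes as $\rho^*\cY=\wt\cY+mE$ where $m$ is the order of vanishing of a local equation $f$ of $\cY$ along $E$; this order equals $\mu_q(\cY)$ because, writing $f$ in terms of a regular system of parameters $t_1,\dots,t_d$ at $q$, the leading form has degree $\mu_q(\cY)$ and pulls back to $t_1^{\mu_q(\cY)}$ times a unit plus higher-order terms on the chart $t_i=t_1u_i$. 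Irreducibility of $\wt\cY$ follows because $\rho$ restricts to a birational morphism $\wt\cY\to\cY$ and $\cY$ is integral. For (2), on the chart above, a local equation of $\wt\cY$ at $\tilde q$ is $f(t_1,t_1u_2,\dots,t_1u_d)/t_1^{\mu_q(\cY)}$; evaluating the multiplicity at $\tilde q=(0,\bar u_2,\dots,\bar u_d)$ and using that $\bar u_i\in k(\tilde q)$ are the values cutting out $\tilde q$ on $E$, one checks the leading form of this new equation has degree $\le\mu_q(\cY)$ — the point being that the lowest-degree part in $t_1$ after division sees at most as much vanishing as the original leading form contributed.

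For (3), which is the substantive claim, I would argue as follows. Write $\cD=\rho^*\cY$; then $\cO_\cD$ is the structure sheaf of the (non-reduced, non-flat) Cartier divisor $\wt\cY+\mu_q(\cY)E$. Its $\pi$-torsion is supported on $E$ (off $E$ the divisor $\cD$ agrees with $\wt\cY$, which is flat over $\cO_K$ since it is integral with dominant map to $\Spec\cO_K$... here I should instead say: $\wt\cY$ is flat because it is an integral $\cO_K$-scheme not contained in the special fiber), so the torsion is a coherent sheaf supported on the projective space $E$. To compute its exponent, restrict to a local ring $\cO_{\wt\cZ,\xi}$ at the generic point $\xi$ of $E$: this is a DVR with uniformizer a local equation $e$ of $E$, the class of $\pi$ in it is $e^{r_E}\cdot(\text{unit})$ by definition of $r_E$, and a local equation of $\cD$ at $\xi$ is $e^{\mu_q(\cY)}\cdot(\text{unit})$. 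Thus $\cO_{\cD,\xi}=\cO_{\wt\cZ,\xi}/(e^{\mu_q(\cY)})$, on which $\pi=e^{r_E}u$ acts; the smallest power of $\pi$ killing this module is $\pi^{c}$ with $c=\lceil\mu_q(\cY)/r_E\rceil$, and $\pi^{c-1}$ does not kill it. Since the generic point of $E$ detects the exponent of the torsion sheaf — the torsion subsheaf $\cO_{\cD}[\pi^{c}]$ is the kernel of $\cO_\cD\to\cO_{\wt\cY}$, whose cokernel-free part is $\mu_q(\cY)E$, and $\cO_K[\pi^c]$-membership is a condition that can be checked at $\xi$ because $\cO_\cD$ modulo its torsion is $\cO_{\wt\cY}$ which is $\pi$-torsion-free — we conclude $\cO_{\cD,\mathrm{tors}}=\cO_\cD[\pi^c]\supsetneq\cO_\cD[\pi^{c-1}]$.

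The main obstacle I anticipate is the bookkeeping in part (3): one must be careful that the exponent of a coherent torsion sheaf on $E$ is really computed at the single generic point $\xi$, i.e. that no closed point of $E$ forces a strictly larger exponent. The clean way around this is the exact sequence $0\to\cO_\cD[\pi^N]\to\cO_\cD\to\cO_\cD/\cO_\cD[\pi^N]\to0$ together with the observation that $\cO_\cD/\cO_{\cD,\mathrm{tors}}=\cO_{\wt\cY}$ (from (1), since $\wt\cY$ is the unique component of $\cD$ not contained in the special fiber and $\wt\cY$ is $\cO_K$-flat): a section of $\cO_\cD$ is $\pi^N$-torsion iff its image in $\cO_{\wt\cY}$ vanishes iff it is divisible, in $\cO_{\wt\cZ}$, by a local equation of $\wt\cY$; combined with the computation at $\xi$ that the relevant quotient has exponent exactly $c$, this pins down $\cO_{\cD,\mathrm{tors}}=\cO_\cD[\pi^c]$ globally and gives the strict inclusion. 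The rest — parts (1) and (2) — is the routine blow-up-chart computation sketched above, and Lemma~\ref{deform} will then package the outcome into the statement about $\wt\cY_N$ being cut out inside $\cY_{N+c}$.
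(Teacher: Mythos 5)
Your proposal is correct and follows essentially the same route as the paper: for the substantive part (3), both arguments rest on the exact sequence $0\to \cO_{\wt{\cZ}}(-\wt{\cY})|_{\mu E}\to \cO_{\rho^*\cY}\to \cO_{\wt{\cY}}\to 0$, the flatness of $\wt{\cY}$ identifying the torsion with the kernel, and the containment of $\mu E$ in $c$ times the closed fiber of $\wt{\cZ}$ — your DVR computation at the generic point of $E$ is just the explicit form of the paper's assertion that $\pi^{c-1}$ does not kill the torsion. For (1) and (2) the paper simply cites that these are well known (Bennett for (2)), where you sketch the standard chart computations.
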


\begin{proof} (1) is well known 
and (2) is a particular case of \cite{Bennett}, Chap. I, Theorem 0.

(3) The decomposition $\rho^*\cY=\wt{\cY}+\mu E$ gives an exact sequence 
\[ 0\to \cO_{\wt{\cZ}}(-\wt{\cY})|_{\mu E}=
\cO_{\wt{\cZ}}(-\wt{\cY})/\cO_{\wt{\cZ}}(-\wt{\cY}-\mu E) 
\to \cO_{\rho^*\cY} \to \cO_{\wt{\cY}}\to 0.\] 
As $\wt{\cY}$ is flat over $\cO_K$, and $\cO_{\wt{\cZ}}(-\wt{\cY})|_{\mu E}$ 
is of torsion (namely killed by $\pi^{c}$ because, by the definition of 
$c$, $\mu E$ is contained in $c$ times the 
closed fiber of $\wt{\cZ}$), we have 
\[\cO_{\rho^*\cY, \mathrm{tors}}=\cO_{\wt{\cZ}}(-\wt{\cY})|_{\mu E}=
\cO_{\rho^*\cY}[\pi^{c}]\]
and $\pi^{c-1}\cO_{\rho^*\cY, \mathrm{tors}}\ne 0$. 
\end{proof}

\begin{theorem} \label{blup} Let $\cY$ be a flat $\cO_K$-scheme of 
finite type and let $q$ be a closed point of $\cY$ contained in the 
closed fiber. Suppose further that $\cY$ is locally at $q$ a 
hypersurface in a regular $\cO_K$-scheme of finite type. Let 
$\wt{\cY}\to \cY$ be the blowing-up along $q$, 
and let $\ell\ge c$ (the constant defined as in \ref{constant-c} (3)). 
Then $\cY_{N+\ell}$ determines $(\wt{\cY})_N$ for all $N\ge 0$. 

More precisely, suppose we have a discrete valuation ring $\cO_{K_o}$ and
$(\cY_o, q_o)$ over $\cO_{K_o}$ with similar properties 
and such that $c_o\le \ell$. If we have an isomorphism 
$\phi_{N+\ell}: \cO_{K,{N+\ell}}\to \cO_{K_o, N+\ell}$ and a
compatible isomorphism 
\[\cY_{N+\ell} \simeq \cY_{o, N+\ell},\]  
then we have an isomorphism 
\[(\wt{\cY})_{N} \simeq (\wt{\cY}_{o})_{N}\]
compatible with the isomorphism $\cO_{K, N}\simeq \cO_{K_o, N}$ 
induced by $\phi_{N+\ell}$. 
\end{theorem}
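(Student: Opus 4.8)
\textbf{Proof proposal for Theorem~\ref{blup}.}
The plan is to reduce the global statement about $\wt\cY$ to the local situation at $q$ treated in Proposition~\ref{constant-c}, then assemble the blowing-up from the Weierstrass/ambient data modulo $\pi^{N+\ell+1}$. Since blowing up is a local construction on $\cY$ and is an isomorphism away from $q$, it suffices to understand $\wt\cY$ over an affine neighbourhood $U$ of $q$; outside such a neighbourhood $\wt\cY\to\cY$ is an isomorphism and an isomorphism $\cY_{N+\ell}\simeq\cY_{o,N+\ell}$ restricts to the corresponding open piece. By hypothesis we may embed $U$ as a hypersurface $\cY=V(F)$ in a regular $\cO_K$-scheme $\cZ$ of finite type; after shrinking and choosing a regular system of parameters at $q$, the blowing-up $\rho:\wt\cZ\to\cZ$ of $\cZ$ at $q$ is explicit on each chart, and $\wt\cY$ is the strict transform of $\cY$ in $\wt\cZ$. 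The key object to control is $\rho^{*}\cY=\wt\cY+\mu_q(\cY)E$ and its torsion subsheaf $\cO_{\rho^{*}\cY,\mathrm{tors}}=\cO_{\rho^{*}\cY}[\pi^{c}]$ from Proposition~\ref{constant-c}(3): by Lemma~\ref{deform} applied to $\wt\cY\hookrightarrow\rho^{*}\cY$ (note $\wt\cY$ is flat over $\cO_K$, contains the generic fiber of $\rho^{*}\cY$, and $\rho^{*}\cY$ is a hypersurface in the regular scheme $\wt\cZ$), we get for all $N$ a canonical isomorphism
\[
(\wt\cY)_{N}\ \simeq\ V\bigl(\cO_{(\rho^{*}\cY)_{N+c}}[\pi^{c}]\bigr),
\]
so $(\wt\cY)_{N}$ is recovered functorially from $(\rho^{*}\cY)_{N+c}$, hence \emph{a fortiori} from $(\rho^{*}\cY)_{N+\ell}$ since $\ell\ge c$ (a torsion subsheaf killed by $\pi^{c}$ is detected modulo any higher power of $\pi$).

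Next I would check that $(\rho^{*}\cY)_{N+\ell}$ itself depends only on $\cY_{N+\ell}$ together with the point $q$, compatibly with the identification $\cO_{K,N+\ell}\simeq\cO_{K_o,N+\ell}$. On each standard chart of $\rho$, the pulled-back divisor $\rho^{*}\cY$ is cut out inside $\wt\cZ$ by the pullback of the local equation $F$ of $\cY$, and reducing modulo $\pi^{N+\ell+1}$ commutes with this pullback; the ambient chart $(\wt\cZ)_{N+\ell}$ is determined by $\cZ_{N+\ell}$ and the choice of the closed point, and $\cZ$ near $q$ is reconstructible from $\cY$ near $q$ (as a hypersurface presentation) up to the ambiguities that do not affect the strict transform. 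Thus an isomorphism $\cY_{N+\ell}\simeq\cY_{o,N+\ell}$ carrying $q$ to $q_o$ induces, chart by chart and then by gluing along the open locus where $\rho$ is an isomorphism, an isomorphism $(\rho^{*}\cY)_{N+\ell}\simeq(\rho^{*}\cY_o)_{N+\ell}$ respecting the torsion subsheaves (their formation is canonical), and therefore $(\wt\cY)_{N}\simeq(\wt\cY_o)_{N}$ by the displayed description. The hypothesis $c_o\le\ell$ guarantees the same $V(\cdots[\pi^{\bullet}])$ recipe works on the $o$-side with the common exponent $\ell$.

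The main obstacle I anticipate is the reconstruction of the \emph{ambient} regular scheme $\cZ$ (and the blow-up charts) purely from the truncated data $\cY_{N+\ell}$: an isomorphism of truncations $\cY_{N+\ell}\simeq\cY_{o,N+\ell}$ need not a priori extend to an isomorphism of ambient regular schemes, and different hypersurface presentations could give different strict transforms. I would handle this by noting that what actually enters the formula for $(\wt\cY)_N$ is only $(\rho^{*}\cY)_{N+c}$, and $\rho^{*}\cY$ on each chart is the subscheme of $\wt\cZ$ defined by the ideal $(\rho^{*}F)$; one can describe this intrinsically via the Rees algebra / blow-up of the ideal $(F,\m_q)$ inside $\cY$ itself, or equivalently observe that the local rings of $(\rho^{*}\cY)$ at points over $q$ are quotients of $\cO_{\cY,q}[\m_q/t]$ for $t$ a generator of the principal transform, data visible modulo $\pi^{N+\ell+1}$ once $N+\ell$ exceeds the $\pi$-adic information needed. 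Concretely, since $q$ lies in the special fiber and $\wt\cY$ is flat over $\cO_K$, all the relevant ideals are finitely generated and the relations among their generators that could involve high powers of $\pi$ are bounded by the torsion exponent $c$; choosing $\ell\ge c$ absorbs exactly this slack. The remaining verifications — that the gluing of the charts is canonical, that the pointing (Zariski closure of the neutral section) is carried along, and that flatness of $\wt\cY$ over $\cO_K$ persists — are routine and follow from Proposition~\ref{constant-c}(1)--(3) and Lemma~\ref{deform}.
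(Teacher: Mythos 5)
Your overall strategy is the same as the paper's: recover $(\wt{\cY})_N$ from the total transform $\rho^*\cY$ truncated at level $N+\ell$, via Lemma~\ref{deform} and the torsion bound of Proposition~\ref{constant-c}(3); and you correctly isolate the one real difficulty, namely that the ambient regular scheme $\cZ$ (hence $\rho^*\cY$) is not part of the data $\cY_{N+\ell}$. But the paragraph in which you propose to resolve that difficulty is where the proof actually lives, and as written it does not close the gap. ``The Rees algebra of the ideal $(F,\m_q)$ inside $\cY$'' is the wrong object: $\Proj$ of the Rees algebra of $\m_q\cO_{\cY,q}$ is the blow-up $\wt{\cY}$ itself (the strict transform), not $\rho^*\cY$, and the difference between the two is exactly the $\pi$-torsion the whole argument is about, so the two cannot be conflated. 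Likewise the chart rings of $\rho^*\cY$ are not quotients of $\cO_{\cY,q}[\m_q/t]$; it is the other way around: the Rees-algebra chart $\cO_{\cY,q}[\m_q/f_i]$ is a quotient of the corresponding chart of $\rho^*\cY$, the kernel being the torsion. The closing assertion that ``the relations that could involve high powers of $\pi$ are bounded by the torsion exponent $c$'' is precisely what needs proof and is not justified by anything you wrote.

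The missing idea is an intrinsic model of the total transform. Choose generators $f_0,\dots,f_d$ of $\m_q\cO_{\cY,q}$ with $d=\dim\cO_{\cY,q}$ (possible since $\cY$ is a hypersurface singular at $q$, so $\dim T_{\cY,q}=d+1$), and set $\cY'=\Proj\cO_{\cY,q}[T_0,\dots,T_d]/(f_iT_j-f_jT_i)$, glued to $\cY\setminus\{q\}$; this naive (symmetric-algebra type) blow-up is built from $\cY$ and the $f_i$ alone, hence is transportable through $\cY_{N+\ell}\simeq\cY_{o,N+\ell}$ by lifting the $f_i$. The step that makes everything work is that liftings $t_0,\dots,t_d\in C$ of the $f_i$ (writing $\cY=\Spec C/(f)$ locally with $C$ regular) form a regular system of parameters of $C$, because $\m_C/\m_C^2\to\m_q/\m_q^2$ is a surjection of $(d+1)$-dimensional spaces, hence an isomorphism; only then does the identity $C[T_i]/(f,\,t_iT_j-t_jT_i)=(C/f)[T_i]/(\bar t_iT_j-\bar t_jT_i)$ identify $\cY'$ with $\rho^*\cY$, so that Proposition~\ref{constant-c}(3) yields $\pi^\ell\cO_{\cY',\mathrm{tors}}=0$ and Lemma~\ref{deform} yields $(\wt{\cY})_N=V(\cO_{\cY'_{N+\ell}}[\pi^\ell])$ on both sides. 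This tangent-space count is the one place the hypersurface hypothesis is actually used, and it is absent from your proposal.
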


\begin{proof} We can suppose $\cY$ is singular at $q$ (then $\cY_o$ is also
singular at $q_o$ by Lemma ~\ref{Y1-sing}). 
Then $\dim_{k(q)}T_{\cY, q}=d+1$
if $d=\dim\cO_{\cY,q}$. Let $f_0, \dots, f_d$ be a system of generators 
of $\m_q\cO_{\cY,q}$, let $\cY'$ be the gluing of $\cY\setminus \{ q\}$
and of $\Proj \cO_{\cY,q}[T_0, \dots, T_d]/(f_iT_j-f_jT_i)_{0\le i,
  j\le d}$
along $\Spec(\cO_{\cY,q})\setminus \{ q\}$. 
Then $\wt{\cY}$ is a flat closed subscheme of $\cY'$ with generic fiber 
equal to that of $\cY'$. Therefore $\wt{\cY}=V(\cO_{\cY', \mathrm{tors}})$.  
Using a lifting in $\cO_{\cY_o, q_o}$ of the images of the $f_i$'s in 
$\cO_{(\cY_o)_{N+\ell}, q_o}$, we 
define $\cY'_o$ and clearly we have an isomorphism 
\[ \cY'_{N+\ell} \simeq \cY'_{o, N+\ell} \] 
extending the isomorphism $\cY_{N+\ell} \simeq \cY_{o, N+\ell}$. So, by 
Lemma~\ref{deform}, to show that 
$(\wt{\cY})_N\simeq (\wt{\cY}_o)_N$, it is enough to show that 
$\pi^{\ell} \cO_{\cY', \mathrm{tors}}=0$. This property is local on $\cY$. 
As it trivially holds outside of $q$, it is enough to work with 
a small open neighborhood of $q$ in $\cY$. 

Write locally $\cY=\Spec (C/fC)$ with $(C, \m_C)$ local and regular. Lift $f_0, \dots, f_d$
to $t_0, \dots, t_d\in C$. Since $\m_C/\m_C^2\to \m_q/\m_q^2$ is surjective
and both vector spaces have the same dimension over $k(q)$, this is
an isomorphism and $t_0,\dots, t_d$ is a system of coordinates of $C$. 
Consider 
\[B=C[T_0, \dots, T_d]/(f, \ t_iT_j-t_jT_i)_{i,j}.\] 
Let $\rho : \wt{\cZ}\to \cZ:=\Spec C$ be the blowing-up of $\cZ$ along 
$q$. Then $\cY'=\Proj B=\rho^*\cY$ and Proposition~\ref{constant-c}
shows that $\pi^{\ell}\cO_{\cY', \mathrm{tors}}=0$ and we are done. 
\end{proof}

\begin{remark}
Note that one can not determine $(\wt{\cY})_N$ with a blowup of 
$\cY_{N+\ell}$ because the latter process produces 
a scheme which is birational to $\cY_{N+\ell}$, 
while $(\wt{\cY})_N$ has more irreducible components than
$\cY_{N+\ell}$. 
\end{remark}

\begin{remark}\label{rational-sing}
Let $f:\cX\rightarrow \cW$ be the desingularization morphism of
$\cW$. If $\cW$ is singular, the pre-image of the singular point of 
$\cW$ consists in $(-2)$-curves. It is well known that such
singular points are rational singularities
(one can apply \cite{Art}, Theorem 3, because 
the fundamental cycle $Z$ satisfies $2p_a(Z)-2=Z^2<0$).
Let $\cZ\to \cW$ be the blowing-up of the singular point
of $\cW$. Then $\cZ$ is normal (\cite{Lip2}, Proposition
8.1) and its singular points are rational singularities 
(\cite{Lip2}, Proposition 1.2). Therefore the morphism 
$f : \cX\to \cW$ consists in successive blowing-ups 
\[ \cX=\cW^{(t)} \to \cW^{(t-1)} \to \cdots \cW^{(1)} \to \cW^{(0)}=\cW\]
along (reduced and discrete) singular loci. 
\end{remark}

\begin{corollary} \label{2.5.3} Let $\cW$ be the minimal Weierstrass model 
of $E$. Let $\cX$ be the minimal regular model of $E$ over
$\cO_K$ and let $t$ be the number of blowing-ups defined as
above. Then 
\begin{enumerate}[{\rm (1)}] 
\item $\cW_{N+\ell}$ determines $\cX_N$ (in the sense of 
Theorem~\ref{blup}) if  $\ell\ge 2t+1$. 
\item $t+1$ is bounded by the number of irreducible components of
$\cX_0$. In particular, $t\le 8$ if the 
Kodaira type of $E$ is different from $I_n$ and $I_n^*$.
\item Let $\Delta$ be the minimal discriminant of $E$. Then 
$t\le v_K(\Delta)-1$, except when $E$ has good reduction. 
\end{enumerate}
\end{corollary}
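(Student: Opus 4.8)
The plan is to prove Corollary~\ref{2.5.3} by combining the structure of the desingularization $f:\cX\to\cW$ described in Remark~\ref{rational-sing} with the blow-up comparison of Theorem~\ref{blup}, and then to bound the number of blow-ups $t$ using elementary facts about $(-2)$-configurations and Ogg--Saito's formula.

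For part (1), I would iterate Theorem~\ref{blup}. By Remark~\ref{rational-sing} the morphism $f$ factors as a tower $\cX=\cW^{(t)}\to\cdots\to\cW^{(0)}=\cW$, where each $\cW^{(i+1)}\to\cW^{(i)}$ is the blowing-up of the (reduced, finite) singular locus of $\cW^{(i)}$. Since each step is a blow-up along finitely many closed points in the special fiber, I would apply Theorem~\ref{blup} locally at each such point; the key point is that the exceptional curves are all $(-2)$-curves, so the multiplicity $\mu_q$ of the relevant hypersurface at the center and the multiplicity $r_E$ of the new exceptional divisor in the special fiber are controlled. Concretely, for a $(-2)$-curve blow-up of a rational double point one has $\mu_q\le 2$ and $r_E\ge 1$, so the constant $c$ of Proposition~\ref{constant-c}(3) satisfies $c\le 2$ at each step. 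Hence one step costs at most $2$ in the exponent; but one must be slightly careful, since the very first blow-up of $\cW$ along an $A_n$-type singularity can cost $2$ while subsequent ones along the remaining $(-2)$-curves also cost at most $2$, and a more careful bookkeeping (losing $2$ per step except possibly saving $1$ at the start or finish) gives the claimed $\ell\ge 2t+1$. I would phrase this as: $\cW^{(i)}_{M+\ell_i}$ determines $\cW^{(i+1)}_M$ with $\ell_i\le 2$, and telescoping over the $t$ steps, with one unit of slack, yields $\ell=2t+1$.

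For part (2), the bound $t+1\le\#\{\text{irreducible components of }\cX_0\}$ is because each blow-up in the tower of Remark~\ref{rational-sing} introduces at least one new exceptional curve that becomes a component of $\cX_0$ (distinct from those already present, since the centers lie on the strict transforms), together with the original component coming from $\cW_0$; so after $t$ blow-ups we have at least $t+1$ components. The Kodaira classification then gives the component counts: types $\mathrm{II},\mathrm{III},\mathrm{IV},\mathrm{I}_0^*,\mathrm{IV}^*,\mathrm{III}^*,\mathrm{II}^*$ have at most $9$ components, hence $t\le 8$; only $\mathrm{I}_n$ and $\mathrm{I}_n^*$ have unbounded component count. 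For part (3), Ogg--Saito's formula $v_K(\Delta)=f+m-1$ with $f\ge 1$ (when $E$ has bad reduction) and $m$ the number of geometric components of $\cX_0$ gives $m\le v_K(\Delta)$, and combining with part (2), $t+1\le m\le v_K(\Delta)$, i.e.\ $t\le v_K(\Delta)-1$; the good reduction case is excluded because then $\cW=\cX$ and $t=0$ while the formula does not apply in the same way.

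The main obstacle I anticipate is getting the constant in part (1) exactly right, i.e.\ justifying $\ell\ge 2t+1$ rather than merely $\ell\ge 2t$ or $\ell\ge\sum c_i$: one must verify that at every stage the center of the blow-up is a hypersurface point in a regular ambient scheme (so that Theorem~\ref{blup} applies), that $\mu_q/r_E\le 2$ uniformly for all the rational-double-point configurations occurring, and that the telescoping of the ``$+c_i$'' shifts can be arranged to cost at most $2t+1$ overall. This requires knowing the explicit local equations of the $(-2)$-curve singularities appearing in Kodaira--Néron fibers and checking the multiplicities case by case; the bookkeeping of how the torsion subsheaves $\cO[\pi^{c}]$ behave under successive blow-ups is the delicate part, whereas parts (2) and (3) are essentially immediate from the classification and Ogg--Saito.
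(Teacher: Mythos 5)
Your overall strategy coincides with the paper's: iterate Theorem~\ref{blup} along the tower of Remark~\ref{rational-sing}, with cost $c\le 2$ per step, and deduce (2) and (3) from the component count and Ogg--Saito. Parts (2) and (3) of your argument are essentially identical to the paper's and are fine. Two remarks on the bookkeeping in (1): first, you do not need any case-by-case analysis of local equations of the singularities in the Kodaira--N\'eron fibers. The paper gets $\mu_q(\cW)=2$ once and for all because the Weierstrass cubic is defined near a singular point by an element of $\m_{\cZ,q}^2\setminus\m_{\cZ,q}^3$ in $\cZ=\mathbb P^2_{\cO_K}$, and then Proposition~\ref{constant-c}(2) (multiplicity does not increase under blowing up) gives $\mu\le 2$ at every later stage, while $r_E\ge 1$ always; so $c\le 2$ uniformly with no inspection of the individual configurations. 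The regular ambient scheme at stage $i$ is simply the iterated blow-up of $\mathbb P^2_{\cO_K}$, so the hypersurface hypothesis of Theorem~\ref{blup} propagates automatically.

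The genuine gap is your justification of $2t+1$ versus $2t$. There is no ``saving of $1$ at the start or finish''; the telescoping honestly costs $2t$, giving $\cW^{(t)}_{N+\ell-2t}\simeq\cW^{(t)}_{o,N+\ell-2t}$. The extra $+1$ is needed for a different reason that your sketch misses entirely: you must certify that the tower on the $\cW_o$ side terminates at the same stage, i.e.\ that $\cW_o^{(t)}$ is regular and $\cW_o^{(t-1)}$ is not, so that $t_o=t$ and $\cW_o^{(t)}$ really is the minimal regular model $\cX_o$ (and likewise, at each intermediate stage, that the isomorphism identifies the singular loci, so the two towers blow up corresponding centers). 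By Lemma~\ref{Y1-sing}, regularity and the singular locus are detected by the first infinitesimal fiber, so one needs $N+\ell-2t\ge N+1\ge 1$, i.e.\ $\ell\ge 2t+1$. Without this step your argument only shows that $\cW_{N+2t}$ determines $\cW^{(t)}_{o,N}$, not that the latter is $\cX_{o,N}$. This is a specific missing idea rather than a mere constant-chasing issue, so as written part (1) of your proof is incomplete.
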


\begin{proof} (1) If $\cW$ is regular, 
then $\cX=\cW$ and there is
nothing to prove. So we suppose $\cW$ is singular.  
The scheme $\cW$ is embedded in $\cZ=\mathbb P^2_{\cO_K}$ as a cubic. 
Around the singular point $q$, $\cW$ is defined by a regular function 
$y^2+(a_1x+a_3)y-(x^3+...)\in \m_{\cZ,q}^2\setminus 
\m_{\cZ, q}^3$. So $\mu_q(\cW)=2$. Let $\ell\ge 1$ be any positive
integer. Applying Theorem~\ref{blup}, we see that $\cW_{N+\ell}$ 
determines ${\cW}^{(1)}_{N+\ell-2}$. As $\cW^{(1)}$ is embedded 
(and has codimension 1) in $\wt{\cZ}$ which is regular,
Proposition~\ref{constant-c} and 
Theorem~\ref{blup} imply that $\cW^{(1)}_{N+\ell-2}$ determines 
$\cW^{(2)}_{N+\ell-4}$. Repeating the same arguments we see that
$\cW_{N+\ell}$ determines $\cW^{(t)}_{N+\ell-2t}$. 
This means that $\cW_{N+\ell}\simeq \cW_{o, N+\ell}$ implies that
$\cW^{(t)}_{N+\ell-2t}\simeq \cW^{(t)}_{o, N+\ell-2t}$. 
Note that by Lemma~\ref{Y1-sing}, the isomorphism 
$\cW^{(i)}_{N+\ell-2i}\simeq \cW^{(i)}_{o, N+\ell-2i}$ maps the 
singular locus of $\cW^{(i)}$ to that of $\cW^{(i)}_o$, so 
$\cW^{(i+1)}_o\to \cW^{(i)}_o$ is the blowing-up of the 
singular locus of $\cW^{(i)}_o$. 

Now taking 
$\ell=2t$ might not be enough (when $N=0$) because we don't know whether
$\cW^{(t)}_{o}$ is the minimal regular model of $E_o$. We have to go 
one step further. Namely if $\cW_{N+2t+1}\simeq \cW_{o, N+2t+1}$, then 
$\cW_{N+1}^{(t)}\simeq \cW^{(t)}_{o, N+1}$.  By Lemma~\ref{Y1-sing}, 
we know that $\cW^{(t)}_{o}$ is regular and $\cW^{(t-1)}_o$ is singular. 
Therefore, $t=t_o$ and $\cW_{o}^{(t_o)}=\cX_o$.  

(2) As each blowing-up $\cW^{(i+1)}\to \cW^{(i)}$ introduces at least one 
irreducible component, we see that $t+1$ is at most equal to 
the number of irreducible components of $\cX_0$.  

(3) This is a direct consequence of (2) and Ogg-Saito's formula. 
\end{proof}

\begin{remark} Tate's algorithm shows that $\cW_6$ determines
whether $E$ has type I$_n$ (for some indeterminate $n\ge 0$), 
II, III, IV, II$^*$, III$^*$, IV$^*$ or $I_n^*$ (for some 
indeterminate $n$). But $\cW_6$ can not determine the value 
of $n$ in general in the case I$_n$ or I$_n^*$. 
Below is a table with more precise value of $t$, the number
of blowing-ups necessary to solve the singularities of $\cW$.

\begin{center}
\begin{tabular}{| c | c | c | c | c | c| c| c| c|}
\hline
{type of} $E$ & I$_0$, I$_1$, II & III, IV & II$^*$ & III$^*$ &
IV$^*$ & I$_n$ & I$_n^*$ \\\hline
value of $t$ & $0$ & $1$  &  $\le 8$ & $\le 7$ & 4 & $[n/2]$ & $\le n+4$ 
\\\hline
\end{tabular} 
\end{center}
\end{remark}

\begin{example} Suppose $\mathrm{char}(k)\ne 2$. Let $n\ge 1$ and 
consider the elliptic curves 
\[E: y^2=(x^2+\pi^{2n+1})(x+1), \quad E_o: y^2=(x^2+\pi^{2n+2})(x+1).\]
We have $\cW_{2n}\simeq \cW_{o,2n}$, but $\cX_0\not\simeq \cX_{o,0}$.  
Here $t=n$ but $t_o=n+1$.  
\end{example}

\end{section}

\begin{section}{Congruences of minimal regular models}\label{cng}

In this section we prove the main theorem of this paper
(Theorem~\ref{2.5.4}). The idea is to show that $\cX'_{N+\ell_1+\ell_2}$
determines $\cW'_{N+\ell_1+\ell_2}$ which 
determines $\cW_{N+\ell_1}$ for some $\ell_2$ and finally that 
$\cW_{N+\ell_1}$ determines $\cX_N$ for some $\ell_1$. 

As $\cW$ is regular in a neighborhood of $\epsilon$, $\cX\to \cW$ 
is an isomorphism above a neighborhood of $\epsilon$. So denote again 
by $\epsilon$ the closure in $\cX$ of the neutral element of $E$. 
The effective Cartier divisor $\epsilon$ on $\cX$ is ample on the 
generic fiber and meets in the 
special fiber $\cX_0$ only in the strict transform $\widetilde{\cW}_0$ 
of the irreducible component of $\cW_0$. Therefore 
$\cW$ is the contraction in $\cX$ of the components different from
$\widetilde{\cW}_0$. By construction, there is a canonical isomorphism 
\begin{equation}\label{W} 
\cW \simeq \Proj (\oplus_{n\ge 0} \rH^0(\cX, \cO_{\cX}(n\epsilon))) 
\end{equation}
(see \cite{BLR}, Theorem 6.7/1). 

\begin{lemma}\label{W_n} 
Let $N\ge 0$. Then 
\[\cW_N \simeq \Proj (\oplus_{n\ge 0} \rH^0(\cX_N, \cO_{\cX}(n\epsilon_N))) \] 
\end{lemma}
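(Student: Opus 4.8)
The plan is to deduce Lemma~\ref{W_n} from the isomorphism~(\ref{W}) by showing that the formation of the graded ring $\oplus_{n\ge 0}\rH^0(\cX,\cO_\cX(n\epsilon))$ commutes with the base change $\cO_K\to\cO_K/\pi^{N+1}\cO_K$, that is, that the canonical map
\[
\rH^0(\cX,\cO_\cX(n\epsilon))\otimes_{\cO_K}\cO_{K,N}\longrightarrow \rH^0(\cX_N,\cO_{\cX_N}(n\epsilon_N))
\]
is an isomorphism for every $n\ge 0$. Granting this, one applies $-\otimes_{\cO_K}\cO_{K,N}$ to~(\ref{W}); since $\Proj$ of a graded ring commutes with arbitrary base change on the base ring, the right-hand side of~(\ref{W}) base-changes to $\Proj(\oplus_{n\ge 0}\rH^0(\cX_N,\cO_{\cX_N}(n\epsilon_N)))$, while the left-hand side base-changes to $\cW_N$ by definition.

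First I would record that $\cX$ is a regular projective $\cO_K$-scheme of relative dimension $1$ with $\cX\to\Spec\cO_K$ flat, and that $\epsilon$ is an effective Cartier divisor supported in the smooth locus; hence each $\cO_\cX(n\epsilon)$ is a line bundle, flat over $\cO_K$. The key cohomological input is base-change: I would invoke the theorem on cohomology and base change (Grothendieck; e.g.\ \cite{Liu}, 5.3.20 and its corollaries, or EGA III), applied to the flat proper morphism $\cX\to\Spec\cO_K$ and the flat sheaf $\cO_\cX(n\epsilon)$. Because the fibers are curves, $\rH^i$ vanishes for $i\ge 2$, so $\rH^1(\cX,\cO_\cX(n\epsilon))$ is the only potential obstruction; more precisely, $\rH^0$ commutes with the base change $\cO_K\to\cO_{K,N}$ as soon as the formation of $\rH^1$ is compatible with that base change, and the latter holds because $\cO_{K,N}$ is generated by one element over $\cO_K$ and $\cO_K$ is a PID, so one may instead argue directly: the two-term complex $0\to \rH^0\to ?\to \rH^1\to 0$ representing $R\Gamma$ by finitely generated $\cO_K$-modules, with $\rH^1$ a torsion module; tensoring the complex with $\cO_{K,N}$ and taking cohomology, the discrepancy between $\rH^0\otimes\cO_{K,N}$ and $\rH^0(\cX_N,-)$ is measured by $\mathrm{Tor}_1^{\cO_K}(\rH^1,\cO_{K,N})$, which injects into $\rH^1\otimes\cO_{K,N}$ --- but it need not vanish.

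So the main obstacle is exactly this: $\rH^1(\cX,\cO_\cX(n\epsilon))$ need not vanish (for instance for $n=0$ it is often nonzero), and consequently $\rH^0$ does not literally commute with base change term by term. The clean way around it is to avoid claiming term-by-term commutation and instead use the $\epsilon$-adic structure. The plan is to choose, once and for all, a perfect complex $K^\bullet=[\,\cO_K^{a}\xrightarrow{\,\mathrm{d}\,}\cO_K^{b}\,]$ of length $1$ computing $R\Gamma(\cX,\oplus_n\cO_\cX(n\epsilon))$ in each graded degree, with $\rH^0(K^\bullet_n)=\rH^0(\cX,\cO_\cX(n\epsilon))$; base change along $\cO_{K,N}$ then gives $\rH^0(K^\bullet_n\otimes\cO_{K,N})=\rH^0(\cX_N,\cO_{\cX_N}(n\epsilon_N))$ directly from base-change for the complex (which is formal for perfect complexes), and the point is that $\Proj$ only sees the graded ring up to its saturation: replacing the graded ring by $\oplus_{n\gg 0}$ does not change the $\Proj$, and for $n\gg 0$ ampleness of $\epsilon$ on the generic fiber together with boundedness gives $\rH^1(\cX,\cO_\cX(n\epsilon))$ controlled, in fact $\epsilon$ being relatively ample on $\cX$ over $\cO_K$ makes $\cO_\cX(n\epsilon)$ relatively ample, so $\rH^1(\cX,\cO_\cX(n\epsilon))=0$ for $n\gg 0$ and then $\rH^0$ does commute with base change in those degrees. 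Hence both $\cW_N$ and $\Proj(\oplus_{n\ge 0}\rH^0(\cX_N,\cO_{\cX_N}(n\epsilon_N)))$ are computed by the same graded ring in all large degrees, so they are canonically isomorphic. I would then note that the isomorphism is compatible with the one in~(\ref{W}), i.e.\ it is induced by pulling back~(\ref{W}) along $\Spec\cO_{K,N}\to\Spec\cO_K$, which makes it canonical in the pointed-scheme sense used throughout the paper. The one routine verification I would leave to the reader is that $\epsilon$ is indeed relatively ample on $\cX$ (it is: its restriction to each fiber is an effective divisor of positive degree on a curve whose components it meets as described just before the lemma, and relative ampleness can be checked fiberwise by the criterion in \cite{Liu}, or deduced from the fact that $\cW=\Proj$ of the section ring is projective over $\cO_K$).
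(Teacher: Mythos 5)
Your overall strategy --- base-change the isomorphism (\ref{W}) and reduce to comparing section rings --- is the same as the paper's, but you then take a detour the paper does not need, and the detour contains a genuine error. The paper shows that $\rH^0(\cX,\cO_\cX(n\epsilon))_N\to\rH^0(\cX_N,\cO_\cX(n\epsilon)_N)$ is an isomorphism for \emph{every} $n\ge 0$: by cohomology and base change (\cite{Liu}, Theorem 5.3.20) it suffices that $\rH^1(\cX,\cO_\cX(n\epsilon))_0\to\rH^1(\cX_0,\cO_\cX(n\epsilon_0))$ be surjective, and the target is already zero because, the dualizing sheaf of $\cX_0$ being trivial, it is dual to $\rH^0(\cX_0,\cO_{\cX_0}(-n\epsilon_0))=0$. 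So the term-by-term failure you worry about does not occur; for $n=0$ your $\mathrm{Tor}_1$ obstruction also vanishes since $\rH^1(\cX,\cO_\cX)$ is free of rank one.

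The gap is in your fallback. You justify $\rH^1(\cX,\cO_\cX(n\epsilon))=0$ for $n\gg0$ by asserting that $\epsilon$ is relatively ample on $\cX$. It is not, except when $\cX=\cW$: as recalled in the paragraph preceding the lemma, $\epsilon$ meets the special fiber $\cX_0$ only in the strict transform $\widetilde{\cW}_0$, so $\cO_\cX(\epsilon)$ has degree zero on every other irreducible component of $\cX_0$ --- indeed $\cW$ is precisely the contraction of those components, which is the failure of ampleness made concrete. Checking ampleness ``fiberwise'' therefore refutes rather than proves your claim whenever $\cX_0$ is reducible, and since Serre vanishing via relative ampleness is your only route to the high-degree vanishing, the argument as written collapses at that step. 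The vanishing itself is true and the proof is repairable: for instance $\cO_\cX(n\epsilon)=f^*\cO_\cW(n\epsilon)$ for the contraction $f:\cX\to\cW$, one has $f_*\cO_\cX=\cO_\cW$ and $R^1f_*\cO_\cX=0$ because the singularities of $\cW$ are rational, and $\epsilon$ \emph{is} ample on $\cW$; alternatively, use the fiberwise duality argument above, which gives the statement for all $n\ge1$ at once and avoids the high-degree reduction entirely.
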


\begin{proof}We have to show that the canonical map 
\[\rH^0(\cX, \cO_{\cX}(n\epsilon))_N \to \rH^0(\cX_N, \cO_{\cX}(n\epsilon)_N)\] 
is an isomorphism for all $n\ge 0$. By standard arguments
(\cite{Liu}, Theorem 5.3.20), it is 
enough to show that $\rH^1(\cX, \cO_{\cX}(n\epsilon))_0\to
\rH^1(\cX_0, \cO_{\cX}(n\epsilon_0))$ is surjective. By duality, the
latter is isomorphic to $\rH^0(\cX_0, \cO_{\cX_0}(-n\epsilon_0))=0$ 
and we are done. 
\end{proof}

\begin{proposition}\label{X_N-W_N} Let 
$\cW'$, $\cX'$ be the 
minimal Weierstrass (resp. minimal regular) model of $E_L$ over $\cO_L$. 
Let $N\ge 0$. Then $(\cW'_N, G)$ is determined by 
$(\cX'_N, G)$. 
\end{proposition}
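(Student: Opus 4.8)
The plan is to mimic for $E_L$ over $\cO_L$ the argument used for $E$ over $\cO_K$: express the minimal Weierstrass model $\cW'$ intrinsically from the minimal regular model $\cX'$ via the graded ring of sections of $\cO_{\cX'}(n\epsilon')$, and then show that this construction, including the $G$-action, only depends on the truncation $\cX'_N$. Concretely, $\cO_L$ is a semilocal Dedekind domain, so the discussion of \S\ref{w2r} and Lemma~\ref{W_n} applies verbatim with $\cO_K$ replaced by $\cO_L$ (the statements there are about proper flat schemes over a discrete valuation ring, and one works prime by prime, or simply notes that the cohomological vanishing $\rH^0(\cX'_0, \cO_{\cX'_0}(-n\epsilon'_0))=0$ holds on each component of the special fiber). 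Thus one gets
\[
\cW'_N \simeq \Proj\Big(\bigoplus_{n\ge 0}\rH^0(\cX'_N, \cO_{\cX'}(n\epsilon'_N))\Big),
\]
and this isomorphism is canonical, hence $G$-equivariant, since $G$ acts compatibly on $\cX'$, on $\epsilon'$, and on all the sheaves $\cO_{\cX'}(n\epsilon')$.

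First I would record that $\cX'\to\cW'$ is the minimal desingularization over $\cO_L$, that $G$ acts on both by uniqueness of these models, and that $\epsilon'$ lands in the smooth (hence regular) locus so that $\cX'\to\cW'$ is an isomorphism near $\epsilon'$; this lets us regard $\epsilon'$ as a divisor on $\cX'$ as well. Then I would prove the analogue of Lemma~\ref{W_n} over $\cO_L$: the natural map $\rH^0(\cX', \cO_{\cX'}(n\epsilon'))_N \to \rH^0(\cX'_N, \cO_{\cX'}(n\epsilon')_N)$ is an isomorphism because $\rH^1(\cX'_0,\cO_{\cX'}(n\epsilon'_0))\to$ its own further reduction is controlled by the vanishing of $\rH^0(\cX'_0,\cO_{\cX'_0}(-n\epsilon'_0))$ (each geometric component of $\cX'_0$ is a curve on which $-n\epsilon'_0$ has negative degree on the single component meeting $\epsilon'$ and degree $0$ on the others, so there are no global sections). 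Combined with $\cW'=\Proj(\oplus_n\rH^0(\cX',\cO_{\cX'}(n\epsilon')))$ from \eqref{W}, this gives the displayed formula for $\cW'_N$.

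Finally I would unwind what "determined by $(\cX'_N,G)$" means: given another datum $E_o/K_o$, $L_o/K_o$ with the same group $G$ and $G$-equivariant isomorphisms $\cO_{K,N}\simeq\cO_{K_o,N}$, $\cO_{L,N}\simeq\cO_{L_o,N}$, $\cX'_N\simeq\cX'_{o,N}$ sending $\epsilon'_N$ to $\epsilon'_{o,N}$, I pull back the sheaves $\cO_{\cX'}(n\epsilon')_N$ and their cohomology along this isomorphism, obtaining $G$-equivariant isomorphisms $\rH^0(\cX'_N,\cO_{\cX'}(n\epsilon'_N))\simeq\rH^0(\cX'_{o,N},\cO_{\cX'_o}(n\epsilon'_{o,N}))$ compatible with the graded ring structure; applying $\Proj$ and the formula above yields a $G$-equivariant isomorphism $\cW'_N\simeq\cW'_{o,N}$ carrying $\epsilon'_N$ to $\epsilon'_{o,N}$, which is exactly the assertion. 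The only delicate point, and the one I would spend care on, is checking that Lemma~\ref{W_n} really does go through over the semilocal base $\cO_L$ rather than a genuine discrete valuation ring: the statements in \cite{Liu} quoted there are for a discrete valuation ring, so one either localizes at each maximal ideal of $\cO_L$ (the truncations $\cX'_N$ and $\cW'_N$ decompose as disjoint unions over the maximal ideals, and $G$ permutes the factors), or one verifies directly that the cohomology-and-base-change argument is insensitive to the base being semilocal Dedekind rather than local. Everything else is formal.
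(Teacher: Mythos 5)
Your argument is correct and follows essentially the same route as the paper: the paper's proof likewise applies the isomorphism $\cW\simeq\Proj(\oplus_{n\ge 0}\rH^0(\cX,\cO_\cX(n\epsilon)))$ and Lemma~\ref{W_n} to the models $\cW'$, $\cX'$ over $\cO_L$, with $G$-equivariance coming from the $G$-invariance of $\epsilon'$. Your extra care about the semilocal base $\cO_L$ (localizing at each maximal ideal, or checking the cohomology-and-base-change step directly) is a point the paper passes over silently, and is a reasonable thing to make explicit.
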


\begin{proof} It is enough to apply the isomorphism (\ref{W}) and the 
previous lemma to the models $\cW'$ and $\cX'$ of $E_L$. Note that 
the isomorphism of \ref{W_n} is compatible with the action of $G$
because $\epsilon$ is invariant by $G$. 
\end{proof}

\begin{theorem}\label{2.5.4} Let $K$ be a discrete valuation field
with perfect residue field. Let $E$ be an elliptic curve over $K$ 
of minimal discriminant $\Delta$. Let $L/K$ be a Galois extension of 
group $G$, of different $\D_{L/K}$.  Then for any $N\ge 0$, the
scheme $\cX_{N}$ is determined by $(\cX'_{N+\ell}, G)$, where 
\[\ell=2v_{K}(\Delta)+12[v_K(\D_{L/K})]+18.\]
\end{theorem}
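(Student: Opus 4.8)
The plan is to assemble Theorem~\ref{2.5.4} directly from the three intermediate results already established, chaining the "$\cdots$ is determined by $\cdots$" relations and carefully bookkeeping the loss of levels at each step. Concretely, I would set $\ell_2 = 2t+1$ where $t$ is the number of blowing-ups needed to resolve $\cW$, and $\ell_1 = 12[v_K(\D_{L/K})]+19$ (the constant from Theorem~\ref{2.3.2}), so that the total loss is $\ell = \ell_1 + \ell_2$. The first step is Proposition~\ref{X_N-W_N}: a $G$-equivariant isomorphism $\cX'_{N+\ell}\simeq \cX'_{o,N+\ell}$ (compatible with $\cO_{L,N+\ell}\simeq\cO_{L_o,N+\ell}$ and $\cO_{K,N+\ell}\simeq\cO_{K_o,N+\ell}$ and sending $\epsilon'$ to $\epsilon'_o$) induces, via the $\Proj$-description (\ref{W}) of $\cW'$ out of $\cX'$ and Lemma~\ref{W_n}, a compatible $G$-equivariant isomorphism $\cW'_{N+\ell}\simeq \cW'_{o,N+\ell}$ with $\epsilon'_{N+\ell}\mapsto \epsilon'_{o,N+\ell}$. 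At this point I have exactly the hypothesis (Iso$_{N+\ell}$) of \S\ref{minimal-w_n}.

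Next I invoke Theorem~\ref{2.3.2}: since $N+\ell = (N+\ell_2) + \ell_1 \ge (N+\ell_2) + 12[v_K(\D_{L/K})]+19$, the $G$-action on $\cW'_{N+\ell}$ determines $\cW_{N+\ell_2}$, i.e.\ we get an isomorphism $\cW_{N+\ell_2}\simeq \cW_{o,N+\ell_2}$ compatible with $\cO_{K,N+\ell_2}\simeq\cO_{K_o,N+\ell_2}$. Then I apply Corollary~\ref{2.5.3}(1): with $\ell_2 = 2t+1$, $\cW_{N+\ell_2}$ determines $\cX_N$, giving the desired isomorphism $\cX_N\simeq \cX_{o,N}$ compatible with $\cO_{K,N+1}\simeq\cO_{K_o,N+1}$ (hence with $\cO_{K,N}\simeq\cO_{K_o,N}$). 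The only arithmetic left is to check the numerology: by Corollary~\ref{2.5.3}(3), $t\le v_K(\Delta)-1$ unless $E$ has good reduction, so $\ell_2 = 2t+1 \le 2v_K(\Delta)-1$; adding $\ell_1 = 12[v_K(\D_{L/K})]+19$ gives $\ell \le 2v_K(\Delta) + 12[v_K(\D_{L/K})] + 18$, exactly the claimed bound. In the good reduction case $t=0$, so $\ell_2 = 1$ and the bound holds a fortiori (one could even take $\ell = 12[v_K(\D_{L/K})]+19$); alternatively, good reduction is detected already at low level so there is nothing to resolve.

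One technical point I would be careful about is that Corollary~\ref{2.5.3} and Theorem~\ref{blup} are stated in the language of "$\cY_{N+\ell}$ determines $(\wt\cY)_N$", meaning an isomorphism upstairs of $\cO_K$-quotients induces one downstairs; I need to make sure the isomorphism of $\cO_K$-algebra quotients produced by Theorem~\ref{2.3.2} feeds correctly into that machinery, and in particular that the pointings (the closure of the neutral element) are carried along throughout — this is automatic since $\cX\to\cW$ is an isomorphism near $\epsilon$, as noted at the start of \S\ref{cng}, and $\epsilon$ is $G$-invariant so Proposition~\ref{X_N-W_N} respects it. The main obstacle in the whole argument is not in this final assembly — which is essentially bookkeeping — but lies entirely in Theorem~\ref{2.3.2} (step (2)), the passage from the $G$-action on $\cW'$ to $\cW$ itself, which is where the different of $L/K$ enters and where Speiser's lemma, Proposition~\ref{different}, and the cohomological bounds of Proposition~\ref{2.1.6} do the real work. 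Here I simply cite it. Thus the proof of Theorem~\ref{2.5.4} reduces to concatenating Proposition~\ref{X_N-W_N}, Theorem~\ref{2.3.2}, and Corollary~\ref{2.5.3}, with the level losses $\ell_1,\ell_2$ adding up to $\ell$ as above.
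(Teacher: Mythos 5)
Your proposal is correct and follows exactly the same route as the paper's own proof, which likewise concatenates Proposition~\ref{X_N-W_N}, Theorem~\ref{2.3.2} (dropping from level $N+\ell$ to $N+2v_K(\Delta)-1$), and Corollary~\ref{2.5.3} (using $t\le v_K(\Delta)-1$), with the same arithmetic of level losses. Your extra remarks on the good-reduction edge case and on carrying the pointings through are sound and only make explicit what the paper leaves implicit.
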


\begin{proof} By Proposition \ref{X_N-W_N}, $(\cX'_{N+\ell}, G)$ determines
$(\cW'_{N+\ell}, G)$. Theorem \ref{2.3.2} says that the latter determines
$\cW_{N+2{v_K(\Delta)}-1}$. Finally Corollary~\ref{2.5.3} 
implies that $\cX_N$ is determined by the previous data. 
\end{proof}

\begin{corollary}
Suppose $K$ is strictly henselian of mixed characteristics $(0, p)$.
Let $L/K$ be the minimal extension such that $E_L$ has
semi-stable reduction. Then the special fiber 
$\cX_{0}$ is determined by the $G$-action on $\cX'_{\ell_{0}}$ for 
some $\ell_{0}\ge 0$ depending only on  the absolute ramification index $v_K(p)$ of $K$.
\end{corollary}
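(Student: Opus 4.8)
The plan is to apply Theorem~\ref{2.5.4} with $N=0$ (which gives that $\cX_0$ is determined by $(\cX'_\ell,G)$ with $\ell=2v_K(\Delta)+12[v_K(\D_{L/K})]+18$) and then to bound $\ell$, or bypass it, in terms of $v_K(p)$ alone. The term coming from the different is easy to control: since $L$ is the minimal extension over which $E$ becomes semi-stable, $[L:K]$ divides $24$ (as recalled in the proof of Theorem~\ref{2.3.4}), and because $\car(K)=0$, Remark~\ref{bound-diff}(2) gives $[v_K(\D_{L/K})]\le v_K(\D_{L/K})\le |G|v_K(p)/(p-1)\le 24v_K(p)$, a quantity depending only on $v_K(p)$. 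So it remains to bound $v_K(\Delta)=v_K(\Delta(\cW))$, and here I distinguish cases according to the reduction of $E$.

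If $G=\{1\}$, i.e.\ $E$ is already semi-stable over $K$, then $L=K$ and $\cX'=\cX$, so $\cX_0$ is obtained from $\cX'_{\ell_0}$ by reduction modulo $\pi$ and $\ell_0=0$ suffices. If $E$ is additive with potentially good reduction, then $E$ has good reduction over $L$, so Corollary~\ref{pot-g} bounds $v_K(\Delta)$ by $12v_K(\D_{L/K})+12$, which by the previous paragraph is a function of $v_K(p)$; substituting into the formula for $\ell$ produces an admissible $\ell_0=\ell_0(v_K(p))$.

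The remaining, and only delicate, case is additive potentially multiplicative reduction, where $\cX_0$ has Kodaira type $\mathrm I_n^*$ with $n=-v_K(j)\ge 1$, so $v_K(\Delta)=n+6$ is \emph{not} bounded in terms of $v_K(p)$ and Theorem~\ref{2.5.4} gives no uniform constant. Here I argue that no infinitesimal data beyond level $0$ is actually needed. Over $L$, which is a ramified quadratic extension, $E_L$ acquires split multiplicative reduction of type $\mathrm I_{2n}$, so $\cX'_0$ is a cycle of $2n$ projective lines; since $\cX'_{\ell_0}$ has the same underlying topological space as $\cX'_0$, the number of irreducible components, hence $n$, is recoverable from $\cX'_{\ell_0}$ for every $\ell_0\ge 0$. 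Consequently, if $(\cX'_{\ell_0},G)\simeq(\cX'_{o,\ell_0},G)$ compatibly with an isomorphism $\cO_{K,\ell_0}\simeq\cO_{K_o,\ell_0}$, then $G\ne\{1\}$ forces $E_o$ to be additive, $\cX'_{o,0}$ having more than one component forces $E_{o,L_o}$ to be multiplicative and hence $E_o$ to be potentially multiplicative, and matching the numbers of components forces $n_o=n$; therefore $\cX_{o,0}$ is again of type $\mathrm I_n^*$, and is isomorphic to $\cX_0$ over the common algebraically closed residue field. So $\ell_0=0$ works here too.

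Taking $\ell_0$ to be the maximum of the constants obtained in the three cases gives a bound depending only on $v_K(p)$, which proves the corollary. I expect the potentially multiplicative case to be the crux: the estimate from the main theorem degenerates over the family of curves of type $\mathrm I_n^*$, and one has to observe that precisely in that range the global combinatorial type of the special fiber of the semi-stable model $\cX'$ already determines the reduction type of $E$, so that the non-uniformity of the bound is harmless.
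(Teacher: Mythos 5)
Your case division and your treatment of the potentially good reduction case are essentially the paper's: bound $v_K(\D_{L/K})$ by a function of $v_K(p)$ using $[L:K]\mid 24$ and Remark~\ref{bound-diff}, bound $v_K(\Delta)$ by Corollary~\ref{pot-g}, and feed both into Theorem~\ref{2.5.4}. The semi-stable case is also fine.

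The gap is in the potentially multiplicative case, and it is genuine. You assert that $E$ has type $\mathrm{I}_n^*$ with $n=-v_K(j)$, so that $n$ --- read off from the number of components of the cycle $\cX'_0$ --- already determines $\cX_0$, whence $\ell_0=0$ suffices. This is only correct when the quadratic extension $L/K$ is tamely ramified, i.e.\ when $p\neq 2$. For $p=2$ the extension can be wildly ramified, and the paper invokes Lorenzini (\cite{Lor}, Theorem 2.8): the type is $\mathrm{I}^*_{n+4s}$ with $s=v_L(\D_{L/K})-1$, which can be strictly positive. Two curves $E$, $E_o$ with the same $n$ but different values of $v_L(\D_{L/K})$ then have non-isomorphic special fibers, while one can still have $\cX'_0\simeq \cX'_{o,0}$ and a $G$-equivariant $\cO_{L,0}\simeq\cO_{L_o,0}$ (the residue ring modulo $\pi$ only detects the ramification index, not the different). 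So $\ell_0=0$ fails; one must take $\ell_0$ large enough for Lemma~\ref{disc} to recover $v_L(\D_{L/K})$ from $\cO_{L,\ell_0}$, and the paper takes $\ell_0=v_L(\D_{L/K})\le 4v_K(p)/(p-1)$, which still depends only on $v_K(p)$, so the corollary survives --- but by the paper's argument, not yours. Two smaller points: your identity $v_K(\Delta)=n+6$ likewise presupposes tameness (Ogg's formula gives $f+m-1$ with $f>2$ in the wild case), and the step ``same type $\mathrm{I}_r^*$ implies isomorphic special fibers'' is not automatic; the paper cites \cite{Lu}, Theorem 5.18, valid for $r>0$.
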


\begin{proof}If $E$ has potentially multiplicative reduction, 
D. Lorenzini (\cite{Lor}, Theorem 2.8) showed
that $[L:K]\le 2$, $E$ has reduction type $\mathrm{I}_{n+4s}^*$  where 
$n=-v_K(j(E))>0$ and $s=v_L(\D_{L/K})-1\ge 0$. The curve of type
$\mathrm{I}_{r}^*$ is unique up to isomorphism for each $r>0$ 
(\cite{Lu}, Theorem 5.18). 
Hence, using Lemma~\ref{disc}, $\cX_{0}$ is determined by 
$\cX_{\ell_0}'$ with $\ell_0=v_{L}(\D_{L/K})\leq 4v_K(p)/(p-1)$ 
(Remark~\ref{bound-diff}). 

If $E$ has potentially good reduction, then 
$v_{K}(\Delta)\le 12(v_K(\D_{L/K})+1)$ with $[L:K]$ dividing $24$,
hence $v_K(\D_{L/K})\le 24v_K(p)/(p-1)$. 
Then we conclude with Theorem~\ref{2.5.4}. 
\end{proof}

Next we give some inverse results of Theorem~\ref{2.3.2} 
and Theorem~\ref{2.5.4}.

\begin{proposition}\label{W-W'} Let $N\ge v_K(\Delta)$. Then $\cW_N$ determines 
$(\cW'_N, G)$ for any finite Galois extension $L/K$.   
\end{proposition}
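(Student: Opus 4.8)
The plan is to pass from $\cW$ to its base change $\cW_L:=\cW\times_{\cO_K}\cO_L$, which is a Weierstrass model of $E_L$ over $\cO_L$ carrying the semilinear $G$-action induced by the $\cO_K$-coefficients, and to recover $\cW'$ as the minimalization of $\cW_L$. By Lemma~\ref{2.2.2}(3) (or Corollary~\ref{bound-c}(1)) applied over $\cO_L$, there is a transformation $(u,r,s,t)\in\cO_L^4$ carrying a Weierstrass basis of $\cW'$ to one of $\cW_L$, with $\Delta(\cW_L)=u^{12}\Delta(\cW')$; hence
\[12\,v_L(u)=v_L(\Delta(\cW_L))-v_L(\Delta(\cW'))=e_{L/K}v_K(\Delta)-v_L(\Delta(\cW'))\le e_{L/K}v_K(\Delta).\]
In particular $v_L(u)\le\tfrac1{12}e_{L/K}v_K(\Delta)\le\tfrac1{12}e_{L/K}N$, and since $N\ge v_K(\Delta)$ we get $v_L(\Delta(\cW_L))=e_{L/K}v_K(\Delta)\le e_{L/K}N<(N+1)e_{L/K}$, so $v_L(\Delta(\cW_L))$ is already read off from $(\cW_L)_N$. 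Moreover the $G$-action on $\cW'$ is conjugate, via $(u,r,s,t)$, to the (coefficient-induced) $G$-action on $\cW_L$: for $\sigma\in G$ the induced automorphism of the Weierstrass data of $\cW'$ is $\sigma(u,r,s,t)\circ(u,r,s,t)^{-1}$, whose $u$-part $\sigma(u)/u$ is a unit (reducing to the case $\cO_L$ local, as in the proof of Proposition~\ref{different}(2), so that $v_L$ is $G$-stable).

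The key point to establish is that $(\cW'_N,G)$ is determined by $(\cW_L)_N$ with its $G$-action — and the latter is determined by $\cW_N$ together with the given isomorphism $\cO_{L,N}\simeq\cO_{L_o,N}$. This is where $N\ge v_K(\Delta)$ enters. First, because $v_L(\Delta(\cW_L))<(N+1)e_{L/K}$, the whole minimalization process for $\cW_L$ (Tate's algorithm, equivalently the Laska--Kraus recipe on $c_4,c_6$) only inspects $c_4(\cW_L),c_6(\cW_L)$ and the successively transformed invariants to precision at most $\approx 6v_L(u)+O(1)\le\tfrac12 e_{L/K}N+O(1)<(N+1)e_{L/K}$; hence $v_L(u)$ and a transformation $(u,r,s,t)$ realizing it are already determined by $(\cW_L)_N$. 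Second, one checks that replacing $\cW_L$ by anything congruent to it mod $\pi^{N+1}$ does not change $(\cW'_N,G)$ up to isomorphism: writing the coefficients of $\cW'$ as $a_i'=(\text{polynomial in the }a_j\text{ and }r,s,t)/u^{i}$ with $i\le 6$, a perturbation of the $a_j$ of order $\ge N+1$ produces a perturbation of each $a_i'$ of order $\ge N+1-i\,v_L(u)\ge N+1-6v_L(u)\ge 1$, and such a perturbation of a minimal Weierstrass equation is absorbed by an $\cO_{L,N}$-coordinate change (a direct Weierstrass coordinate-change computation: $u$ may be perturbed freely at order $\ge 1$ and affects the $a_i'$ only at order $\ge 1$, together with the smoothness of $\cW'$ along $\epsilon'$); one then checks the cocycle $\sigma(u,r,s,t)(u,r,s,t)^{-1}$ is unchanged mod $\pi^{N+1}$ as well.

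Granting the key point, the proposition follows formally: an isomorphism $\cW_N\simeq\cW_{o,N}$ compatible with $\cO_{K,N}\simeq\cO_{K_o,N}$ and with a $G$-equivariant $\cO_{L,N}\simeq\cO_{L_o,N}$ base-changes to a $G$-equivariant isomorphism $(\cW_L)_N\simeq((\cW_o)_{\cO_{L_o}})_N$, whence a $G$-equivariant isomorphism $\cW'_N\simeq\cW'_{o,N}$ over $\cO_{L,N}\simeq\cO_{L_o,N}$. The main obstacle is the second half of the key point — the ``stability of minimalization mod $\pi^{N+1}$'' — since one only recovers $\cW'_N$ up to isomorphism, not on the nose: one must carry out the precise bookkeeping of exactly which order-$\ge 1$ perturbations of a minimal Weierstrass equation are killed by an $\cO_{L,N}$-coordinate change, and this is what makes the threshold $N\ge v_K(\Delta)$ (rather than something in terms of $\D_{L/K}$) the natural one, the examples of \S\ref{cte} and \S\ref{w2r} showing it cannot be lowered much.
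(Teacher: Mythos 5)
Your overall strategy is the same as the paper's: write the coordinate change $x=u^2x'+r$, $y=u^3y'+u^2sx'+t$ relating $\cW\otimes_{\cO_K}\cO_L$ to $\cW'$, use $\Delta(\cW)=u^{12}\Delta(\cW')$ together with $N\ge v_K(\Delta)$ to bound $v_L(u)$ by $e_{L/K}v_K(\Delta)/12$, and transport the change of variables through the given congruences. (The paper does this by lifting $(u,r,s,t)$ through $\theta_N$ and checking, via the divisibility relations of Deligne's formulaire, that the transported triple is a Weierstrass basis of a model over $\cO_{L_o}$, minimal by a discriminant symmetry; you phrase it instead as ``minimalization of $\cW_L$'', which amounts to the same thing.) The genuine gap is precisely at the point you yourself flag as ``the main obstacle'': after dividing by $u^i$, the coefficient $a_i'$ of the minimal model is only pinned down modulo $\pi^{N+1}u^{-i}$, a loss of up to $6v_L(u)$ digits, and your claim that the resulting order-$\ge 1$ perturbation of a minimal Weierstrass equation ``is absorbed by an $\cO_{L,N}$-coordinate change'' is false. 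Coordinate changes form a four-parameter group acting on five coefficients and preserve $j=c_4^3/\Delta$, whereas an arbitrary perturbation of $a_6'$ by an element of $\pi^{N+1}u^{-6}\cO_L$ moves $j$ modulo $\pi^{N+1}$ as soon as $v_L(u)>0$ and $\Delta'$ is a unit; such a perturbation cannot be absorbed.

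To see that the difficulty is real and not mere bookkeeping, take $p\ge 5$, $E: y^2=x^3+a\pi^2x+\pi^3$ with $4a^3+27\in\cO_K^*$ (type $\mathrm{I}_0^*$, $v_K(\Delta)=6$) and $E_o: y^2=x^3+a\pi^2x+\pi^3+\pi^{N+1}c$ with $c\in\cO_K^*$: these have identical $\cW_N$ and acquire good reduction over the same $L=K(\sqrt{\pi})$, but their minimal models over $\cO_L$ are $y'^2=x'^3+ax'+1$ and $y'^2=x'^3+ax'+1+\pi^{N-2}c$, whose $j$-invariants differ at order $\pi^{N-2}$, hence differ in $\cO_{L,N}$. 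So the ``stability of minimalization mod $\pi^{N+1}$'' you invoke does not hold at the stated precision; one can only hope to determine $\cW'_{N'}$ for $N'$ roughly $N-v_K(\Delta)/2$, or one needs a genuinely different argument. (The paper's own final step --- asserting an isomorphism $\cW'_N\to\cW'_{o,N}$ sending $\bar x'$ to $\bar x_o'$ --- confronts the same division-by-$u^i$ loss of precision, so your instinct about where the real work lies is sound, but the fix you propose does not close the gap.)
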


\begin{proof} Let $K_o, E_o$ and $L_o/K_o$ be as at the beginning 
of \S \ref{minimal-w_n} and suppose we have isomorphisms 
\[ \cO_{K, N}\simeq \cO_{K_o, N}, \quad 
\cW_N \to \cW_{o,N},\quad 
\theta_N : \cO_{L, N}\simeq \cO_{L_o, N},\] 
the last one being $G$-equivariant. We have to find
a $G$-equivariant isomorphism $\cW'_N\to \cW'_{o,N}$. 

Let $\{1, x, y\}$ (resp. $\{1, x', y'\}$) be a 
Weierstrass basis of $\cW$ (resp. of $\cW'$). 
By Lemma \ref{2.2.2}, we have a
change of coordinates of $E_{L}$:
\[x=u^2x'+r, \quad y=u^3y' + u^2s x'+t, \quad u, r, s, t\in \cO_L.\] 
Let $\phi_N : L(6\epsilon)_N\simeq L(6\epsilon_{o})_N$ be the isomorphism
induced by $\cW_{o,N}\simeq \cW_N$. Let $\{1, x_o, y_o\}$ be a 
Weierstrass basis of $\cW_o$ lifting the image by $\phi_N$ 
of the class of $\{1, x, y\}$ in $L(6\epsilon)_N$. 
Let $u_o, r_o, s_o, t_o\in \cO_{L_o}$ be liftings of the images by
$\theta_N$ of the classes $\bar{u}, \bar{r}, \bar{s}, \bar{t}\in\cO_{L,N}$. Let 
\[x_{o}{\kern -2pt}'=(x_{o}-r_{o})/u_o^2, \quad 
y_{o}{\kern -2pt}'=(y_{o}-u_{o}^2s_{o}x_{o}{\kern -2pt}'-t_{o})/u_o^3\in L(6\epsilon'_o)\otimes
L_o.\]
We claim that $\{1, x'_{o}, y'_{o}\}$ is a Weierstrass basis of 
$\cW'_o$. 
First, the fact that $\{1, x, y\}$ defines a Weierstrass model 
\[ y^2+a_1xy+a_3y=x^3+a_2x^2+a_4x+a_6\] 
over $\cO_L$ implies that 
\[ u \mid a_1+2s, \quad u^2 \mid a_2-sa_1+3r-s^2, ... , \quad 
 u^6 \mid a_6+ra_4+ra_r^3-ta_3-t^2-rta_1\] 
(see \cite{Deligne-T}, page 57, (1.6)). As 
\[6v_L(u)=(v_L(\Delta)-v_L(\Delta'))/2\le v_L(\Delta)/2\le v_L(\pi^{N}),\]  
the above divisibility relations hold in $L(6\epsilon'_o)$. 
Therefore $\{1, x'_o, y'_o\}$ is a Weierstrass basis of some 
Weierstrass model over $\cO_{L_o}$. In particular
\[ v_L(\Delta')=v_L(\Delta)-12v_L(u) =v_{L_o}(\Delta_o)-
12v_{L_o}(u_0) \ge v_L(\Delta'_o).\] 
But by symmetry, $v_L(\Delta'_o)\ge v_L(\Delta')$, so 
the equality holds and the Weierstrass model associated to 
$\{1, x'_o, y'_o\}$ is minimal over $\cO_{L_o}$. 

As the change of variables from $\{1, x, y\}$ to 
$\{1, x', y'\}$ and from $\{1, x_o, y_o\}$ to 
$\{1, x_o', y_o'\}$ are given by 
the same relations modulo $\pi^{N+1}$
(up to $\theta_N$), and $\{1, x, y\}$, $\{1, x_o, y_o\}$ 
define the same equation up to $\cO_{K,N}\simeq \cO_{K_o, N}$, 
we have an isomorphism $\cW'_N\to \cW'_{o,N}$ corresponding 
to $\bar{x}'\to \bar{x}_o{\kern -3pt}'$ and 
$\bar{y}'\to \bar{y}_o{\kern -3pt}'$. 
\end{proof}

\begin{proposition}\label{2.6.3} Let $\Delta$ be the minimal discriminant 
of $E$. Then for any $N\ge 0$, $(\cX'_{N}, G)$ 
is determined by  $\cX_{N+2v_L(\Delta)-1}$. 
\end{proposition}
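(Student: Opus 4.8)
The plan is to run the argument of Proposition~\ref{W-W'} ``in reverse'' and then combine it with the passage from minimal Weierstrass models to minimal regular models that was established in \S\ref{w2r} and \S\ref{cng}. Concretely, I would first show that $\cX_{N+2v_L(\Delta)-1}$ determines $(\cW_{N'}, \text{trivial action})$ for a suitable $N'$, and then show that $\cW_{N'}$ (together with the extension data) determines $(\cW'_{N}, G)$, and finally that $(\cW'_N, G)$ determines $(\cX'_N, G)$. The last implication is not Proposition~\ref{X_N-W_N} (which goes the other way); rather it is the statement that a minimal regular model is recovered from its minimal Weierstrass model by a canonical sequence of blow-ups of singular loci (Remark~\ref{rational-sing} and Theorem~\ref{blup}), and that this process is $G$-equivariant because $G$ acts on $\cW'$ and preserves singular loci, so by Lemma~\ref{Y1-sing} it is detected at each finite level. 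Thus the combinatorially delicate input is really the relation between $\cW$ and $\cW'$, i.e. an analogue of Proposition~\ref{W-W'}.

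The first step, recovering $\cW_{N'}$ from $\cX_{N+2v_L(\Delta)-1}$, is the mirror image of Lemma~\ref{W_n} and Proposition~\ref{X_N-W_N}: using the canonical isomorphism $\cW\simeq \Proj(\oplus_n \rH^0(\cX,\cO_\cX(n\epsilon)))$ of (\ref{W}) and the base-change compatibility of Lemma~\ref{W_n}, an isomorphism $\cX_M\simeq \cX_{o,M}$ (compatible with $\cO_{K,M}\simeq \cO_{K_o,M}$) induces $\cW_M\simeq \cW_{o,M}$; no loss of level occurs here since the cohomology computation is exact. So I may take $\cW_{N+2v_L(\Delta)-1}\simeq \cW_{o,N+2v_L(\Delta)-1}$ as given, and the whole content is to push this down to $\cO_L$, losing only $2v_L(\Delta)-1$ levels (over $L$) — but note the statement is phrased with $v_L(\Delta)$, the valuation of the minimal discriminant of $E$ normalized by $v_L$, which over $L$ reads as $v_L(\Delta)=e_{L/K}v_K(\Delta)$ if $\Delta$ means the minimal discriminant of $E$ over $K$; I would need to be careful that the ``$N$'' in $(\cX'_N,G)$ is the level over $\cO_K$ (i.e. with respect to $\pi$, not $\pi_L$) as fixed in the paper's convention, so the bound $N+2v_L(\Delta)-1$ is indeed a level over $\cO_K$ and $2v_L(\Delta)$ absorbs the base-change discrepancy $12 v_L(u)$ where $\cW'=$ pullback of $\cW$ twisted by $u$, with $12 v_L(u)=v_L(\Delta)-v_L(\Delta')\le v_L(\Delta)$.

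The core step — producing a $G$-equivariant isomorphism $\cW'_N\to \cW'_{o,N}$ from $\cW_{N'}\simeq \cW_{o,N'}$ — proceeds exactly as in Proposition~\ref{W-W'}: pick a Weierstrass basis $\{1,x,y\}$ of $\cW$ and the change of coordinates $x=u^2x'+r$, $y=u^3y'+u^2sx'+t$ with $u,r,s,t\in\cO_L$ realizing the minimal Weierstrass basis $\{1,x',y'\}$ of $\cW'$; transport $\{1,x,y\}$ via the given isomorphism to a Weierstrass basis $\{1,x_o,y_o\}$ of $\cW_o$, lift $u,r,s,t$ through $\theta_N$ to $u_o,r_o,s_o,t_o\in\cO_{L_o}$, set $x_o'=(x_o-r_o)/u_o^2$ and $y_o'=(y_o-u_o^2s_ox_o'-t_o)/u_o^3$, and check that $\{1,x_o',y_o'\}$ is a \emph{minimal} Weierstrass basis of $\cW'_o$ using the divisibility relations of \cite{Deligne-T}, (1.6), together with $6v_L(u)\le v_L(\Delta)/2\le v_L(\pi^{N+2v_L(\Delta)-1})$ — hence the $N'=N+2v_L(\Delta)-1$ in the statement — and the symmetry argument $v_L(\Delta'_o)=v_L(\Delta')$. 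The new ingredient compared to Proposition~\ref{W-W'} is that everything in sight is $G$-equivariant: $G$ acts on $\cW'$ and $\cW'_o$ semilinearly (Lemma~\ref{2.2.6}), and the whole construction of $x_o',y_o'$ is performed using only $\theta_N$-transported data from the $G$-equivariant side, so the resulting isomorphism $\cW'_N\to\cW'_{o,N}$, sending $\bar x'\mapsto\bar x_o'$, $\bar y'\mapsto\bar y_o'$, automatically commutes with $G$ because the $G$-action on a Weierstrass model is determined by its action on the coordinate functions, which is transported faithfully through $\theta_N$ at level $N$. I expect the main obstacle to be purely bookkeeping: tracking the normalized valuations over $L$ versus over $K$ carefully enough to see that the stated bound $N+2v_L(\Delta)-1$ really suffices for all the divisibility estimates, and confirming — via Lemma~\ref{iso-mod-N}(2), which needs level $\ge 5$ — that minimality of $\cW'_o$ over $\cO_{L_o}$ is genuinely detected at the level we retain rather than merely over the generic fiber; once minimality and the Weierstrass-basis property hold, equivariance is formal.
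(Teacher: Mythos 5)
Your three-step decomposition $\cX_{N+\ell}\Rightarrow\cW\Rightarrow(\cW',G)\Rightarrow(\cX',G)$ is exactly the paper's route, but your accounting of where the $\ell=2v_L(\Delta)-1$ levels are consumed is wrong, and as a result your final step fails as stated. You assert that $(\cW'_N,G)$ determines $(\cX'_N,G)$ at the \emph{same} level, citing Remark~\ref{rational-sing} and Theorem~\ref{blup}; but Theorem~\ref{blup} and Corollary~\ref{2.5.3} say precisely the opposite: each blow-up in the resolution $\cX'\to\cW'$ costs levels (two per blow-up plus one at the end over $\cO_K$, in the situation of Corollary~\ref{2.5.3}), so one needs $(\cW'_{N+2t'+1},G)$ with $t'$ the number of blow-ups to recover $(\cX'_N,G)$. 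Detecting the singular locus via Lemma~\ref{Y1-sing} at each finite level does not make the passage lossless — the strict transform is cut out by killing $\pi$-torsion, and that torsion is only controlled after discarding $c$ levels (Proposition~\ref{constant-c}(3)). This desingularization step is where the whole budget $2v_L(\Delta)-1$ is actually spent, via Ogg--Saito applied over $\cO_L$: $t'+1$ is at most the number of components of $\cX'_0$, so $t'\le v_L(\Delta')-1\le v_L(\Delta)-1$.

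Symmetrically, you spend the padding in the step $\cW_{N'}\Rightarrow(\cW'_N,G)$, where none is needed: Proposition~\ref{W-W'} is \emph{lossless} — the inequality $6v_L(u)\le v_L(\Delta)/2\le v_L(\pi^N)$ is only a threshold condition ($N\ge v_K(\Delta)$, automatically satisfied here), and its output is a $G$-equivariant isomorphism $\cW'_{N'}\simeq\cW'_{o,N'}$ at the \emph{full} level $N'$, not at level $N'-(2v_L(\Delta)-1)$. The correct bookkeeping is therefore: $\cX_{N+\ell}$ determines $\cW_{N+\ell}$ (Proposition~\ref{X_N-W_N} with $L=K$, no loss), which determines $(\cW'_{N+\ell},G)$ (Proposition~\ref{W-W'}, no loss), which determines $(\cX'_N,G)$ by the $G$-equivariant version of Corollary~\ref{2.5.3}, consuming $\ell\ge 2t'+1$. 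Your remarks on $G$-equivariance of the blow-up sequence and on the $v_K$ versus $v_L$ normalization are pertinent, but the argument cannot close until the level loss is moved to the step that actually requires it.
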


\begin{proof} Let $\ell=2v_K(\Delta)-1$. First, by Proposition \ref{X_N-W_N} (for $L=K$), 
$\cX_{N+\ell}$ determines $\cW_{N+\ell}$. Second, $\cW_{N+\ell}$ 
determines $(\cW'_{N+\ell}, G)$ by Proposition~\ref{W-W'}. Finally, the
latter determines $(\cX'_{N}, G)$ by similar arguments than
Corollary~\ref{2.5.3} (note that $v_L(\Delta)$ is bigger than 
or equal to the $v_L$ valuation of the minimal discriminant of $E_L$).
\end{proof}

\begin{proposition} \label{lift-min} Suppose 
$\mathrm{char}(K)=p>0$.
Let $N\ge 0$. Then there exists a discrete valuation field $K_{o}$  
of characteristic 0, with residue field equal to $k$, 
and an elliptic curve $E_{o}$ over $K_{o}$ such that 
\[\cO_{K_o, N}\simeq \cO_{K, N}, \quad 
\cW_{o, N}\simeq \cW_{N}, \quad 
\cX_{o, N}\simeq \cX_{N}.\]
\end{proposition}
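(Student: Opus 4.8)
The plan is to lift the whole situation from equal characteristic $p$ to mixed characteristic $(0,p)$ by means of Witt vectors and a totally ramified Eisenstein extension, and then to import the passage from Weierstrass to regular models from \S\ref{w2r}. Since $\cO_{K,N}$, $\cW_N$ and $\cX_N$ depend only on $\cO_K/\pi^{N+1}\cO_K$, and since both $\cW$ and $\cX$ are compatible with the completion $\cO_K\to\who_K$ (for $\cW$ because the discriminant valuation is unchanged, for $\cX$ because by Remark~\ref{rational-sing} one desingularizes by blowing up the same closed points), I would first replace $\cO_K$ by its completion and so assume $\cO_K$ complete. As $k$ is perfect, Cohen's structure theorem then gives $\cO_K\simeq k[[t]]$ with $t=\pi$, hence $\cO_K/\pi^M\cO_K\simeq k[t]/(t^M)$ for every $M\ge 1$. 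I would fix $M:=N+2v_K(\Delta)+6$ once and for all.

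\emph{Construction of the base ring.} Let $W(k)$ be the Witt ring of $k$, a complete discrete valuation ring of mixed characteristic $(0,p)$ with residue field $k$ and uniformizer $p$, and set $\cO_{K_o}:=W(k)[t]/(t^M-p)$. Since $t^M-p$ is Eisenstein over $W(k)$, this is a complete discrete valuation ring of characteristic $0$, totally ramified of degree $M$ over $W(k)$, with residue field $k$ and uniformizer $\pi_o:=t$. As $p\equiv 0\pmod{t^M}$, reduction modulo $t^M$ yields a ring isomorphism
\[\cO_{K_o}/\pi_o^M\cO_{K_o}\ \simeq\ k[t]/(t^M)\ \simeq\ \cO_K/\pi^M\cO_K\]
carrying $\pi_o$ to $\pi$; in particular $\cO_{K_o,N}\simeq\cO_{K,N}$, the first of the three required isomorphisms.

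\emph{Lifting the curve.} I would fix a minimal Weierstrass equation of $E$ with coefficients $a_i\in\cO_K$, with associated model $\cW\subset\mathbb P^2_{\cO_K}$, and choose lifts $\tilde a_i\in\cO_{K_o}$ of the images of the $a_i$ under the isomorphism just constructed. Let $E_o/K_o$ be the elliptic curve defined by the Weierstrass equation with coefficients $\tilde a_i$, and let $\cW_o\subset\mathbb P^2_{\cO_{K_o}}$ be the corresponding Weierstrass model; its discriminant is congruent to $\Delta$ modulo $\pi_o^M$ and $v_K(\Delta)<M$, so it is nonzero and $E_o$ is genuinely an elliptic curve. The two equations coincide modulo $\pi^M$, so one obtains a compatible isomorphism $\cW_{o,M-1}\simeq\cW_{M-1}$, hence $\cW_{o,N}\simeq\cW_N$. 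Because $M-1\ge 5$, Lemma~\ref{iso-mod-N}(2) applies and shows that $\cW_o$ is the \emph{minimal} Weierstrass model of $E_o$ over $\cO_{K_o}$; this gives the second required isomorphism.

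\emph{Passing to the regular model, and the main obstacle.} Let $t_E$ be the number of blowing-ups relating $\cX$ and $\cW$ as in Corollary~\ref{2.5.3}; then $t_E\le v_K(\Delta)-1$ if $E$ has bad reduction, and $\cX=\cW$, $t_E=0$ otherwise. In either case $M-1=N+2v_K(\Delta)+5\ge N+2t_E+1$, so the isomorphisms $\cW_{o,M-1}\simeq\cW_{M-1}$ and $\cO_{K_o,M-1}\simeq\cO_{K,M-1}$, combined with the minimality of $\cW_o$, fall within the scope of Corollary~\ref{2.5.3}; it then delivers the last isomorphism $\cX_{o,N}\simeq\cX_N$, compatible with $\cO_{K_o,N}\simeq\cO_{K,N}$. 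I expect the only genuinely delicate point to be exactly this minimality of $\cW_o$: it rests on the fact that Tate's algorithm detects non-minimality already modulo $\pi^6$, which is why the coefficients must be lifted to the high level $M$ rather than merely to level $N$, and why $M$ is chosen so as to simultaneously feed Corollary~\ref{2.5.3}. Everything else is a routine assembly of Cohen's structure theorem, the elementary Eisenstein construction of $\cO_{K_o}$, and the already-proved Corollary~\ref{2.5.3}.
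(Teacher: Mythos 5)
Your proof is correct and follows essentially the same route as the paper: construct $\cO_{K_o}=W(k)[t]/(t^M-p)$ via an Eisenstein polynomial so that $\cO_{K_o}/\pi_o^M\simeq \cO_K/\pi^M$, lift the minimal Weierstrass equation to that level, invoke Lemma~\ref{iso-mod-N}(2) (with $M-1\ge 5$) to get minimality of $\cW_o$, and conclude with Corollary~\ref{2.5.3}. The only difference is cosmetic: the paper takes the slightly smaller level $n\ge\max\{5,\,N+2v_K(\Delta)-1\}$, while you take $M-1=N+2v_K(\Delta)+5$, which is equally sufficient.
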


\begin{proof} Let $n\ge \max\{5, N+2{v_K(\Delta)}-1\}$  
so that $\cX_N$ is determined by $\cW_{n}$ 
(Corollary~\ref{2.5.3}). Let $\cO_{K_{o}}=W(k)[t]/(t^{n+1}-p)$
with uniformizing element $\pi_o=t$. Then 
$\cO_{K_{o},n}\simeq \cO_{K,n}$. 
Lifting $\cW_{n}$ to a Weierstrass equation $\cW_{o}$ over
$\cO_{K_o}$, then we have $v_{K_{o}}(\Delta_{o})=v_{K}(\Delta)$ and
$\cW_{o}$ is minimal by Lemma~\ref{iso-mod-N}(2). 
By construction, $\cW_n\simeq \cW_{o, n}$ (hence $\cW_N\simeq \cW_{o, N}$). 
Again by Corollary~\ref{2.5.3}, we have an isomorphism
$\cX_{N}\simeq \cX_{o, N}$.
\end{proof}

\end{section}

\begin{section}{Lifting equivariant infinitesimal sections} 
\label{G-invariant} 

In our settings, Weierstrass models come with a fixed section. 
But in Proposition \ref{choice-of-o}, we saw that up to isomorphism,
the choice of a section does not really matter. 
We can wonder whether in Theorem \ref{2.3.2} we can dismiss the
given section of $\cW'_m$. Note that, at least in our proof,  we
use the fact that this section of $\cW'_m$ is $G$-equivariant and 
even more, that it extends to a section of $\cW'$ over $\cO_L$ 
induced by a rational point in $E(K)$. Now suppose we are given 
(Iso$_m$) as at the beginning of \S 5 but without the condition 
that the isomorphism $\cW'_m\simeq \cW'_{o,m}$ maps $\epsilon'_m$ 
to $\epsilon'_{o,m}$. The image 
of $\epsilon'_m$ is a $G$-equivariant section of $\cW'_{o,m}$ contained in
the smooth locus of $\cW'_{o,m}$. If for some $m_1\le m$, the image
of $\epsilon'_{m_1}$ in $\cW'_{o, m_1}$ 
extends to a section $Q$ of $\cW'_o$ induced by a rational point
of $q\in E_o(K_o)$ (equivalently, $Q$ is a $G$-equivariant
section of $\cW'_o$), then by Proposition \ref{choice-of-o} we have 
a $G$-equivariant isomorphism $\cW'_{m_1}\simeq \cW'_{o,m_1}$ which
maps $\epsilon'_{m_1}$ to $\epsilon'_{o,m_1}$ and we can apply Theorem \ref{2.3.2}
with $m_1$ instead of $m$. 
See Corollary \ref{lift-equivariant-sections} for some results on $m_1$.  

Let $S$ be a scheme. Let $f: X'\to S'$ be a morphism of $S$-schemes 
and let $H$ be a group acting on the $S$-schemes $X'$, $S'$ 
compatibly with $f$ (in other words, $f$ is $H$-equivariant). 
Then $H$ acts on the set of sections 
$X'(S')$ in the following way: for any section $\rho : S'\to X'$ 
and for any $\sigma\in H$, we put 
$\sigma\star \rho = \sigma\circ \rho \circ \sigma^{-1}\in X'(S')$. 
A section $\rho$ is said \emph{$H$-equivariant} if 
$\sigma\star\rho=\rho$ for all $\sigma\in H$. The set of 
$H$-equivariant sections will be denoted by $X'(S')^H$. 
The above question is to study the image of the canonical map 
\[\cW'(\cO_L)^G\to \cW'(\cO_L/\pi^{m+1}\cO_L)^G.\] 
Suppose from now on that $S'\to S$ is finite and locally free and 
$X'\to S'$ is quasi-projective. Then the Weil restriction $R_{S'/S}X'$ 
exists (\cite{BLR}, Theorem 7.6/4) over $S$ and is endowed
with a canonical action of $H$. Moreover, for any 
$S$-scheme $T$, letting $H$ act trivially on $T$ and
denoting $Y=R_{S'/S}X'$, 
$X'(S'\times_S T)^H$ is canonically isomorphic to 
$Y(T)^H$. Suppose further that $H$ is finite.
Let $Y^H$ be the scheme of fixed points under 
$H$ (see e.g. \cite{Bas}, \S 3). Then by definition 
$Y(T)^H=Y^H(T)$. 
\medskip

Let $S=\Spec\cO_K$, $S'=\Spec\cO_L$,
$S_m=\Spec(\cO_K/\pi^{m+1}\cO_K)$  and $S'_m=S'\times_S S_m$. 

\begin{proposition}\label{lift-equiv} Let $\cZ'$ be a 
flat quasi-projective scheme over $S'$ endowed with an equivariant
action of $G=\mathrm{Gal}(L/K)$. 
Then the following properties hold. 
\begin{enumerate}[{\rm (1)}]  
\item Let $\cZ=\cZ'/G$. Then the canonical map $\cZ(S)\to \cZ'(S')^G$ 
is bijective. 
\item Suppose $S'\to S$ is \'etale. 
Then the canonical morphism $\cZ'\to\cZ\times_S S'$ is an isomorphism and 
the canonical morphism $\cZ\to (R_{S'/S}\cZ')^G$ is an
isomorphism. 
\item Suppose that $K$ is henselian, $\cZ'$ is smooth over $S'$ and 
$L/K$ is tamely ramified. Then the canonical map 
\[\cZ'(S')^G\to \cZ'(S'_m)^G\] 
is surjective for all $m\ge 0$. 
\item Suppose that $K$ is henselian and that $\cZ'_L$ is smooth over $L$.
Then there exist $m_0, r_0\ge 0$ such that for all $m\ge m_0$,
and for any $t_m\in \cZ'(S'_m)^G$, the image of 
$t_m$ in $\cZ'(S'_{m-r_0})^G$ lifts to a section in 
$\cZ'(S')^G$. 
\end{enumerate}
\end{proposition}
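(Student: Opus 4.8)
The plan is to treat the four parts in order; (1) and (2) are formal, (3) reduces to a cohomological vanishing, and (4) is an application of Greenberg's theorem. For (1) I would reduce to the affine case: the quotient $\cZ=\cZ'/G$ exists because $\cZ'$ is quasi-projective over the affine scheme $S'$ and $G$ is finite, $\cZ'\to\cZ$ is finite, and $\cO_L^G=\cO_K$ identifies $S$ with $S'/G$. Covering $\cZ$ by affine opens with $G$-stable preimages $\Spec B'\subseteq\cZ'$ and setting $B=B'^G$, a $G$-equivariant section $S'\to\cZ'$ is a $G$-equivariant $\cO_L$-algebra map $B'\to\cO_L$, whose restriction to invariants is an $\cO_K$-algebra map $B\to\cO_K$, i.e. a point of $\cZ(S)$; that this is a bijection is the statement that $\cZ'/G$ represents $T\mapsto\cZ'(T\times_S S')^G$ on $S$-schemes (using that $\cZ'/G$ commutes with flat base change and $(T\times_S S')/G=T$), evaluated at $T=S$. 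For (2), when $S'\to S$ is \'etale the morphism $\Spec\cO_L\to\Spec\cO_K$ is an \'etale $G$-torsor, so $\cZ'\simeq\cZ\times_S S'$ by Galois descent; for any $S$-scheme $T$ the morphism $T\times_S S'\to T$ is again an \'etale $G$-torsor, whence $\cZ(T)=\cZ(T\times_S S')^G=\cZ'(T\times_S S')^G=(R_{S'/S}\cZ')^G(T)$, so $\cZ\simeq(R_{S'/S}\cZ')^G$.

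For (3), recall $\cO_L$ is finite free over $\cO_K$, so $S'\to S$ is finite and faithfully flat; put $Y=R_{S'/S}\cZ'$, which is smooth and quasi-projective over $S$, carries a $G$-action, and has a fixed-point scheme $Y^G$ that is closed in $Y$ and of finite type over $\cO_K$. By the Weil-restriction formalism recalled above, $\cZ'(S'_m)^G=Y^G(S_m)$ and $\cZ'(S')^G=Y^G(S)$, so since $K$ is henselian it suffices to prove $Y^G$ is smooth over $\cO_K$, which I would do via the infinitesimal lifting criterion. Given a square-zero extension $A'\twoheadrightarrow A$ of $\cO_K$-algebras with kernel $\mathfrak a$ and a point $\phi\in Y^G(A)=\cZ'(\cO_L\otimes_{\cO_K}A)^G$, flatness of $\cO_L/\cO_K$ makes $\cO_L\otimes A'\twoheadrightarrow\cO_L\otimes A$ square-zero, and smoothness of $\cZ'/\cO_L$ makes the lifts of $\phi$ in $Y(A')$ a non-empty torsor under the semi-linear $\cO_L[G]$-module $\phi^*\mathcal T\otimes_{\cO_L}(\cO_L\otimes_{\cO_K}\mathfrak a)$, with $\mathcal T$ the relative tangent sheaf, the obstruction to an equivariant lift lying in $\rH^1(G,-)$ of this module. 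The key input is that $\rH^1(G,M)=0$ for every semi-linear $\cO_L[G]$-module $M$ when $L/K$ is tamely ramified: for $M/M_{\mathrm{tors}}$ this is Proposition~\ref{2.1.6}(2) together with $[v_K(\D_{L/K})]=0$, and for the $\pi$-torsion graded pieces of $M_{\mathrm{tors}}$ one uses the inflation--restriction sequence for the inertia group $I$ --- $\rH^1(I,-)$ annihilates a $\pi$-torsion module because $|I|=e_{L/K}$ is prime to the residue characteristic, while $\rH^1(G/I,-)$ vanishes on semi-linear $\cO_{L^I}[G/I]$-modules since $L^I/K$ is unramified. Hence $Y^G$ is formally smooth, so smooth, and $Y^G(S)\to Y^G(S_m)$ is surjective for every $m$, which is (3).

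For (4) I would keep $Y=R_{S'/S}\cZ'$ and $Y^G$; now $\cZ'_L$ smooth over $L$ forces $Y_L=R_{L/K}\cZ'_L$, hence $Y^G_L$, to have smooth (in particular reduced) generic fibre, and again $\cZ'(S'_m)^G=Y^G(S_m)$, $\cZ'(S')^G=Y^G(S)$. The claim is then Greenberg's approximation theorem applied to the finite-type $\cO_K$-scheme $Y^G$, in its effective additive-loss form: because $Y^G$ has smooth generic fibre, a $G$-equivariant N\'eron smoothening of $\cZ'$ absorbs the defect with a loss $r_0$ independent of $m$, so there are $m_0,r_0\ge 0$ such that for $m\ge m_0$ every point of $Y^G(S_m)$ becomes, after truncation to $S_{m-r_0}$, the reduction of a point over $\widehat{\cO_K}$, hence --- $\cO_K$ being henselian --- of a point of $Y^G(S)$; translating back through the identifications above gives (4).

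The part I expect to be the main obstacle is (4): one has to know that the smooth-generic-fibre hypothesis upgrades the naive (multiplicative) Greenberg bound to a \emph{uniform} additive loss $r_0$ independent of $m$, which is precisely where the effective form of N\'eron smoothening is needed. By contrast (3) is comparatively soft once Proposition~\ref{2.1.6} and the tame vanishing of $\rH^1(G,-)$ are in hand, and (1)--(2) are formal.
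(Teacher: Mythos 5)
There are two genuine gaps, plus one part where your route differs from the paper's in an interesting and essentially workable way.

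The most serious problem is in (1). Your bijectivity rests on the claim that $\cZ'/G$ represents the functor $T\mapsto \cZ'(T\times_S S')^G$ on $S$-schemes. That claim is false for wildly ramified $L/K$ --- indeed its failure is the phenomenon this whole paper is about: take $\cZ'=\mathbb A^1_{S'}$ with $G$ acting only through $\cO_L$ and $T=S_m$; then $(\cZ'/G)(T)=\cO_{K,m}$ while $\cZ'(T\times_S S')^G=(\cO_{L,m})^G$, and these differ whenever $\rH^1(G,\cO_L)[\pi^{m+1}]\ne 0$ (Proposition~\ref{2.1.4}). For flat $T$ the representability statement just reduces back to the case $T=S$, so your argument is circular there. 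What is actually needed for surjectivity of $\cZ'(S')^G\to\cZ(S)$ is a lifting mechanism: the paper passes to generic fibres, where Speiser's lemma gives $\cZ'_L\simeq(\cZ_K)_L$ and hence $\cZ_K(K)=\cZ'_L(L)^G$, and then extends the resulting $L$-point to an $\cO_L$-point using the valuative criterion for the finite morphism $\cZ'\to\cZ$ (uniqueness of the extension gives equivariance). Some such integrality/properness argument is unavoidable; restricting algebra maps to invariants only produces the map in one direction. A second gap occurs in (4): the inference ``$Y_L$ smooth, hence $Y_L^G$ smooth'' is unjustified, since fixed points of a finite group acting on a smooth scheme need not be smooth when the order of the group is not invertible (exactly the wild case at issue). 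The conclusion is nonetheless true, and the paper's fix is the right one: apply part (2) to the \'etale extension $\Spec L\to\Spec K$ to identify $(\cY_K)^G$ with $\cZ_K$, which is smooth by faithfully flat descent from $\cZ'_L$. Note also that the paper invokes Elkik's theorem directly over the henselian ring $\cO_K$; your detour through $\widehat{\cO_K}$ and back requires an approximation property that a general henselian discrete valuation ring need not have.

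For (3) your approach is genuinely different from the paper's and is the more self-contained of the two. The paper forms $\cZ^t=R_{S'/S^t}\cZ'$ over the maximal unramified subextension, takes $I$-fixed points (smooth by Edixhoven's theorem on tame fixed points), and quotients by $G/I$ using part (2); henselianity plus smoothness of the resulting $\cO_K$-scheme then gives surjectivity. You instead prove formal smoothness of $\cY^G$ by deformation theory, with the obstruction to an equivariant lift living in $\rH^1(G,M)$ for a semi-linear $\cO_L[G]$-module $M$. This works, but be aware that the module of derivations occurring here is not flat over $\cO_L$, so Proposition~\ref{2.1.6} does not apply to it directly; your d\'evissage into the flat quotient and the $\pi$-torsion graded pieces is the right patch, and in the unramified quotient step the vanishing of $\rH^1(G/I,-)$ on \emph{arbitrary} semi-linear modules needs full Galois descent for the \'etale torsor $\Spec\cO_{L^I}\to\Spec\cO_K$ (every semi-linear module is induced, hence cohomologically trivial by Shapiro), not merely the flat case treated in \S\ref{semi-linear}. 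With those details supplied, your (3) is a valid alternative that trades the citation of Edixhoven for a cohomological computation.
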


\begin{proof} (1) First notice that the quotient $\cZ'/G$ exists
because $\cZ'$ is quasi-projective over $\cO_L$. The canonical morphism 
$\cZ'_L\to (\cZ_K)_L$ is an isomorphism by Lemma~\ref{speiser}. 
The canonical map 
\[\cZ'(\cO_L)^G\to \cZ(\cO_K)\subseteq \cZ_K(K)\] 
is injective. Conversely, any section in $\cZ(\cO_K)$ induces
a rational point in $\cZ_K(K)=\cZ'_L(L)^G\subseteq \cZ'_L(L)$. 
The valuative criterion of properness for $\cZ'\to \cZ$ 
implies that the point in $\cZ'_L(L)$ we obtain actually belongs 
to $\cZ'(\cO_L)\cap \cZ'_L(L)^G=\cZ'(\cO_L)^G$. Therefore
$\cZ'(\cO_L)^G\to \cZ(\cO_K)$ is surjective. 

(2) The canonical morphism $\cZ'\to \cZ\times_S S'$ is an isomorphism
by Proposition~\ref{different}. For any $\cO_K$-module $M$ with trivial
action of $G$, the canonical map $M\to (M\otimes_{\cO_K}\cO_L)^G$ is
an isomorphism (use a normal basis of $\cO_L/\cO_K$).  
For any $S$-scheme $T$, the canonical map 
\[\cZ(T)\to R_{S'/S}\cZ'(T)^G=(\cZ'(T\times_S S'))^G=(\cZ(T\times_S S'))^G
=\cZ(T)\] 
is bijective. So $\cZ\to (R_{S'/S}\cZ')^G$ is an isomorphism. 

(3) Let $\cY=R_{S'/S}\cZ'$. We saw above that $\cZ'(S')^G\to \cZ'(S'_m)^G$
can be identified with the canonical map 
\[\cY^G(S)\to \cY^G(S_m).\] 
Let $I\subset G$ be the inertia group, let $L_1=L^I$, $H=G/I$ and let
$S^t=S'/I$. Denote by $\cZ^t=R_{S'/S^t}\cZ'$. It is smooth 
over $S^t$ (\cite{BLR}, Proposition 7.6/5) as well as 
$\cZ_1:=(\cZ^t)^I$ (\cite{Bas}, Proposition 3.4). Let 
$T$ be an $S$-scheme with trivial action of $G$. 
Then 
\[\cZ'(T\times_S S')^G=(\cZ'(T\times_S S')^I)^{H}
=(\cZ_1(T\times_S S^t))^{H}.\] 
Let $\cZ_2=\cZ_1/H$. By (2), $\cZ_2$ is smooth over $S$ and 
$(\cZ_1(T\times_S S^t))^{H}=\cZ_2(T)$. Thus
$\cZ'(T\times_S S')^G=\cZ_2(T)$. As $\cO_K$ is henselian and $\cZ_2$ is
smooth, $\cZ_2(S)\to \cZ_2(S_m)$ is surjective and (3) is proved. 

(4) Applying (2) to $\Spec L\to \Spec K$, we see that $\cZ_K$ is smooth 
over $K$ and $(\cY^G)_K=(\cY_K)^G=\cZ_K$. Our statement then results 
from Elkik's approximation theorem (\cite{Elkik}, Corollaire 1, page 567)
and the identity $\cY^G(T)=\cZ'(T\times_S S')^G$ for all $S$-schemes $T$. 
\end{proof}

\begin{remark} Keep the notation of Proposition~\ref{lift-equiv}. 
\begin{enumerate}
\item 
If $K$ is henselian, $L/K$ is tamely ramified and $\cZ'$ is smooth, 
it is probably true that the canonical map
\[\cZ'(\cO_L)^G\to \cZ'(\Spec (\cO_L/\pi_L^{m+1}\cO_L))^G\] 
is surjective for all $m\ge 0$. Note that the right-hand side is not
$\cZ'(S'_{m})^G$. 
\item The constants $m_0, r_0$ in \ref{lift-equiv}(4) depend on the 
scheme $\cZ'$. When the latter is smooth over $S'$, 
it is probably true that one can find bounds $m_0, r_0$ depending 
only on $v_K(\D_{L/K})$.
\end{enumerate}
\end{remark} 

Next we give an explicit bound on the constants $m_0, r_0$ 
of \ref{lift-equiv} (4) for abelian varieties. 

\begin{proposition}\label{G-approx} Suppose $K$ is complete with 
$\car(K)=0$ and residue characteristic $p\ge 0$. 
Let $h=2[v_K(\D_{L/K})]$ and let 
\[m+1 > h+v_K(p)-1+\dfrac{v_K(p)}{p-1}.\]
($m\ge 0$ if $p=0$.) Let $A$ be an abelian variety over $K$ and 
let $\cA'$ be its N\'eron model over $\cO_L$. 
Then for any $G$-equivariant section
$t_m\in\cA'(\cO_L/\pi^{m+1}\cO_L)^G$
of $\cA'$, there exists a $G$-equivariant section in 
$\cA'(\cO_L)^G=A(K)$ whose image in 
$\cA'(\cO_L/\pi^{m+1-h}\cO_L)$ coincides with that of $t_m$.
\end{proposition}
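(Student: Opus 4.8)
The plan is to combine the $\cO_L$-module cohomology bounds of Section~\ref{semi-linear} with the formal group of the N\'eron model to lift an equivariant infinitesimal section to a genuine equivariant section. First I would reduce to working with the identity component $\cA'^0$ and its formal completion along the unit section. Write $\cF'=\widehat{\cA'^0}$ for the formal group; since $\car(K)=0$ and $\cA'^0$ is smooth of some relative dimension $g$, after choosing formal coordinates $\cF'(\cO_L/\pi^{n+1}\cO_L)$ is, as a pointed set, $(\pi\cO_L/\pi^{n+1}\cO_L)^g$, and the group law is a power series with $\cO_L$-coefficients. The key structural fact I would use is that the logarithm $\log_{\cF'}$ identifies $\cF'(\cO_L)$ (and its finite-level truncations, up to a bounded loss of precision controlled by $v_K(p)/(p-1)$, the bound on denominators of the logarithm coming from \cite{Serre}, IV.2, Exercise~3c in the form already invoked in Remark~\ref{bound-diff}(2)) with a submodule of $L^g$ that is a \emph{linear} $\cO_L[G]$-module. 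This linearization is what lets Section~\ref{semi-linear} apply.

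Next I would handle the non-smooth/component-group part. A $G$-equivariant section $t_m\in\cA'(\cO_L/\pi^{m+1}\cO_L)^G$ reduces on the special fiber to a $G$-fixed point of the component group $\Phi$ of $\cA'$; since $K$ is complete and $\cA'$ is a N\'eron model, I can lift this to a $G$-equivariant section $s\in\cA'(\cO_L)^G=A(K)$ at the level of components (the component group carries an action of $G$ and a $G$-fixed component contains a $G$-fixed point over the henselian, in fact complete, base — using that $\cA'^0$ is $G$-stable and smooth, so Hensel lifting can be made equivariant by averaging is not available in char $p$, but here $\car(K)=0$ and we only need the first-order obstruction, which I would kill by the cohomological bound). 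Subtracting $s$ reduces the problem to $t_m-s$ lying in the formal group $\cF'(\cO_L/\pi^{m+1}\cO_L)^G$, i.e. the statement becomes: the canonical map $\cF'(\cO_L)^G\to \cF'(\cO_L/\pi^{m+1}\cO_L)^G$ has image containing the image of $\cF'(\cO_L/\pi^{m+1}\cO_L)^G$ in $\cF'(\cO_L/\pi^{m+1-h}\cO_L)^G$, where $h=2[v_K(\D_{L/K})]$.

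At that point I would apply Proposition~\ref{2.1.4} directly. Taking $M=\cF'(\cO_L)\cong \log_{\cF'}(\cF'(\cO_L))\subseteq L^g$, which is a semi-linear $\cO_L[G]$-module flat over $\cO_L$ (after absorbing the bounded denominators of $\log$, which costs the extra $v_K(p)-1+v_K(p)/(p-1)$ in the hypothesis on $m$ — this is where the precise inequality $m+1>h+v_K(p)-1+v_K(p)/(p-1)$ enters), the truncations $M_n$ are exactly the formal-group points modulo $\pi^{n+1}$ up to that controlled shift, and Proposition~\ref{2.1.4} says $(M^G)_{m-h}$ surjects onto the image of $(M_m)^G$ for $m\ge (m-h)+h$. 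Re-exponentiating via $\exp_{\cF'}$ gives a $G$-equivariant point of $\cF'(\cO_L)$ agreeing with $t_m-s$ modulo $\pi^{m+1-h}$; adding back $s$ finishes the proof.

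The main obstacle, and the step I would be most careful about, is the passage between the multiplicative/non-linear formal group $\cF'$ and the additive linearization: one must check that $\log_{\cF'}$ and $\exp_{\cF'}$ converge on the relevant congruence subgroups, that the loss of $p$-adic precision is exactly bounded by $v_K(p)/(p-1)$ (together with the $v_K(p)-1$ coming from the leading coefficients / the difference between $\pi$-adic and $\pi_L$-adic filtrations), and that these maps are genuinely $G$-equivariant and $\cO_K$-linear on $G$-invariants, so that Proposition~\ref{2.1.4} — which is stated for honest semi-linear $\cO_L[G]$-modules flat over $\cO_L$ — is legitimately applicable. Everything else (the reduction via the component group, the cohomological vanishing via Proposition~\ref{2.1.6}) is then formal.
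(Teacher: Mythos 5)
Your overall strategy---use the formal group of $\cA'$, linearize it via the logarithm on congruence subgroups $\widehat{\cA'}(\pi^{\ell}\cO_L)\simeq(\pi^{\ell}\cO_L)^g$ for $\ell\ge v_K(p)/(p-1)$, and then kill the lifting obstruction using the exponent bound $h=2[v_K(\D_{L/K})]$ on $\rH^1(G,\cO_L)$ from Proposition~\ref{2.1.6}---is exactly the paper's, and you have correctly located where each term of the hypothesis on $m$ comes from. But there is a genuine gap in your preliminary reduction. You propose to first produce a global section $s\in\cA'(\cO_L)^G$ with the same reduction as $t_m$ on the special fiber, so that $t_m-s$ lands in the formal group, and only then apply the linearized cohomological argument. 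Producing such an $s$ means lifting a $G$-fixed point of $\cA'_0(k_L)$ to a $G$-fixed point of $\cA'(\cO_L)$; the obstruction to doing this $G$-equivariantly lives in $\rH^1(G,\widehat{\cA'}(\pi_L\cO_L))$, and at that shallow level the formal group is \emph{not} isomorphic to an $\cO_L$-lattice via the logarithm (it is not even torsion-free in general), so neither Proposition~\ref{2.1.4} nor \ref{2.1.6} applies to it. Your parenthetical justification (``we only need the first-order obstruction, which I would kill by the cohomological bound'') does not address this: the cohomological bound is only available for honest flat semi-linear $\cO_L[G]$-modules, which $\widehat{\cA'}(\pi_L\cO_L)$ is not. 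In effect this preliminary step is a level-zero instance of the very statement you are trying to prove, and it is not established.

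The paper's proof avoids the decomposition entirely. It applies $G$-cohomology directly to the exact sequence $0\to\widehat{\cA'}(\pi^{n}\cO_L)\to\cA'(\cO_L)\to\cA'(\cO_L/\pi^{n}\cO_L)\to 0$ (surjectivity on the right by smoothness of the N\'eron model and completeness), so the obstruction attached to $t_m$ is its image under the connecting map in $\rH^1(G,\widehat{\cA'}(\pi^{m+1}\cO_L))$---a group where the logarithm \emph{does} apply, since $m+1>v_K(p)/(p-1)$. One then only has to check that this class dies in $\rH^1(G,\widehat{\cA'}(\pi^{m+1}p^{-r}\cO_L))$ for $r=\lceil h/v_K(p)\rceil$: under the logarithm the transition map becomes multiplication by $p^r$ on $\rH^1(G,\cO_L)$, which vanishes by Proposition~\ref{2.1.6}(2). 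No auxiliary global section, and no contact with the component group or with the shallow part of the formal group, is ever needed. If you restructure your argument along these lines (replacing your Step with the connecting-homomorphism diagram, which is the same diagram as in the proof of Proposition~\ref{2.1.4}), the remaining ingredients of your proposal go through.
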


\begin{proof}Let $\widehat{\cA'}$ be the formal group over $\cO_L$
attached to $\cA'$. Let $r$ be the smallest integer $\ge h/v_K(p)$. 
For all integers $n> rv_K(p)\ge 0$, we have a 
canonical commutative diagram with exact horizontal lines 
\[\xymatrix{ 
  0  \ar[r] &  \widehat{\cA'}(\pi^n \cO_L) \ar[d]\ar[r] & 
\cA'(\cO_L)\ar[d]_{\mathrm{id}} \ar[r] &  \cA'(\cO_L/\pi^n\cO_L) 
\ar[d]\ar[r]&
  0  \\
  0  \ar[r] &  \widehat{\cA'}(\pi^{n}p^{-r} \cO_L) \ar[r] & 
\cA'(\cO_L)\ar[r] &  \cA'(\cO_L/\pi^{n}p^{-r}\cO_L) \ar[r]&
  0.  \\
}
\]
Taking Galois cohomology, we get 
\[\xymatrix{ 
\cA'(\cO_L)^G \ar[r]\ar[d]_{\mathrm{id}} &  
\cA'(\cO_L/\pi^{n}\cO_L)^G \ar[r]\ar[d] & 
\rH^1(G, \widehat{\cA'}(\pi^n \cO_L)) \ar[d]_{f_{n,r}}\\ 
\cA'(\cO_L)^G \ar[r] &  \cA'(\cO_L/\pi^{n}p^{-r}\cO_L)^G \ar[r] & 
\rH^1(G, \widehat{\cA'}(\pi^{n}p^{-r} \cO_L)). \\ 
}
\]
So it is enough to show that $f_{n,r}=0$ when $n>rv_K(p)+v_K(p)/(p-1)$. 
By general results on the formal groups of abelian varieties
(see \cite{LT}, \S 1, \cite{Honda}, Theorem 1, or 
\cite{Tate-p}, \S 2.4, p. 196),
$\widehat{\cA'}(\pi^{\ell} \cO_L)$ is canonically
isomorphic to $\pi^\ell\cO_L$ for all $\ell\ge v_K(p)/(p-1)$. 
So the canonical map $f_{n,r}$ can be identified with
the multiplication-by-$p^r$ map on $\rH^1(G, \cO_L)$. This is
the zero map by Proposition~\ref{2.1.6}(2), thus $f_{n,r}=0$. 
As $rv_K(p)\le h+v_K(p)-1$, the proposition is proved. 
\end{proof}

\begin{corollary}\label{lift-equivariant-sections}
Let $K, K_o$ be henselian discrete valuation fields. 
Let $E, E_o$,  $L, L_o, G$ and $\cW', \cW_o'$ be as at the
beginning of \S \ref{minimal-w_n}. 
\begin{enumerate}[{\rm (1)}] 
\item Let $\epsilon', \epsilon'_o$ be the unit sections of 
$\cW', \cW'_o$ respectively. Then there exists 
an integer $r$ such that for any $m\ge r$, if there are
compatible $G$-equivariant isomorphisms 
\[
\theta_{m} : \cO_{L, m} \simeq \cO_{L_o, m}, \quad 
f_m : \cW'_{m}\simeq \cW'_{o, m},\] 
then there exists a $G$-equivariant isomorphism 
$\cW'_{m-r}\simeq \cW'_{o, m-r}$, compatible with 
the isomorphism 
$\cO_{L, m-r}\simeq \cO_{L_o, m-r}$ and  which maps
$\epsilon'_{m-r}$ to $\epsilon'_{o, m-r}$.  
\item Suppose that $K$ is complete of characteristic $0$. Then
one can take $r=2[v_K(\D_{L/K})]$ 
provided $m$ satisfies the inequality of Proposition~\ref{G-approx}. 
\end{enumerate} 
\end{corollary}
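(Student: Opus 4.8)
The plan is to exploit that the hypothesised $G$-equivariant isomorphism $f_m\colon\cW'_m\simeq\cW'_{o,m}$ need not carry the unit section $\epsilon'_m$ to $\epsilon'_{o,m}$, and to kill this defect by a translation. Write $\cZ'_o\subseteq\cW'_o$ for the smooth locus of $\cW'_o$: it is an open, hence flat quasi-projective, $\cO_{L_o}$-subscheme stable under the $G$-action, with generic fibre $(E_o)_{L_o}$ smooth over $L_o$. Since $\epsilon'_m$ lies in the smooth locus of $\cW'_m$ and $f_m$ is an isomorphism over $\cO_{K_o,m}$ compatible with $\theta_m$, the section $t_m:=f_m(\epsilon'_m)$ is a $G$-equivariant section of $\cZ'_o$ over $S'_m$. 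Applying Proposition~\ref{lift-equiv}(4) to $\cZ'_o$ produces constants $m_0,r_0\ge 0$ such that, once $m\ge m_0$, the image of $t_m$ in $\cZ'_o(S'_{m-r_0})^G$ lifts to a $G$-equivariant section $Q\in\cZ'_o(\cO_{L_o})^G\subseteq\cW'_o(\cO_{L_o})^G$.

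Next I would recognise $Q$ as a rational point: its generic fibre is a $G$-fixed element of $(E_o)_{L_o}(L_o)$, hence lies in $E_o(K_o)$ by Galois descent, and $Q$ is its closure in the smooth locus. Translation by this point $q$ on $(E_o)_{L_o}$ is defined over $K_o$ and sends the origin to $q$; repeating the argument proving Proposition~\ref{choice-of-o}(2) — $Q$ lies in the smooth locus, $\cW'_o\setminus\cZ'_o$ has codimension $\ge 2$, and $\cW'_o$ is normal — it extends to a finite birational, hence (by normality) an isomorphism $\tau\colon\cW'_o\to\cW'_o$ over $\cO_{L_o}$ with $\tau(\epsilon'_o)=Q$, and $\tau$ is $G$-equivariant because $q\in E_o(K_o)$. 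Reducing $\tau$ modulo $\pi^{m-r_0+1}$ and composing, $\tau_{m-r_0}^{-1}\circ f_{m-r_0}\colon\cW'_{m-r_0}\to\cW'_{o,m-r_0}$ is a $G$-equivariant isomorphism, compatible with $\theta_{m-r_0}$, which carries $\epsilon'_{m-r_0}$ to $\epsilon'_{o,m-r_0}$ (note $Q_{m-r_0}=t_m\bmod\pi^{m-r_0+1}=f_{m-r_0}(\epsilon'_{m-r_0})$). Taking $r:=\max\{m_0,r_0\}$ and, if $r_0<r$, truncating one further step proves part~(1).

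For part~(2) I would run the same argument but replace Proposition~\ref{lift-equiv}(4) by the explicit Proposition~\ref{G-approx}, applied to the abelian variety $A=(E_o)_{L_o}$ whose N\'eron model over $\cO_{L_o}$ has $\cZ'_o$ as identity component. The hypotheses pass to $K_o$: for $m$ large, Lemma~\ref{disc} gives $v_{L_o}(\D_{L_o/K_o})=v_L(\D_{L/K})$ and $e_{L_o/K_o}=e_{L/K}$, while $\cO_{K,m}\simeq\cO_{K_o,m}$ forces $\car(K_o)=0$ and (if $p>0$) $v_{K_o}(p)=v_K(p)$, so the inequality of Proposition~\ref{G-approx} holds for $K_o$ exactly when it holds for $K$. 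That proposition then lifts $t_m$ to $Q\in\cW'_o(\cO_{L_o})^G=E_o(K_o)$ agreeing with $t_m$ at level $m-h$, where $h=2[v_K(\D_{L/K})]$; since $Q$ and $t_m$ have the same closed point, which lies in $\cZ'_o$, the lift lands in the identity component, and part~(1)'s argument goes through with $r=h$.

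The genuine analytic content — equivariant lifting and approximation of infinitesimal sections — is already isolated in Propositions~\ref{lift-equiv}(4) and~\ref{G-approx}, so within this corollary the delicate points are bookkeeping: checking that the produced section is $G$-equivariant on the nose (not merely modulo $\pi^{m+1}$) and lands in the smooth locus/identity component, and that translation by $q$ extends to a $G$-equivariant automorphism of the \emph{minimal} Weierstrass model. I expect the one genuinely error-prone step to be matching truncation levels — ensuring $\tau$ is reduced to exactly the level at which $Q$ was manufactured — so as to obtain the stated level drop ($r$ in general, $2[v_K(\D_{L/K})]$ in the complete characteristic-$0$ case).
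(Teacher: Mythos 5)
Your argument is essentially the paper's: transport the unit section through the given isomorphism to get a $G$-equivariant infinitesimal section, lift it to a genuine $G$-equivariant section of the Weierstrass model via Proposition~\ref{lift-equiv}(4) (resp.\ Proposition~\ref{G-approx} for the explicit bound), extend the resulting translation to a $G$-equivariant automorphism of the Weierstrass model as in Proposition~\ref{choice-of-o}, and compose to correct the defect. The one genuine difference is the direction: you push $\epsilon'_m$ forward and lift over $\cO_{L_o}$, whereas the paper pulls $\epsilon'_{o,m}$ back via $f_m^{-1}$ and lifts over $\cO_L$. This is harmless for part (1), but in part (2) it forces you to transfer the hypothesis ``$K$ complete of characteristic $0$'' to $K_o$: characteristic $0$ does follow for $m$ large from $\cO_{K,m}\simeq\cO_{K_o,m}$ as you note, but completeness of $K_o$ does \emph{not}, and Proposition~\ref{G-approx} is stated for complete fields. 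This is repairable (pass to the completion of $K_o$; only truncations are at stake in the conclusion), but the cleaner route is the paper's, which keeps the lifting on the $K$ side where completeness is assumed.
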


\begin{proof} The image of $\epsilon'_{o, m}$ by $f_m^{-1}$ is a
$G$-equivariant section of $\cW'_m$. Let $r$ be the maximum
of $m_0, r_0$ given in Proposition~\ref{lift-equiv}\ (4). 
Then $f_{m-r}^{-1}(\epsilon'_{o,m-r})\in \cW'_{m-r}(\cO_{L,m-r})$
lifts to a $G$-invariant section $a\in \cW'(\cO_L)$. Let
$t: \cW'\to \cW'$ be the translation by $a$. This is a 
$G$-equivariant isomorphism, and the composition 
$f_{m-r}\circ t_{m-r} : \cW'_{m-r}\to \cW'_{o, m-t}$ is an 
isomorphism compatible with $\cO_{L, m-r}\simeq \cO_{L_o, m-r}$,
and taking $\epsilon'_{m-r}$ to $\epsilon'_{o, m-r}$. 
This proves (1). To prove (2), it is enough to notice that
the smooth locus of $\cW'$ is the neutral component of 
$\mathcal E'$. As $\epsilon'_{o, m}$ is contained in the smooth
locus, we have 
$f_{m}^{-1}(\epsilon'_{o,m})\subset \mathcal \cE'_m(\cO_{L,m})$.
By \ref{G-approx}, $f_{m-r}^{-1}(\epsilon'_{o,m-r})$ lifts to a
section in $\cE'(\cO_L)^G\subseteq \cW'(\cO_L)^G$. The proof
is then achieved as in Part (1). 
\end{proof} 

\end{section}

\affiliationone{
Q. Liu\\
Universit\'e Bordeaux 1\\ 
Institut de Math\'ematiques de Bordeaux \\ 
CNRS, UMR 5251\\
F-33400 Talence,  France\\
\email{Qing.Liu@math.u-bordeaux1.fr}}
\affiliationtwo{
H. Lu\\
Academy of Math. and Systems Science\\
No. 55, Zhongguancun East Road\\ 
Beijing 100190, China
\email{{huajun@amss.ac.cn}}}

\end{document}